\documentclass[12pt,a4paper]{article}
\usepackage{fullpage, amsfonts, amsthm, amsmath, setspace, graphicx, amssymb, float, colonequals, hyperref, float,mathrsfs}
\usepackage{comment}
\usepackage[numbers,sort&compress]{natbib}

\usepackage{url}

\newtheorem{conjecture}{Conjecture}[section]
\newtheorem{theorem}{Theorem}[section]
\newtheorem{lemma}[theorem]{Lemma}

\newcommand{\ds}{\displaystyle}

\numberwithin{subcase}{case}
\numberwithin{subsubcase}{subcase}
\numberwithin{claim}{theorem}

\title{\bf Some Recent Results on the Geometry of Complex Polynomials: The Gauss--Lucas Theorem, Polynomial Lemniscates, Shape Analysis, and Conformal Equivalence.}

\author{Trevor J. Richards\thanks{Email: trichards@monmouthcollege.edu}\vspace{6mm}\\{\em Mathematics, Statistics, and Computer Science, Monmouth College}\\{\em Monmouth, IL United States}}

\begin{document}

\maketitle

\begin{center}
\textbf{Keywords:} polynomials, lemniscates, shape analysis, conformal equivalence, Gauss--Lucas Theorem
\end{center}

\begin{center}
\textbf{MSC2010:} 30C15
\end{center}

\begin{abstract}
In this article, we survey the the recent literature surrounding the geometry of complex polynomials. Specific areas surveyed are i) Generalizations of the Gauss--Lucas Theorem, ii) Geometry of Polynomials Level Sets, and iii) Shape Analysis and Conformal Equivalence.
\end{abstract}

\section{Introduction}
The geometry of complex polynomials has been an area of ongoing interest since the complex numbers were first conceived of geometrically. Foremost in the historical study of the geometry of complex polynomials has been the problem of finding the zeros (and critical points) of a given polynomial, or failing that, regions guaranteed to contain all (or some or none) of the zeros (or critical points) of the polynomial. The foundational result in this area is the Gauss--Lucas theorem, which states that the critical points of a complex polynomial lie in the convex hull of the zeros of that polynomial. In Section~\ref{sect: GL-related.}, we will survey results which are related to the Gauss--Lucas Theorem.

A natural generalization of the notion of a zero of a complex polynomial $p(z)$ is a lemniscate of $p(z)$. The lemniscates of $p(z)$ are the components of the level sets $$\Lambda_{\epsilon}(p)=\{z:|p(z)|=\epsilon\}$$ for any $\epsilon\in(0,\infty)$ (of course, if $\epsilon=0$ we have reproduced the zero set of $p$). The study of the geometry specifically of these lemniscates also has a long history, dating to the investigation of Cassini and Bernoulli (see~\cite{Yates} for example). In Section~\ref{sect: Geometry of Lemniscates.}, we will survey recent results regarding the geometry of lemniscates, both individually and viewed as a complex of nested curves (nested in the sense of one lying in a bounded component of the complement of another).

Hilbert's theorem, to the effect that the lemniscates of complex polynomials may be used to approximate simple closed curves arbitrarily well, has made these lemniscates a valuable tool in the emerging field of shape analysis. A shape $\Gamma$ is a simple closed path which i) is smooth and ii) divides $\hat{\mathbb{C}}$ into two simply connected domains, one bounded (called $\Omega_+$) and one unbounded (called $\Omega_-$). The fingerprint $\tau:\mathbb{T}\to\mathbb{T}$ of $\Gamma$ is the orientation preserving biholomorphism of the unit circle onto itself obtained by composing the appropriate (ie. subject to certain normalizations) Riemann maps for $\Omega_+$ and $\Omega_-$ in the appropriate way. In Section~\ref{sect: Fingerprints of Shapes and Conformal Equivalence.}, we will introduce these notions of shape and fingerprint properly, and survey recent results relating to the fingerprints of polynomial lemniscates.

In the special case that the shape $\Gamma$ is a proper lemniscate of a complex polynomial $p$ (that is, a lemniscate containing all of the zeros of the polynomial in its bounded face), the fingerprint of $\Gamma$ has a particularly nice form, since the Riemann map for the region $\Omega_+$ may be taken to be $p(z)^{1/n}$ (where $n$ is the degree of $p$). In~\cite{EbenfeltKhavinsonShapiro}, it was shown that the fingerprint of any such $\Gamma$ is the $n^{\text{th}}$ root of a degree-$n$ Blaschke product $B(z)$, and conversely that the $n^{\text{th}}$ root of any degree $n$ Blaschke product is the fingerprint for some proper polynomial lemniscate. From this it follows that for any finite Blaschke product $B$, there is some injective analytic map $\varphi:\mathbb{D}\to\mathbb{C}$, and some complex polynomial $p$ (with $\deg(p)=\deg(B)$) such that $B=p\circ\varphi$ on $\mathbb{D}$. In other words, $B$ is conformally equivalent to a complex polynomial (of the same degree as $B$) on $\mathbb{D}$. This fact has been re-proven by various methods by several authors, and generalized beyond the realm of finite Blaschke products. Also in Section~\ref{sect: Fingerprints of Shapes and Conformal Equivalence.}, we will survey recent results regarding conformal equivalence of arbitrary analytic functions to polynomials (and meromorphic functions to rational functions).

The results contained herein are largely restricted to those appearing in the last ten or so years. Many theorems mentioned below appear in articles with other interesting results not mentioned here. The subjects described above are chosen largely for their appeal to the author's interest, and many results have appeared in other areas related to the geometry of complex polynomials.



\section{Gauss--Lucas Related Theorems}
\label{sect: GL-related.}

There continue to be very many contributions the the classical study of the geometry of complex polynomials, which is chiefly concerned with the relations between the zeros, critical points, and coefficients of a complex polynomial. In this section, we will focus on generalizations of the Gauss--Lucas theorem.

\subsection{The Shrinking Hulls of the Zeros of the Derivatives}

If we let $H(p)$ denote the convex hull of the roots of a degree $n$ polynomial $p$, then the sequence $H(p)\supset H(p')\supset\cdots\supset H(p^{(n-1)})$ shrinks to a single point. In 2018, M.~Ravichandran~\cite{Ravichandran} quantified the rate at which this nested sequence shrinks with the following theorem.

\begin{theorem}
For any complex polynomial $p$ and any $r\in(1/2,1)$, $$m\left(H(p^{(\lceil r\deg(p)\rceil)})\right)\leq4(r-r^2)m(H(p)).$$
\end{theorem}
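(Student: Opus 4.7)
The plan is to combine the invariance of the root-centroid under differentiation with a quantitative shrinkage of a suitable spread functional of the roots. After a translation we may assume that the centroid of the roots of $p$ is at the origin; an immediate consequence of Vieta's formulas is that the root-centroid is preserved by differentiation, so every hull $H(p^{(k)})$ also contains $0$.

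The first key step is to establish the one-step variance inequality
\[
V(q') \;\le\; \frac{\deg q - 2}{\deg q - 1}\, V(q), \qquad V(q) := \frac{1}{\deg q}\sum_{i}|\zeta_i|^2,
\]
where $\zeta_i$ are the roots of $q$. A short calculation via Newton's identities proves this as an exact identity when $q$ is real-rooted; in the general complex case it is the de~Bruijn--Springer conjecture, now a theorem. Iterating $k$ times gives
\[
V(p^{(k)}) \;\le\; \frac{n-k-1}{n-1}\, V(p),
\]
and for $k = \lceil rn\rceil$ this is of order $(1-r)V(p)$.

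The second step is to convert the variance shrinkage into a bound on $m(H(\cdot))$. A Chebyshev-type estimate shows that the roots of $q$ (of degree $\deg q$) lie in $\overline{D}(0,\sqrt{\deg q}\,\sigma)$ when the centroid is $0$ and the variance is $\sigma^2$; combined with a matching lower bound for $m(H(p))$ in terms of $nV(p)$ achieved in the extremal configurations, this upgrades the variance shrinkage to a bound of the form $m(H(p^{(k)}))\le C(1-r)\,m(H(p))$.

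The hardest step will be extracting the \emph{precise} constant $4r(1-r) = 1 - (2r-1)^2$. The $r(1-r)$ shape (rather than a plain $(1-r)$ or $(1-r)^2$) and the explicit factor of $4$ strongly suggest that the extremal polynomials are those of the form $p(z) = (z-a)^{\lceil rn\rceil}(z-b)^{n-\lceil rn\rceil}$, for which the Rodrigues formula expresses $p^{(\lceil rn\rceil)}$ in terms of a classical Jacobi polynomial whose zeros, by well-known large-parameter asymptotics, concentrate in an interval of length $\sim 4r(1-r)|a-b|$. The core technical work is therefore a reduction argument that pushes the general case to these two-support extremal configurations, after which the Jacobi-polynomial zero estimates supply the precise constant.
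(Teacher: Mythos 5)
The paper itself offers no proof of this result---it is a survey item citing Ravichandran---so your proposal can only be measured against the known argument and its own internal logic, and on the latter count it has a fatal gap in the second step. There is no lower bound for $m(H(p))$ in terms of $nV(p)$: any polynomial with collinear roots has $m(H(p))=0$ while $V(p)$ is whatever you like, and even among nondegenerate configurations a very thin hull makes the ratio $m(H(p))/(nV(p))$ arbitrarily small. The variance is a rotation-invariant, essentially one-dimensional functional and simply does not control the two-dimensional quantity $m(H(\cdot))$ from below, so the variance shrinkage cannot be ``upgraded'' to an area inequality. Even setting that aside, the Chebyshev step $\max_i|\zeta_i|\leq\bigl(\sum_i|\zeta_i|^2\bigr)^{1/2}$ costs a factor of $\sqrt{\deg q}$, so the containing disk you obtain for $H(p^{(k)})$ has radius of order $(1-r)\sqrt{n}\,\diam H(p)$; the resulting bound degrades with the degree, whereas the theorem is uniform in $n$. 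Finally, the ``reduction to two-point configurations'' is asserted rather than performed, and for an \emph{area} inequality those configurations are degenerate (both sides vanish), so they cannot be the extremizers that pin down the constant---the $2\sqrt{r(1-r)}$ support length you correctly extract from the Jacobi-polynomial asymptotics is a statement about one-dimensional width, and $4(r-r^2)$ is its square, but nothing in the proposal bridges widths to areas. (A smaller point: the one-step inequality $V(q')\leq\frac{\deg q-2}{\deg q-1}V(q)$ for centered polynomials is Schoenberg's conjecture, proved by Malamud and by Pereira, not the de Bruijn--Springer majorization, which only yields the factor $1$.)

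For comparison, Ravichandran's actual proof runs along entirely different lines: the normalized derivative $p^{(k)}$ is realized as the expected characteristic polynomial of random $(n-k)\times(n-k)$ principal submatrices of a normal matrix whose spectrum is the root set of $p$, and a barrier-function/interlacing argument in the style of restricted invertibility shows that for every closed half-plane supporting $H(p)$ the roots of $p^{(\lceil rn\rceil)}$ retreat quantitatively from the boundary line; a convex-geometry lemma then converts this directional information into the $4(r-r^2)$ factor for the area. If you want to pursue a proof yourself, the directional (support-function) viewpoint, not a global second-moment functional, is the essential ingredient.
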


\subsection{Convex Combinations of Incomplete Polynomials}

For $n$ not necessarily distinct points $z_1,z_2,\ldots,z_n\in\mathbb{C}$, and any $1\leq k\leq n$, let $g_k$ denote the $k^{\text{th}}$ incomplete polynomial $$g_k(z)=\ds\prod_{\stackrel{1\leq j\leq n}{j\neq k}}(z-z_j)$$ (that is, the monic degree $n-1$ polynomial whose zeros are exactly $z_1,z_2,\ldots,z_n$, except for $z_k$). In 2008, J.~L.~Diaz-Barrero and J.~J.~Egozcue~\cite{DiazBarreroEgozcue} provided the following generalization of the Gauss--Lucas theorem for convex combinations of incomplete polynomials.

\begin{theorem}
Let $z_1,z_2,\ldots,z_n\in\mathbb{C}$ be not necessarily distinct complex numbers. Let $r_1,r_2,\ldots,r_n\in[0,1]$ satisfy $\sum r_k=1$. Then the roots of the degree $n-1$ polynomial $A(z)=\ds\sum r_kg_k(z)$ all lie in the convex hull of the points $z_1,z_2,\ldots,z_n$.
\end{theorem}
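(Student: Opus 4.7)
The plan is to adapt the classical logarithmic-derivative proof of the Gauss--Lucas theorem to this weighted setting. Setting $p(z)=\prod_{j=1}^{n}(z-z_j)$, one has, for $z$ avoiding the zero set of $p$, the identity
$$\frac{A(z)}{p(z)}=\sum_{k=1}^{n}\frac{r_k}{z-z_k},$$
which is the natural weighted analogue of the usual $p'(z)/p(z)$. Note that $A$ is monic of degree $n-1$, since the leading coefficient of each $g_k$ is $1$ and the $r_k$ sum to $1$; in particular $A$ has exactly $n-1$ zeros.

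I would proceed by contradiction: suppose $z_0$ is a root of $A$ lying outside the convex hull $H$ of $\{z_1,\ldots,z_n\}$. Since every $z_k\in H$, in particular $z_0\neq z_k$ for each $k$, so $p(z_0)\neq 0$, and the identity above forces
$$\sum_{k=1}^{n}\frac{r_k}{z_0-z_k}=0.$$

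The final step is a standard separating-line argument. Since $z_0\notin H$ and $H$ is compact and convex, there exists $\theta\in\mathbb{R}$ such that $\operatorname{Re}\bigl(e^{i\theta}(z_0-z_k)\bigr)>0$ for every $k$. Using $1/w=\bar w/|w|^2$, this gives $\operatorname{Re}\bigl(e^{-i\theta}/(z_0-z_k)\bigr)>0$ for every $k$. Multiplying the preceding displayed equation by $e^{-i\theta}$ and taking real parts produces a non-negative linear combination (with weights $r_k$ summing to $1$) of strictly positive numbers, which cannot equal $0$. This is the desired contradiction.

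There is no substantial obstacle; the argument is a direct adaptation of Gauss--Lucas. The only point meriting care is that at least one $r_k$ must be strictly positive (guaranteed by $\sum r_k=1$) so that the concluding weighted sum is strictly positive rather than merely non-negative, and one should observe at the outset that the assumption $z_0\notin H$ rules out the degenerate case $z_0=z_k$, so the division by $p(z_0)$ is legitimate.
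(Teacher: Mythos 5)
Your proof is correct and complete: the identity $A(z)/p(z)=\sum_{k}r_k/(z-z_k)$ (where $p(z)=\prod_j(z-z_j)$) together with the separating half-plane argument is exactly the weighted analogue of the classical Gauss--Lucas argument, and you correctly handle the two points that need care (monicity of $A$, hence nonvanishing, and strict positivity of at least one weight). The paper itself is a survey and states this result without proof, only citing Diaz-Barrero and Egozcue, so there is no in-paper argument to compare against; your route is the standard one and, as the paper remarks, specializing to $r_k=1/n$ recovers the usual proof of the Gauss--Lucas theorem itself.
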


Note that in the previous theorem, setting each $r_k=1/n$ reproduces the classical Gauss--Lucas theorem.

\subsection{Approximate and Assymptotic Gauss--Lucas Theorems}

For a set $K\subset\mathbb{C}$, and an $\epsilon>0$, define $K_\epsilon$ to be the $\epsilon$-neighborhood of $K$. For a polynomial $p$, let $Z(p,K)$ denote the number of zeros of $p$ which lie in $K$. In 2016, V.~Totik~\cite{Totik} established the following asymptotic version of the Gauss--Lucas theorem.

\begin{theorem}\label{thm: Asymptotic GL theorem.}
For any bounded convex set $K\subset\mathbb{C}$, any $\epsilon>0$, and any sequence of polynomials $\{p_n\}$ with $\deg(p_n)=n$, if $\dfrac{Z(p_n,K)}{n}\to1$ then $\dfrac{Z({p_n}',K_\epsilon)}{n-1}\to1$.

\end{theorem}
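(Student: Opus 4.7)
The plan is to argue via weak-$*$ limits of the normalized counting measures of zeros and critical points, combined with logarithmic potential theory. After scaling, I may assume each $p_n$ is monic. Let $\mu_n = \frac{1}{n}\sum_j \delta_{z_j^{(n)}}$ and $\nu_n = \frac{1}{n-1}\sum_k \delta_{w_k^{(n)}}$ be the normalized counting measures of the zeros and critical points of $p_n$ (counted with multiplicity), viewed as probability measures on $\hat{\mathbb{C}}$. By weak-$*$ compactness on the compact space $\hat{\mathbb{C}}$, after passing to a subsequence I may assume $\mu_n \rightharpoonup \mu$ and $\nu_n \rightharpoonup \nu$. The hypothesis $Z(p_n,K)/n \to 1$ forces the mass of $\mu_n$ off $K$ to be $o(1)$, so whether the bad zeros remain bounded or escape to infinity, $\mu$ is a probability measure on $\mathbb{C}$ with $\mu(\bar K) = 1$ and no mass at $\infty$.

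The key step is to compare the logarithmic potentials of $p_n$ and $p_n'$. The identity $p_n'(z)/p_n(z) = n\int (z-w)^{-1}\, d\mu_n(w)$ shows that, for $z$ in a compact set $K' \subset \mathbb{C}\setminus \bar K$ disjoint from the (at worst $o(n)$) bad zeros, the Cauchy integral converges to the holomorphic function $C_\mu(z) = \int (z-w)^{-1}\,d\mu(w)$, which is nonvanishing outside a discrete subset of $\mathbb{C}\setminus \bar K$ (it behaves like $1/z$ at $\infty$). Hence $\tfrac{1}{n}\log|p_n'/p_n| = O((\log n)/n) \to 0$ locally on $\mathbb{C}\setminus\bar K$, and combined with the standard convergence $\tfrac{1}{n}\log|p_n(z)| = -U^{\mu_n}(z) \to -U^\mu(z)$ in $L^1_{\mathrm{loc}}(\mathbb{C})$, one gets $\tfrac{1}{n}\log|p_n'(z)| \to -U^\mu(z)$ in $L^1_{\mathrm{loc}}(\mathbb{C}\setminus \bar K)$. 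Taking distributional Laplacians, and noting that the Riesz measure of $\tfrac{1}{n}\log|p_n'|$ equals $\tfrac{n-1}{n}\nu_n$, this gives $\nu_n \rightharpoonup \mu$ in the sense of distributions on $\mathbb{C}\setminus\bar K$, so $\nu|_{\mathbb{C}\setminus \bar K} = 0$ and $\nu$ is supported on $\bar K \cup \{\infty\}$.

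The final, most delicate step is ruling out any mass of $\nu$ at $\infty$. For this I would use the critical-point identity $\sum_j 1/(z^* - z_j) = 0$ directly: writing $S_{\mathrm{good}}$ and $S_{\mathrm{bad}}$ for the contributions of good ($\in K$) and bad ($\notin K$) zeros, and projecting $S_{\mathrm{good}}$ onto the outward unit vector $e$ from the closest point of $K$ to $z^*$, convexity of $K$ gives $\mathrm{Re}(e/(z^* - z_j)) \geq \epsilon/|z^* - z_j|^2$ for every good $z_j$, so on any bounded region $\mathrm{Re}(e\cdot S_{\mathrm{good}}) \gtrsim \epsilon \cdot Z(p_n, K)$. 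At a critical point $z^* \notin K_\epsilon$ this forces $|S_{\mathrm{bad}}|$ to be comparably large, pinning $z^*$ within $o(1)$ of some bad zero. A local counting step --- bounding, via a Rouch\'e argument on small disks, how many zeros of $p_n'$ can cluster at each bad zero in terms of its multiplicity --- then yields $Z(p_n', \mathbb{C}\setminus K_\epsilon) = O(n - Z(p_n,K)) = o(n)$, which kills any mass of $\nu$ at $\infty$. With $\nu = \mu$ supported on $\bar K \subset K_\epsilon$, the portmanteau theorem applied to the open set $K_\epsilon$ gives $\liminf_n \nu_n(K_\epsilon) \geq \nu(K_\epsilon) = 1$, which is the claim. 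The main obstacle is this last local counting, which must be made uniform in $n$ despite arbitrary multiplicities and arbitrary spatial distribution of the bad zeros; the boundedness of $K$ and its convexity are essential to the directional lower bound on $S_{\mathrm{good}}$.
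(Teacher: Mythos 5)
The paper is a survey and states Theorem~\ref{thm: Asymptotic GL theorem.} (due to Totik) without proof, so there is no in-text argument to compare against; I can only assess your proposal on its own terms. Your overall architecture --- weak-$*$ limits $\mu_n\rightharpoonup\mu$, $\nu_n\rightharpoonup\nu$ on $\hat{\mathbb{C}}$, showing $\nu$ is carried by $\bar K\cup\{\infty\}$, then ruling out mass at $\infty$ and applying portmanteau --- is coherent, and your convexity computation $\Re\left(e/(z^*-z_j)\right)=\langle e,z^*-z_j\rangle/|z^*-z_j|^2\geq\epsilon/|z^*-z_j|^2$ for good zeros (via the obtuse-angle property of the metric projection onto the convex set $\bar K$) is correct and is indeed exactly how convexity should enter.

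The decisive gap is the ``local counting step.'' Note first that if you could actually prove $Z(p_n',\mathbb{C}\setminus K_\epsilon)=O(n-Z(p_n,K))=o(n)$, the theorem would follow in one line, and the entire potential-theoretic superstructure would be redundant; that single sentence is therefore carrying the whole proof, and it is precisely the hard part. The convexity estimate only pins each exterior critical point within $o(1)$ of \emph{some} bad zero (and only on bounded regions --- for critical points far from the origin the same computation degrades to distance $o(|z^*|)$), but it gives no control on \emph{how many} critical points accumulate near the bad set. The bad zeros may form hierarchical clusters at many scales, the associated critical points sit \emph{between} sub-clusters rather than at individual bad zeros, and no comparison function or contour for the Rouch\'e/argument-principle count is specified; making such a count uniform in $n$ is essentially the entire content of Totik's paper (and of the weak form of Conjecture~\ref{conj: Approx. GL theorem.} proved by Richards and Steinerberger), and typically requires a Cartan-type covering of the bad zeros by disks of controlled total radius on whose boundaries the bad contribution to $p_n'/p_n$ is dominated. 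A secondary but genuine issue: the ``standard convergence'' $\tfrac1n\log|p_n|\to -U^\mu$ in $L^1_{\mathrm{loc}}$ and the pointwise bound $\tfrac1n\log|p_n'/p_n|=O((\log n)/n)$ both fail as stated when even one bad zero lies at distance $e^{-n^2}$ from the point in question or escapes to infinity superexponentially fast; these steps need exceptional sets of small capacity, or a preliminary factorization removing the far zeros. As written, the proposal reduces the theorem to its hardest sub-claim and then asserts that sub-claim.
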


While not quite fitting in this section, results which are somewhat similar to Theorem~\ref{thm: Asymptotic GL theorem.} in flavor, to the effect that the critical points of a random polynomial converge in distribution to the zeros of the polynomial, may be found in~\cite{ORourkeWilliams,PemantleRivin}.

In 2017, T.~J.~Richards conjectured in a document posted on \textit{arxiv.org}~\cite{Richards1} that underlying the asymptotic Theorem~\ref{thm: Asymptotic GL theorem.} is the following static principle.

\begin{conjecture}\label{conj: Approx. GL theorem.}
For any bounded convex set $K\subset\mathbb{C}$, and any $\epsilon>0$, there is a constant $C_{K,\epsilon}\in(0,1)$ such that, for any polynomial $p$ with sufficiently large degree, if $\dfrac{Z(p,K)}{\deg(p)}>C_{K,\epsilon}$, then $Z(p',K_\epsilon)\geq Z(p,K)-1$.
\end{conjecture}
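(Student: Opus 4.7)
Write $n=\deg(p)$, $m=Z(p,K)$, and factor $p=qr$, where $q$ is the monic polynomial of degree $m$ whose zeros are exactly the zeros of $p$ in $K$, and $r$ carries the remaining $n-m$ zeros. Set $\phi(z)=p'(z)/p(z)=q'(z)/q(z)+r'(z)/r(z)$. My strategy is to apply the argument principle to $\phi$ on the boundary of the convex set $K_\delta$ for some $\delta\in(\epsilon/2,\epsilon)$ to be chosen carefully. Writing $s$ for the number of zeros of $r$ lying in $K_\delta$, we have $Z(p,K_\delta)=m+s$, and (assuming simple zeros for convenience)
\[
Z(p',K_\delta)-Z(p,K_\delta)=\frac{1}{2\pi i}\oint_{\partial K_\delta}\frac{\phi'(z)}{\phi(z)}\,dz=:W.
\]
Hence $Z(p',K_\epsilon)\geq Z(p',K_\delta)=m+s+W$, and it is enough to prove $W\geq -1-s$; in particular, $W=-1$ suffices.

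\textbf{Key steps.} First, by Gauss--Lucas applied to $q$ alone, all $m-1$ critical points of $q$ lie in $K\subset K_\delta$, so the winding number of $q'/q$ on $\partial K_\delta$ is exactly $(m-1)-m=-1$. Second, I would establish the lower bound $|q'(z)/q(z)|\geq m\delta/(\diam(K)+\delta)^2$ on $\partial K_\delta$: letting $u$ denote the outward unit normal to $K$ at the point of $K$ nearest $z$, convexity of $K$ gives $\mathrm{Re}(\bar u(z-a_j))\geq \delta$ for each zero $a_j$ of $q$, and each summand $1/(z-a_j)$ accordingly contributes a component of magnitude at least $\delta/(\diam(K)+\delta)^2$ in the direction $u$; summing gives the bound. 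Third, a pigeonhole argument in $\delta\in(\epsilon/2,\epsilon)$ selects a contour $\partial K_\delta$ at distance at least $\rho$ from every zero of $r$, which is possible whenever $\rho(n-m)<\epsilon/4$; on such a contour, $|r'(z)/r(z)|\leq (n-m)/\rho$. Finally, if $|r'/r|<|q'/q|$ on $\partial K_\delta$, the winding-number version of Rouch\'e's theorem gives $W=-1$, and we conclude $Z(p',K_\epsilon)\geq m+s-1\geq m-1$.

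\textbf{Main obstacle.} Running the above chain of estimates yields the requirement
\[
\frac{m\delta}{(\diam(K)+\delta)^2}\;>\;\frac{n-m}{\rho}\quad\text{with}\quad \rho<\frac{\epsilon}{4(n-m)},
\]
which forces $m\epsilon^2\gtrsim (\diam(K)+\epsilon)^2(n-m)^2$. Writing $\lambda=m/n$, this compels $1-\lambda=O(n^{-1/2})$, far stronger than the uniform threshold $C_{K,\epsilon}\in(0,1)$ demanded by the conjecture. Closing this gap is the main technical hurdle, and the blunt Rouch\'e bound is the culprit: it treats the worst alignment of all $n-m$ terms of $r'/r$ as typical. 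The most promising substitute, in my view, is a Walsh--Grace--style coincidence argument carried out pointwise rather than via contour integration: for a critical point $z^*\notin K_\epsilon$, the equation $q'(z^*)/q(z^*)=-r'(z^*)/r(z^*)$ combined with the convexity estimate (which gives $|q'/q(z^*)|\sim m/\mathrm{dist}(z^*,K)$) forces $z^*$ to lie in a very small neighborhood of some zero $b_j$ of $r$, since only a single dominant term of $r'/r$ can attain the required magnitude when $m/(n-m)$ is large. A careful multiplicity count, attaching at most one escaped critical point to each $b_j$, should then yield $Z(p',\mathbb{C}\setminus K_\epsilon)\leq n-m$ uniformly in $n$. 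Making this per-zero accounting genuinely uniform in the configuration of the $b_j$'s, and thereby recovering a dimensional constant $C_{K,\epsilon}$ independent of $\deg(p)$, is where I expect the substantive new ideas will be needed.
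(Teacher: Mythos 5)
First, a point of order: the statement you are proving is stated in the paper as a \emph{conjecture}, and the paper contains no proof of it. The survey records only (i) a strictly weaker theorem of Richards and Steinerberger, in which the threshold $C_{K,\epsilon}$ is replaced by the degree-dependent quantity $\bigl(\log(\deg p)-D_{K,\epsilon}\bigr)/\log(\deg p)$, and (ii) a remark that Totik has communicated an unpublished proof of the full conjecture. So there is no argument in the paper against which to measure yours; the only honest comparison is with the published partial result.

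Measured against that, your plan has a genuine and, by your own admission, fatal gap. The contour part of the argument (factor $p=qr$, note that $q'/q$ has winding number $-1$ on $\partial K_\delta$ by Gauss--Lucas, bound $|q'/q|\geq m\delta/(\diam(K)+2\delta)^2$ from below by the convexity/normal-direction estimate, choose $\delta$ by pigeonhole so that $\partial K_\delta$ avoids the zeros of $r$, and apply Rouch\'e) is sound as far as it goes, but your own bookkeeping shows it only closes when $n-m\lesssim\sqrt{n}$, i.e.\ $Z(p,K)/\deg(p)>1-O(n^{-1/2})$. That is strictly weaker than the Richards--Steinerberger threshold $1-O(1/\log n)$ already in the literature, and nowhere near a constant $C_{K,\epsilon}\in(0,1)$ independent of the degree. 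The proposed repair is not a proof but a hope: the assertion that the coincidence equation $q'(z^*)/q(z^*)=-r'(z^*)/r(z^*)$ forces $z^*$ into a small neighborhood of a \emph{single} zero $b_j$ of $r$ fails when the zeros of $r$ cluster (many moderate terms of $r'/r$ can conspire to produce a large value with no single dominant term), and the ``at most one escaped critical point per $b_j$'' multiplicity count is precisely the uniform-in-configuration statement that constitutes the whole difficulty. Note also that if that count held unconditionally it would prove the conclusion with no hypothesis on $Z(p,K)/\deg(p)$ at all, which should make you suspicious of the heuristic. In short: the executed portion recovers a result weaker than what is already known, and the portion that would prove the conjecture is not carried out.
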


In 2019, T. J. Richards and S. Steinerberger~\cite{RichardsSteinerberger} proved a weaker version of Conjecture~\ref{conj: Approx. GL theorem.}

\begin{theorem}
For any convex bounded set $K\subset\mathbb{C}$, and any $\epsilon>0$, there is a constant $D_{K,\epsilon}>0$ such that, for any polynomial $p$, if $\dfrac{Z(p,K)}{\deg(p)}>\dfrac{\log(\deg(p))-D_{K,\epsilon}}{\log(\deg(p))}$, then $Z(p',K_\epsilon)\geq Z(p,K)-1$.
\end{theorem}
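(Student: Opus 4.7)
The plan is to approach this via Rouché's Theorem applied to the natural factorization $p=qr$, where $q$ is the monic polynomial whose zeros are those of $p$ lying in $K$ (with multiplicity) and $r$ is the monic polynomial carrying the remaining zeros. Let $n=\deg p$ and $k=Z(p,K)$, so that $\deg q=k$ and $\deg r=n-k$; the hypothesis rewrites as $n-k\leq nD_{K,\epsilon}/\log n$. By the classical Gauss--Lucas Theorem applied to $q$ alone, all $k-1$ zeros of $q'$ already lie in the convex set $K\subseteq K_\epsilon$, and the goal is to transfer this count to $p'$.

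Using the identity $p'=q'r+qr'$, I would attempt a Rouché comparison on a contour $\gamma\subset K_\epsilon$ enclosing $K$: if $|qr'|<|q'r|$ on $\gamma$, equivalently $|r'/r|<|q'/q|$, then $p'$ and $q'r$ have the same number of zeros inside $\gamma$, giving at least $k-1$ zeros of $p'$ in $K_\epsilon$ (provided the zeros of $r$ are arranged to lie outside $\gamma$). The natural choice is $\gamma=\partial K_{\epsilon'}$ for $\epsilon'\in(\epsilon/2,\epsilon)$ selected by a pigeonhole argument over the $n-k$ zeros of $r$: since these are few, one can arrange that $\gamma$ stays at distance at least $\delta$ from each zero of $r$ where $\delta$ is a positive fraction of $\epsilon\log n/n$. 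Any zero of $r$ that happens to lie inside $\gamma$ is excised with a small disk, and the argument principle is applied on the resulting domain.

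With this contour in hand, the upper bound $|r'/r|\leq(n-k)/\delta$ is immediate from the separation. The main obstacle, and the place where the logarithmic calibration of the hypothesis becomes essential, is the matching lower bound on $|q'/q|$ on $\gamma$. A term-by-term bound fails because the sum $q'/q=\sum_{z_j\in K}1/(z-z_j)$ can cancel on $\gamma$. Instead one must argue globally: on the portion of $\gamma$ lying outside $K$, the terms $1/(z-z_j)$ point roughly in a common direction (those of the outward normal at the nearest point of $K$), so their sum is of order $k/\mathrm{dist}(z,K)$; one then must control the exceptional arcs of $\gamma$ where cancellation can occur.

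The heart of the proof, which I expect to be the main obstacle, is reconciling these two estimates in a quantitative way: the budget of $n-k\leq nD_{K,\epsilon}/\log n$ adverse zeros must be spent against the logarithmic scale arising from the averaging of $\epsilon'$, while the lower bound for $|q'/q|$ must be established on all but a set of $\gamma$ of small harmonic measure. Carrying out this balance---possibly by integrating the Rouché inequality over $\gamma$ rather than establishing it pointwise, and absorbing the exceptional arcs into a logarithmic loss---should yield the conclusion $Z(p',K_\epsilon)\geq k-1$, with the constant $D_{K,\epsilon}$ emerging from the geometry of $K$ (its diameter and the modulus of convexity at scale $\epsilon$) and from the numerical factors in the pigeonhole step.
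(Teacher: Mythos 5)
First, a point of reference: the paper is a survey and does not prove this theorem; it only states it with a citation to Richards--Steinerberger, so your proposal has to be measured against the argument in that reference. Your skeleton is the right one and matches it in structure: split the roots into those in $K$ (giving $q$) and those outside (giving $r$), work on a level curve $\partial K_{\epsilon'}$ of the distance function with $\epsilon'\in(\epsilon/2,\epsilon)$ chosen by an averaging argument, and compare $|q'/q|$ against $|r'/r|$ there via Rouch\'e or the argument principle. But the two estimates you defer are exactly where the theorem lives, and neither is closed; one of them fails outright as you have set it up.

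On the lower bound for $|q'/q|$: this is not the obstacle you fear. For $z\in\partial K_{\epsilon'}$ with nearest point $w\in K$, the supporting line of the convex set $K$ at $w$ perpendicular to $z-w$ separates $z$ from every root $z_j\in K$, so after multiplying by a single unimodular constant depending only on $z$, every term $1/(z-z_j)$ has real part at least $\epsilon'/(\epsilon+\diam K)^2$. Summing gives $|q'(z)/q(z)|\geq c_{K,\epsilon}\,k$ pointwise on all of $\partial K_{\epsilon'}$: there is no cancellation, no exceptional arc, and no harmonic-measure issue. The genuine gap is on the other side. Your bound $|r'/r|\leq(n-k)/\delta$, with $\delta$ produced by a single pigeonhole over the $n-k$ outer roots (so $\delta\asymp\epsilon\log n/n$ at best), yields $|r'/r|\lesssim(n-k)\,n/(\epsilon\log n)\asymp n^2/\log^2 n$, which exceeds the available lower bound $c_{K,\epsilon}\,k\asymp n$ by a factor of order $n/\log^2 n$; the Rouch\'e inequality cannot be verified this way for large $n$. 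The missing idea is to bound $|r'(z)/r(z)|\leq\sum_j 1/|\epsilon'-d_j|$ where $d_j=\operatorname{dist}(w_j,K)$ (using that the distance function is $1$-Lipschitz) and then to \emph{average the truncated sums} $\sum_j\min\bigl(M,\,1/|\epsilon'-d_j|\bigr)$ over $\epsilon'\in(\epsilon/2,\epsilon)$: each term contributes at most $O\bigl((1+\log(M\epsilon))/\epsilon\bigr)$ to the average, so some $\epsilon'$ satisfies $\sum_j 1/|\epsilon'-d_j|\lesssim(n-k)\log(M\epsilon)/\epsilon$ with $M\asymp n$. The factor $\log(M\epsilon)\asymp\log n$ is then exactly cancelled by the hypothesis $n-k\lesssim D_{K,\epsilon}\,n/\log n$, which is where the logarithmic threshold in the statement comes from. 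Without this averaging of reciprocal distances (a worst-case bound at a single pigeonholed level does not suffice), the quantitative balance you correctly identify as ``the heart of the proof'' does not close.
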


We also note here that in private correspondence, V.~Totik has communicated a proof of Conjecture~\ref{conj: Approx. GL theorem.} to the author, along with bounds on the constant $C_{k,\epsilon}$, and we look forward to seeing these results in print soon.

\subsection{A non-convex Gauss--Lucas Theorem for Polynomials with Non-negative Coefficients}

In 2017, Bl.~Sendov and H.S.~Sendov\cite{SendovSendov} proved an analogue to the Gauss--Lucas theorem for non-convex sectors of $\mathbb{C}$, provided that the coefficients of the polynomial in question are real and non-negative. In order to state the theorem, for $\alpha\in[0,\pi]$, define $\operatorname{Sect}(\alpha)=\{z\in\mathbb{C}:|\arg(z)|\geq\alpha\}$.

\begin{theorem}
If $p(z)$ has all real and non-negative coefficents, and all of the zeros of $p$ lie in the sector $\operatorname{Sect}(\alpha)$ for any $\alpha\in[0,\pi]$, then all of the critical points of $p$ lie in $\operatorname{Sect}(\alpha)$.
\end{theorem}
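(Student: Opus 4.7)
The plan is to work with the logarithmic derivative. Since every zero of $p$ lies in $\operatorname{Sect}(\alpha)$, $p$ does not vanish on the open ``forbidden'' sector $F_\alpha := \mathbb{C}\setminus\operatorname{Sect}(\alpha)=\{z:|\arg z|<\alpha\}$, so any critical point of $p$ in $F_\alpha$ must in fact be a zero of
$$\frac{p'(w)}{p(w)}=\sum_{k=1}^n\frac{1}{w-z_k}.$$
Hence it suffices to show this sum is nowhere zero on $F_\alpha$.

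I would first exploit the coefficient structure to normalize. Reality of the coefficients pairs the non-real zeros of $p$ into conjugate pairs $(z_k,\bar{z}_k)$. Non-negativity of the coefficients forces $p(x)>0$ for $x>0$ (as a sum of non-negative terms), so no zero of $p$ is positive real, and consequently every real zero is non-positive. Writing $z_k=r_ke^{i\theta_k}$ with $\theta_k\in[\alpha,\pi]$ for the upper non-real zeros and $-a_j$ (with $a_j\geq0$) for the real zeros, one gets a real factorization
$$p(z)=c\prod_j(z+a_j)\prod_k\bigl(z^2-2r_k\cos\theta_k\,z+r_k^2\bigr),$$
and the log derivative splits into atomic summands $\dfrac{1}{w+a_j}$ and $\dfrac{2(w-r_k\cos\theta_k)}{w^2-2r_k\cos\theta_k\,w+r_k^2}$.

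The core of the proof would then be to produce, for each $w\in F_\alpha$, a unimodular $\zeta(w)$ such that $\operatorname{Re}(\zeta(w)\sigma)\geq0$ for every atomic summand $\sigma$, with strict inequality in at least one, so that $\operatorname{Re}(\zeta(w)\cdot p'(w)/p(w))>0$ and hence $p'(w)\neq0$. In the borderline case $\alpha=\pi/2$ the constant choice $\zeta\equiv1$ suffices: $F_{\pi/2}$ is the open right half-plane, and for $\operatorname{Re}(w)>0$ combined with $\cos\theta_k\leq0$ and $a_j\geq0$, a direct calculation shows each atomic term has strictly positive real part. This is essentially the classical Routh--Hurwitz positivity argument.

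The main obstacle is extending this scheme to $\alpha\neq\pi/2$. For $\alpha>\pi/2$, the sector $F_\alpha$ reaches past the imaginary axis into the left half-plane, so no single $\zeta$ separates $F_\alpha$ from $\operatorname{Sect}(\alpha)$ by a half-plane, and one is forced to select $\zeta(w)$ piecewise based on the proximity of $\arg w$ to $\pm\pi/2$. For $\alpha<\pi/2$, quadratic factors with $\theta_k\in[\alpha,\pi/2)$ have strictly negative middle coefficient, so the non-negativity of the coefficients of $p$ is a global rather than factorwise property; the atomic-summand positivity strategy must then be supplemented by an argument relating the ``bad'' quadratic factors to the rest of the factorization, perhaps by pairing them with compensating linear or quadratic factors, or by an induction on $\deg p$ in which the removable factor is chosen (e.g.\ the conjugate pair of largest modulus or with $\theta_k$ closest to $\pi$) so that the quotient retains both hypotheses. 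Establishing the existence of such a removable factor, and carrying out the $\zeta(w)$-projection uniformly across $F_\alpha$, is where I expect the essential difficulty of the theorem to lie.
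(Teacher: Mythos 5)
Your proposal is a plan rather than a proof, and the part you leave open is the entire content of the theorem. Note that $\operatorname{Sect}(\alpha)=\{z:|\arg z|\geq\alpha\}$ is a sector of aperture $2(\pi-\alpha)$ about the negative real axis, so it is \emph{convex} precisely when $\alpha\geq\pi/2$. In that range the conclusion is an immediate corollary of the classical Gauss--Lucas theorem (the convex hull of the zeros already sits inside $\operatorname{Sect}(\alpha)$), and indeed your $\zeta(w)$-projection for $\alpha=\pi/2$, and the ``piecewise $\zeta$'' you sketch for $\alpha>\pi/2$, are just the standard separating-half-plane proof of Gauss--Lucas. The case that makes the theorem nontrivial --- and the only reason the hypothesis of non-negative coefficients is present at all --- is $\alpha<\pi/2$, where $\operatorname{Sect}(\alpha)$ is non-convex. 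There you correctly diagnose the obstruction (quadratic factors $z^2-2r_k\cos\theta_k\,z+r_k^2$ with $\theta_k\in[\alpha,\pi/2)$ have negative middle coefficient, so atomic positivity fails and the coefficient hypothesis is global rather than factorwise), but you do not overcome it: no $\zeta(w)$ is produced, and the proposed induction is not shown to close. In particular, the ``removable factor'' step is doubtful as stated: for $p(z)=z^3+1$ (coefficients non-negative, zeros $-1$ and $e^{\pm i\pi/3}$) the quotient by the linear factor is $z^2-z+1$, which violates the coefficient hypothesis; and even when a factor preserving both hypotheses exists, knowing the quotient has no critical point in the forbidden sector says nothing direct about the critical points of $p$, since the logarithmic derivative of $p$ is the \emph{sum} of those of its factors. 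So the essential difficulty is named but not resolved, and the argument as written establishes nothing beyond what Gauss--Lucas already gives.

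For what it is worth, the paper is a survey and states this result of Bl.~Sendov and H.~S.~Sendov without proof, so there is no in-paper argument to compare against; the proof in the cited work is substantially more involved than a factorwise positivity argument and does not follow the route you outline.
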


\subsection{Converses to the Gauss--Lucas Theorem}

In 2014, N.~Nikolov and B.~Sendov~\cite{NikolovSendov} proved the following converse to the Gauss--Lucas theorem, showing that differentiation is the only non-trivial linear operator which contracts zero sets.

\begin{theorem}
Let $S:\mathbb{C}[z]\to\mathbb{C}[z]$ be a linear operator for which $H(S(p))\subset H(p)$ for all $p\in\mathbb{C}[z]$. Then either $S$ is complex-valued (ie. $S(\mathbb{C})\subset\mathbb{C}$), or there is some $c\in\mathbb{C}\setminus\{0\}$ and some integer $n\geq0$ for which $S(p)=cp^{(n)}$.

\end{theorem}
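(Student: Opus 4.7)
The plan is to determine the polynomials $q_k := S(z^k)$ for every $k \geq 0$ and then appeal to linearity. The only tool is the hypothesis specialized to $p = (z-a)^k$: since $H((z-a)^k) = \{a\}$, the polynomial $S((z-a)^k)$ must have all its zeros at $a$, and hence is either the zero polynomial, a nonzero constant, or of the form $c(z-a)^d$ with $d \geq 1$. Taking $k = 0$ immediately forces $q_0 = S(1)$ to be a constant. By linearity, if \emph{every} $q_k$ is a constant, then $S(p) = \sum a_k q_k$ is a constant for every $p$, giving the ``complex-valued'' alternative $S(\mathbb{C}[z]) \subset \mathbb{C}$. So I would assume some $q_k$ is non-constant and let $n$ be the smallest such index.

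Writing the constants $\lambda_k := q_k$ for $k < n$ and expanding $(z-a)^n$ via the binomial theorem gives
$$S((z-a)^n) = q_n(z) + C(a), \qquad C(a) := \sum_{k=0}^{n-1}\binom{n}{k}(-a)^{n-k}\lambda_k,$$
which must equal $c(z-a)^d$ with $d = \deg q_n \geq 1$. Equating coefficients of $z^j$ for $1 \leq j < d$ yields $c\binom{d}{j}(-a)^{d-j} = [q_n]_j$, which is $a$-independent only if $d = 1$ (else one needs $c = 0$, contradicting $d \geq 1$). Thus $q_n$ is linear, and the remaining coefficient identity $C(a) = -ca - [q_n]_0$ in $a$ forces $\lambda_0 = \cdots = \lambda_{n-2} = 0$, $[q_n]_0 = 0$, $c = n\lambda_{n-1}$, with $\gamma := \lambda_{n-1} \neq 0$. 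Setting $m := n-1$, we have $q_k = 0$ for $k < m$, $q_m = \gamma$, and $q_{m+1} = (m+1)\gamma z$.

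An induction on $K$ closes the proof. Assuming $q_k = \binom{k}{m}\gamma z^{k-m}$ for $m \leq k < K$, the identity $\binom{K}{k}\binom{k}{m} = \binom{K}{m}\binom{K-m}{k-m}$ together with the binomial theorem rewrites
$$S((z-a)^K) = \bigl(q_K(z) - \tbinom{K}{m}\gamma z^{K-m}\bigr) + \tbinom{K}{m}\gamma(z-a)^{K-m}.$$
For generic $a$ the right side has degree $K-m \geq 2$ in $z$, so it is neither zero nor a constant, and hence must vanish at $z = a$. This gives $q_K(a) = \binom{K}{m}\gamma a^{K-m}$ for all but finitely many $a$, hence as polynomials. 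Summing over the monomial basis yields $S(p) = (\gamma/m!)\,p^{(m)}$, matching the theorem with $c = \gamma/m!$.

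The main obstacle is the coefficient matching in the second paragraph, which simultaneously forces $q_n$ to be linear and kills $\lambda_0,\dots,\lambda_{n-2}$; once that base case is unlocked, the induction is routine binomial bookkeeping. The nuisance ``nonzero constant or zero'' alternatives for $S((z-a)^K)$ are ruled out throughout by a uniform degree-in-$z$ count showing that the relevant expression is a non-constant polynomial in $z$ for all but finitely many $a$.
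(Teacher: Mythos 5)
The paper is a survey and gives no proof of this theorem; it only states the result with a citation to Nikolov and Sendov, so there is nothing in the source to compare your argument against line by line. Judged on its own, your proof is correct and pleasantly elementary: testing $S$ on the family $(z-a)^k$, whose hull is the single point $\{a\}$, forces $S((z-a)^k)$ to be zero, a nonzero constant, or $c_a(z-a)^{d_a}$, and letting $a$ vary extracts enough linear-algebraic information to pin down every $S(z^k)$. The base case correctly shows the first non-constant $q_n$ must be linear with zero constant term (since $[q_n]_j=c\binom{d}{j}(-a)^{d-j}$ cannot be $a$-independent for $1\le j\le d-1$ unless $d=1$), and the matching of $C(a)=-ca$ kills $\lambda_0,\dots,\lambda_{n-2}$ and gives $c=n\lambda_{n-1}\ne 0$. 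The induction via $\binom{K}{k}\binom{k}{m}=\binom{K}{m}\binom{K-m}{k-m}$ is also correct. One small imprecision: in the inductive step the right-hand side $R(z)+\binom{K}{m}\gamma(z-a)^{K-m}$ need not have degree exactly $K-m$ (the unknown $q_K$ could have larger degree, or the top coefficients could cancel for a particular $a$); what you actually need, and what does hold, is that the coefficient of $z^{K-m-1}$ is an affine non-constant function of $a$, so the expression is non-constant for all but one value of $a$, whence it must vanish at $z=a$ and $R\equiv 0$ follows. You acknowledge this caveat in your closing paragraph, so the gap is cosmetic rather than substantive. This is in the same spirit as the original Nikolov--Sendov argument (which also exploits polynomials with a single repeated zero), but your write-up is self-contained and would serve as a complete proof of the statement as given.
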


Another direction in which one might look for a converse to the Gauss--Lucas theorem is an identification of those collection of $n-1$ points in a given convex set $K\subset\mathbb{C}$ which might appear as the critical points of some degree-$n$ complex polynomial having all of its zeros lying in $K$. Along these lines, in 2017 C.~Frayer~\cite{Frayer1} established the following theorem for polynomials with three distinct roots. Any such polynomial may be normalized to have a zero at $1$, and its other two roots on the unit circle. Let $p(z)=(z-1)^k(z-d_1)^m(z-d_2)^n$, for $d_1,d_2\in\mathbb{T}$. Let $P(k,m,n)$ denote the collection of all such polynomials $p$. For $r\in(0,1)$, define $$T_r=\left\{z\in\mathbb{C}:\left|z-\left(1-\dfrac{r}{2}\right)\right|=\dfrac{r}{2}\right\},$$ the circle centered at $1-\dfrac{r}{2}$, which is tangent to the unit circle at $1$.

\begin{theorem}\label{thm: Three distinct zeros.}
Fix positive integers $k$, $m$, and $n$.
\begin{itemize}
\item
No polynomial $p\in P(k,m,n)$ has a critical point on the region interior to $T_{\frac{2k}{k+m+n}}$.

\item
If $m\neq n$, then additionally, no $p\in P(k,m,n)$ has a critical point on the region $D$ defined directly after this theorem.

\item
If $c\in\mathbb{D}$ is in neither of the regions mentioned above, then there is a $p\in P(k,m,n)$ with a critical point at $c$. If $c$ is on the boundary of these regions, this polynomial is unique. If $c$ is not on the boundary of these regions, there are exactly two such polynomials.
\end{itemize}

\end{theorem}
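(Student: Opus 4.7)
The plan is to reduce the question to analyzing the Minkowski sum of two circles. Since $p(z)=(z-1)^k(z-d_1)^m(z-d_2)^n$, the logarithmic derivative shows that a point $c\in\mathbb{D}$ distinct from the zeros is a critical point of $p$ exactly when
\[
\frac{m}{c-d_1}+\frac{n}{c-d_2}=\frac{k}{1-c}.
\]
The right-hand side depends only on $c$, so the idea is to sweep the left-hand side as $(d_1,d_2)$ ranges over $\mathbb{T}^2$ and characterize those $c$ for which $k/(1-c)$ lies in the swept range.

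For fixed $c\in\mathbb{D}\setminus\{1\}$, the M\"obius map $d\mapsto m/(c-d)$ carries $\mathbb{T}$ onto a circle $C_1$; a direct calculation (inverting the circle $\{c-d:d\in\mathbb{T}\}$, which is centered at $c$ and has radius $1$) shows $C_1$ has center $-m\bar c/(1-|c|^2)$ and radius $m/(1-|c|^2)$, and similarly $d\mapsto n/(c-d)$ sweeps a circle $C_2$ of radius $n/(1-|c|^2)$ centered at $-n\bar c/(1-|c|^2)$. The Minkowski sum $C_1+C_2$ is then the annulus centered at $-(m+n)\bar c/(1-|c|^2)$ with inner radius $|m-n|/(1-|c|^2)$ and outer radius $(m+n)/(1-|c|^2)$, and $c$ is a critical point of some $p\in P(k,m,n)$ if and only if $k/(1-c)$ lies in this annulus.

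The first forbidden region arises from the outer-radius inequality $|k/(1-c)+(m+n)\bar c/(1-|c|^2)|\leq(m+n)/(1-|c|^2)$. Multiplying through by $1-|c|^2$ and applying the identity $(1-|c|^2)/(1-c)=(1-\bar c)/(1-c)+\bar c$ rewrites this as $|k(1-\bar c)/(1-c)+N\bar c|\leq N-k$, where $N=k+m+n$. Since $|(1-\bar c)/(1-c)|=1$, squaring and simplifying collapses the inequality to $|c-(1-k/N)|\geq k/N$, which is precisely the condition that $c$ lies outside (or on) the boundary of $T_{2k/N}$; this proves item 1. Running the same scheme through the inner-radius inequality $|k/(1-c)+(m+n)\bar c/(1-|c|^2)|\geq|m-n|/(1-|c|^2)$ should carve out the region $D$ of item 2, and the inner radius vanishes precisely when $m=n$, explaining why $D$ is absent in the symmetric case.

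Item 3 then falls out of the geometry of the Minkowski sum: when $k/(1-c)$ lies strictly inside the open annulus, substituting $C=k/(1-c)-B$ reduces the system to intersecting two circles in the plane, which meet in exactly two points, yielding two ordered pairs $(d_1,d_2)$ and hence (generically) two distinct polynomials; on the annulus boundary these intersection points coalesce and the polynomial is unique. The main technical obstacle is the algebraic simplification needed for item 2: although structurally the same as the computation for item 1, the appearance of $|m-n|$ in place of $m+n$ breaks the symmetry and forces a more delicate calculation, and one must also verify that the two forbidden disks $T_{2k/N}$ and $D$ fit together so that the three-way dichotomy (interior, boundary, exterior of the allowed region) correctly accounts for the counts asserted in item 3.
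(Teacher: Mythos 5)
This theorem is quoted in the survey from Frayer's paper \cite{Frayer1} and is not proved in the text, so there is no in-paper proof to compare against; I am judging your argument on its own terms. Your overall strategy is the right one and is essentially the circle-geometry technique used in the literature on these problems: the reduction via the logarithmic derivative to $\tfrac{m}{c-d_1}+\tfrac{n}{c-d_2}=\tfrac{k}{1-c}$ is correct (and applies to every $c\in\mathbb{D}$, since all zeros lie on $\{1\}\cup\mathbb{T}$), your computation of the image circles $C_1$, $C_2$ is correct, the Minkowski sum of two circles is indeed the stated annulus, and I have checked that your reduction of the outer-radius inequality to $\left|c-\left(1-\tfrac{k}{N}\right)\right|\geq \tfrac{k}{N}$ is algebraically sound. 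So the first bullet is genuinely proved.

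There are, however, two real gaps. First, for the second bullet you only assert that the inner-radius inequality ``should carve out'' the region $D$; since $D$ is specified by an explicit parameterized curve, verifying that the locus $\left|k\tfrac{1-\bar c}{1-c}+N\bar c\right|<|m-n|$ coincides with that region is the entire content of that bullet, and you have not done it. Second, and more seriously, the counting in the third bullet conflates ordered pairs $(d_1,d_2)$ with polynomials. When $m\neq n$ your argument is fine: two transversal intersection points of $C_2$ and $w-C_1$ (where $w=k/(1-c)$) give two distinct polynomials, and tangency gives one. But when $m=n$ the set $C\cap(w-C)$ is invariant under $z\mapsto w-z$, so its two points are always $P$ and $w-P$; the two ordered pairs are swaps of one another and determine the \emph{same} polynomial. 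Concretely, for $k=m=n=1$ and $c=0$ the only solution of $d_1+d_2=-1$ on $\mathbb{T}\times\mathbb{T}$ gives the single polynomial $z^3-1$, not two. Worse, at the point where $w$ equals the annulus center (for $m=n$ this is $c=-k/N$, which lies in $\mathbb{D}$ outside $T_{2k/N}$) the circles $C$ and $w-C$ coincide and one obtains a one-parameter family of polynomials with a critical point at $c$ (e.g.\ $(z-1)(z-d)\left(z+\tfrac{3+5d}{5+3d}\right)$ for every $d\in\mathbb{T}$ has a critical point at $-1/3$). So your intersection-of-circles count must be supplemented by a separate analysis of the case $m=n$, where the asserted dichotomy does not come out of the argument as written.
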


If $m\neq n$ (with $m<n$), as in the second part of the above theorem, the region $D$ is bounded by the degree-$2$ algebraic curve which is parameterized by $\gamma(t)=(x,y)$, where $$x=\dfrac{(m+n+k)^2t^2-[2(m+n+k)(m+n+2k)-4mn]t+4k(m+n+k)}{(m+n+k)((m+n+k)t-2k)(t-2)},$$ and $$y^2=(1-x)(t-1+x),$$ for $\dfrac{2(m+k)}{m+n+k}\leq t\leq\dfrac{2(n+k)}{m+n+k}$.

In~\cite{Frayer2}, Frayer provided a geometric construction of the polynomials whose existence is guaranteed by Theorem~\ref{thm: Three distinct zeros.}.

A monograph could easily be devoted to the many refinements, generalizations, and other work which the Gauss--Lucas theorem has inspired over the years. Those contained in this section are only the most recent ones which refer to general polynomials of one complex variable. For other refinements and generalizations, see~\cite{ORourkeWilliams} and the many references contained therein.

\section{The Geometry of the Lemniscates}
\label{sect: Geometry of Lemniscates.}

The lemniscates of complex polynomials (that is, the components of the level sets $\Lambda_\epsilon(p)=\{z:|p(z)|=\epsilon\}$ for a complex polynomial $p$ and an $\epsilon>0$) have inspired considerable interest since 1680, when they were studied by G.~D.~Cassini (see~\cite{Yates} for example). The length, area circumscribed, convexity, and other geometric properties, provide a common locus of study. In their 1958 paper \textit{Metric Properties of Polynomials}~\cite{ErdosHerzogPiranian}, P.~Erd\H{o}s~et.~al. posed a number of problems surrounding these geometric properties, one of which we will begin with.

\subsection{The Erd\H{o}s--Herzog--Piranian Lemniscate Problem}

Let $L_n$ denote the maximum length of the level set $\Lambda_1(p)$, for any degree $n$ polynomial $p$. Erd\H{o}s~et.~al. conjectured that $p_n(z)=z^n+1$ is the polynomial which maximizes this length: $\Lambda_1(p_n)=L_n$. Note that the maximal length $L_n$ is known to be achieved by some polynomial, and that $\Lambda_1(p_n)$ is known to equal $2n+O(1)$ (for these and other results on the so-called Erd\H{o}s--Herzog--Piranian Lemniscate Problem, see references in~\cite{WangPeng,KuznetsovaTkachev}). In 2003, O.~S.~Kuznetsova and V.~G.~Tkachev~\cite{KuznetsovaTkachev} studied the growth of the length functions for the level sets of analytic functions. They established a number of results, of which we mention one here in the context of complex polynomials. For a rectifiable plane curve $\mathcal{C}$, let $\ell(\mathcal{C})$ denote the length of $\mathcal{C}$. For a monic polynomial $p$, define the auxiliary function $$F_p(t)=\ln\left|\ell(\Lambda_{e^t}(p))\right|-\dfrac{t}{\deg(p)}.$$ Kuznetzova and Tkachev showed the following.

\begin{theorem}\label{thm: Lemniscate growth.}
For a monic polynomial $p\in\mathbb{C}[z]$, the maps $t\mapsto\ell(\Lambda_{e^t}(p))$ and $t\mapsto F_p(t)$ are continuous for $t\in(-\infty,\infty)$. Moreover if $p$ is non-trivial (ie. not of the form $c(z-a)^n$ for constants $c,a\in\mathbb{C}$ and non-negative integer $n$), then the following hold.

\begin{itemize}
\item
Both maps mentioned above are strictly convex on any interval $(s_1,s_2)\subset\mathbb{C}$ not containing the logarithm of the modulus of a critical value of $p$.

\item
$\ds\lim_{t\to+\infty} F_p(t)=\ln(2\pi)$.
\end{itemize}

\end{theorem}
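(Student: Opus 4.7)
The plan is to express $L(t):=\ell(\Lambda_{e^t}(p))$ as a $w$-plane integral and exploit Hardy-type convexity on the uniformized branches of $p^{-1}$. Pushing the level curve into the $w$-plane via $w=p(z)$ and using $|dz|=|dw|/|p'(z)|$ on each local sheet yields
\[
L(t) = e^t\int_0^{2\pi} h\bigl(e^{t+i\theta}\bigr)\,d\theta,\qquad h(w) = \sum_{p(z)=w}\frac{1}{|p'(z)|},
\]
where the sum runs over the $n=\deg(p)$ preimages counted with multiplicity. The function $h$ is smooth and positive on $\mathbb{C}\setminus V$, with $V$ the finite set of critical values of $p$. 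Near a critical value $w_0 = p(c)$ with $c$ of local multiplicity $k$, a Taylor expansion gives $h(w)=O(|w-w_0|^{-(k-1)/k})$, which is integrable on every arc through $w_0$; a dominated-convergence argument then produces continuity of $t\mapsto L(t)$ (and hence of $F_p(t)$) across the logs of moduli of critical values, i.e., on all of $\mathbb{R}$.

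For strict convexity on an interval $(s_1,s_2)$ of regular values, I would pass to the uniformization of the cover. The annulus $A=\{e^{s_1}<|w|<e^{s_2}\}$ contains no critical value, so $p^{-1}(A)$ decomposes as a disjoint union of topological annuli $C_1,\ldots,C_m$, with each restriction $p\colon C_i\to A$ a $d_i$-fold cyclic cover ($\sum d_i = n$). Each such cyclic cover is uniformized by the branched substitution $u\mapsto u^{d_i}$: there exists an injective holomorphic map
\[
\psi_i\colon\bigl\{e^{s_1/d_i}<|u|<e^{s_2/d_i}\bigr\}\to\mathbb{C},\qquad p\bigl(\psi_i(u)\bigr)=u^{d_i},
\]
whose image on the circle $|u|=e^{t/d_i}$ is exactly the $i$-th component of $\Lambda_{e^t}(p)$, of length
\[
L_i(t) = \int_{|u|=e^{t/d_i}}|\psi_i'(u)|\,|du|.
\]
Hardy's $L^1$-convexity theorem, applied to the holomorphic $\psi_i'$ on the uniformizing annulus, makes $\rho\mapsto\int_{|u|=\rho}|\psi_i'|\,|du|$ log-convex in $\log\rho$, strictly so unless $\psi_i'(u)=cu^k$ is a Laurent monomial. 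But this monomial case, combined with $p(\psi_i(u))=u^{d_i}$, forces $p(z)=c'(z-a)^n$, i.e., $p$ trivial. Since log-convexity is preserved under sums (by H\"older's inequality) and positive log-convex functions are convex, summing over $i$ gives that $L(t)=\sum L_i(t)$ is strictly log-convex, and hence both $L(t)$ and $F_p(t)=\log L(t) - t/n$ are strictly convex on $(s_1,s_2)$ whenever $p$ is non-trivial.

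The asymptotic $\lim_{t\to+\infty}F_p(t)=\log(2\pi)$ comes from the behavior of $p$ near infinity. For $t>\log\max_{v\in V}|v|$, the preimage $p^{-1}(\{|w|=e^t\})$ is a single Jordan curve (the monodromy of $p$ around infinity being an $n$-cycle), so $m=1$ and $d_1=n$; the uniformizing map then admits a Laurent expansion $\psi_1(u)=u+c_0+c_{-1}u^{-1}+\cdots$ near infinity, yielding $|\psi_1'(u)|\to 1$ as $|u|\to\infty$. Taking $\rho=e^{t/n}$ in the formula for $L_1(t)$ gives $L(t)=2\pi e^{t/n}(1+o(1))$ and hence $F_p(t)=\log(2\pi)+o(1)$. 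The principal technical obstacle is the uniformization step coupled with the equality case in Hardy's $L^1$-convexity theorem: one must verify that $\psi_i'$ being a Laurent monomial is equivalent to $p$ being of the trivial form $c(z-a)^n$, so that strict convexity fails precisely on the excluded family. Once the multi-valued branches of $p^{-1}$ are repackaged into the single-valued $\psi_i$ via the cyclic cover, the remainder reduces to classical one-variable complex analysis.
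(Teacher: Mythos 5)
This theorem is a survey item: the paper states it as a result of Kuznetsova and Tkachev \cite{KuznetsovaTkachev} and gives no proof, so there is no in-paper argument to compare yours against. On its own terms, your outline is largely sound: the coarea-type formula $L(t)=e^t\int_0^{2\pi}h(e^{t+i\theta})\,d\theta$ with $h(w)=\sum_{p(z)=w}|p'(z)|^{-1}$ is correct, the $O(|w-w_0|^{-(k-1)/k})$ bound near a critical value does give an integrable dominating function and hence continuity across singular levels, the cyclic-cover uniformization $p(\psi_i(u))=u^{d_i}$ over an annulus of regular values is standard covering-space theory, and the asymptotic $\psi_1(u)=u+c_0+c_{-1}u^{-1}+\cdots$ (using monicity) correctly yields $F_p(t)\to\ln(2\pi)$. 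The elimination of the monomial case also checks out: injectivity of $\psi_i$ forces $\psi_i'(u)=cu^k$ with $k\in\{0,-2\}$, and $k=0$ gives $p(z)=((z-a)/c)^{d_i}$ (trivial) while $k=-2$ is incompatible with $p$ being a polynomial.

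The genuine gap is exactly where you point: the strict-equality case of the ``$L^1$ Hardy convexity theorem'' on an annulus. What you need is that $\log M_1(\rho,g)$, with $M_1(\rho,g)=\frac{1}{2\pi}\int_0^{2\pi}|g(\rho e^{i\theta})|\,d\theta$, is convex in $\log\rho$ and is affine on a subinterval \emph{only if} $g$ is a Laurent monomial --- and you need the statement in this local form, since a convex function can be affine on a subinterval without being affine globally, and your sum argument requires at least one summand to be strictly log-convex at every point of $(s_1,s_2)$. Convexity of $\log M_1$ is classical (Hardy/Hadamard three-circles for integral means), but the characterization of the equality case for $p=1$ is not a result you can simply cite the way you can for $M_2$ (where $M_2(\rho)^2=\sum|a_k|^2\rho^{2k}$ makes it transparent) or for $M_\infty$; it requires an argument, and it is precisely the load-bearing step for the strict convexity assertion, which is the substantive content of the theorem. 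Everything else in your sketch is routine one-variable complex analysis, so either supply a proof of this equality case (for instance by reducing to the subharmonicity of $|g|$ and a Riesz-measure computation, or by an explicit second-derivative/Cauchy--Schwarz argument in the spirit of what Kuznetsova and Tkachev actually do for general analytic functions) or locate a citable reference for it; as written, the strict convexity claim rests on an unproved assertion.
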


If, for $s\in\mathbb{R}$, we define the dilation function $p_s(z)=e^{-s}p\left(ze^{s/\deg(p)}\right)$, the level set and growth function discussed in Theorem~\ref{thm: Lemniscate growth.} satisfy the following invariance properties: $$\Lambda_{e^t}(p_s)=e^{-s/n}\Lambda_{e^{s+t}}(p)\text{ and }F_{p_s}(t)=F_p(s+t).$$ These properties allow one, for instance, to derive from Theorem~\ref{thm: Lemniscate growth.} also some length properties of the family of level sets $\Lambda_1(z^n+r)$, for $r\in\mathbb{R}$. These length properties were also discovered (and augmented) independently in 2006 by C.~Wang and L.~Peng~\cite{WangPeng}. They showed the following.

\begin{theorem}
For any integer $n\geq 1$ and $r\in\mathbb{R}$, define $\gamma_n(r)=\ell\left(\Lambda_1(z^n+r)\right)$.

\begin{itemize}
\item
${\gamma_n}'\geq 0$ on $(0,1)$ and ${\gamma_n}'\leq 0$ on $(1,\infty)$.

\item
${\gamma_n}''\geq 0$ on $(0,1)\cup(1,\infty)$.

\item
For any integer $n$, $4\log(2)\leq\gamma_n(1)-2n\leq 2(\pi-1)$.
\end{itemize}
\end{theorem}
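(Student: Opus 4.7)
The strategy is to derive an explicit integral formula for $\gamma_n(r)$ and then attack each item separately. Parametrizing the lemniscate $\Lambda_1(z^n + r)$ by $z^n + r = e^{i\theta}$, I compute $|dz| = |e^{i\theta} - r|^{-(n-1)/n}\,d\theta/n$ on each of the $n$ branches of $z = (e^{i\theta} - r)^{1/n}$. Summing over branches (and noting that although the connectivity of the lemniscate changes at $r = 1$, the length formula is unaffected) yields the key identity
$$\gamma_n(r) = \int_0^{2\pi}\frac{d\theta}{|e^{i\theta} - r|^{(n-1)/n}}.$$

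For items (1) and (2), set $\alpha = (n-1)/n$ and factor $|e^{i\theta} - r|^{-\alpha} = \bigl((1 - re^{i\theta})(1 - re^{-i\theta})\bigr)^{-\alpha/2}$. Expanding each factor by the binomial series and integrating term by term gives, for $r \in (0,1)$,
$$\gamma_n(r) = 2\pi \sum_{k=0}^\infty \left(\frac{\Gamma(\alpha/2 + k)}{\Gamma(\alpha/2)\,k!}\right)^2 r^{2k},$$
a power series in $r^2$ with non-negative coefficients; hence $\gamma_n' \geq 0$ and $\gamma_n'' \geq 0$ on $(0, 1)$. For $r > 1$, factoring out $r^{-\alpha}$ and expanding in $1/r$ gives $\gamma_n(r) = 2\pi \sum c_k r^{-\alpha - 2k}$ with the same coefficients, from which $\gamma_n' \leq 0$ and $\gamma_n'' \geq 0$ follow by term-by-term differentiation.

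For item (3), I would evaluate $\gamma_n(1)$ via the substitution $\phi = \theta/2$ and the beta integral to obtain
$$\gamma_n(1) = 2^{1/n}\,B\!\left(\tfrac{1}{2n}, \tfrac12\right) = 2n \cdot f\!\left(\tfrac{1}{2n}\right), \qquad f(x) := 4^x\sqrt{\pi}\,\frac{\Gamma(1+x)}{\Gamma(1/2+x)},$$
so that $\gamma_n(1) - 2n = \bigl(f(1/(2n)) - 1\bigr)/(1/(2n))$. Direct computation gives $f(1/2) = \pi$, hence $\gamma_1(1) - 2 = 2(\pi - 1)$ (the upper bound is sharp at $n = 1$), while $f(0) = 1$ and
$$f'(0) = f(0)\bigl[2\log 2 + \psi(1) - \psi(1/2)\bigr] = 4\log 2$$
(using $\psi(1) = -\gamma$ and $\psi(1/2) = -\gamma - 2\log 2$), so $\gamma_n(1) - 2n \to 4\log 2$ as $n \to \infty$. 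The two-sided bound follows once one shows the slope $(f(x) - 1)/x$ is non-decreasing on $(0, 1/2]$: setting $g(x) = xf'(x) - (f(x) - 1)$ gives $g(0) = 0$ and $g'(x) = xf''(x)$, so this reduces to convexity of $f$ on $[0, 1/2]$.

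The main obstacle is this convexity step, which amounts to the polygamma inequality $\bigl(2\log 2 + \psi(1+x) - \psi(1/2+x)\bigr)^2 \geq \psi'(1/2+x) - \psi'(1+x)$ on $[0, 1/2]$. I expect this to be tractable by comparing the Hurwitz-series representations of the polygamma functions together with the monotonicity of each side in $x$, but it is the one genuinely computational step in the argument.
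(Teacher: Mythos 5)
This survey states the theorem only as a citation to Wang and Peng \cite{WangPeng} and contains no proof of it, so there is no in-paper argument to compare against; your proposal has to stand on its own, and it essentially does. The length identity $\gamma_n(r)=\int_0^{2\pi}|e^{i\theta}-r|^{-(n-1)/n}\,d\theta$ is correct (the $n$ branches each contribute a factor $1/n$, and the change in connectivity at $r=1$ indeed does not affect the total length), and the binomial/hypergeometric expansion giving $\gamma_n(r)=2\pi\sum_k\bigl((\alpha/2)_k/k!\bigr)^2 r^{2k}$ for $r\in(0,1)$ and $2\pi\sum_k\bigl((\alpha/2)_k/k!\bigr)^2 r^{-\alpha-2k}$ for $r>1$ settles the first two items cleanly, since every term is nonnegative, increasing and convex on $(0,1)$, and nonincreasing and convex on $(1,\infty)$. (Minor point: for $n=1$ write the coefficients with Pochhammer symbols rather than $\Gamma(\alpha/2)$, which has a pole at $\alpha=0$; the series then correctly degenerates to the constant $2\pi$.) The beta-integral evaluation $\gamma_n(1)=2^{1/n}B(\tfrac{1}{2n},\tfrac12)=f(x)/x$ with $x=1/(2n)$ is also correct, as are the endpoint values $f(1/2)=\pi$ and $f'(0)=4\log 2$, and the reduction of the two-sided bound to convexity of $f$ on $[0,1/2]$ via $g(x)=xf'(x)-(f(x)-1)$ is sound.

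The one step you leave open does close, and by exactly the monotonicity comparison you suggest, so there is no real gap. Writing $f=e^{h}$ with $h'(x)=2\log 2+\psi(1+x)-\psi(\tfrac12+x)$, the series $\psi(1+x)-\psi(\tfrac12+x)=\sum_{k\ge0}\tfrac{1/2}{(k+1/2+x)(k+1+x)}$ shows $h'$ is positive and decreasing on $[0,\tfrac12]$, with $h'(\tfrac12)=2\log 2+\psi(\tfrac32)-\psi(1)=2$, so $(h')^2\ge 4$ throughout. On the other side, $\psi'(\tfrac12+x)-\psi'(1+x)=\sum_{k\ge0}\bigl((k+\tfrac12+x)^{-2}-(k+1+x)^{-2}\bigr)$ is also decreasing, with value $\psi'(\tfrac12)-\psi'(1)=\pi^2/2-\pi^2/6=\pi^2/3\approx 3.29$ at $x=0$. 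Since $\pi^2/3<4$, the inequality $(h')^2\ge -h''$ holds on all of $[0,\tfrac12]$, hence $f''=f\bigl((h')^2+h''\bigr)\ge0$ there, and the bound $4\log 2\le\gamma_n(1)-2n\le 2(\pi-1)$ follows. With that paragraph added your argument is complete; whether it coincides with Wang and Peng's original route I cannot verify from this paper, but it is a correct and self-contained proof.
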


In 2012, O.~N.~Kosukhin\cite{Kosukhin} gave the following upper bound for $L_n$, which improved on the then best results.

\begin{theorem}
For all $n\geq 2$, $L_n\leq\pi\left(n+\dfrac{25}{23}\right)+\pi\sqrt{\dfrac{n-1}{2}\ln(\pi n(n-1))}$.
\end{theorem}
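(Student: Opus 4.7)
The strategy is to parametrize $\Lambda_1(p)$ by the argument of $p$ and apply a H\"older-type inequality, with special care near critical values of $p$ that lie on the unit circle. First, writing $w = p(z) = e^{i\theta}$ on $\Lambda_1(p)$, one has $|dz| = d\theta/|p'(z)|$ away from critical points. Since $p$ has degree $n$, as $z$ traverses all components of $\Lambda_1(p)$ the total increment of $\theta$ equals $2\pi n$, yielding the identity
$$L := \ell(\Lambda_1(p)) = \int_{|w|=1}\sum_{z\in p^{-1}(w)}\frac{1}{|p'(z)|}\,|dw|.$$

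Second, I would apply H\"older's inequality with an exponent $q > 1$. Pointwise, there are generically $n$ preimages of $w$, so $(\sum_{z\in p^{-1}(w)}|p'(z)|^{-1})^q \leq n^{q-1}\sum_{z\in p^{-1}(w)}|p'(z)|^{-q}$, and H\"older applied to the integral over $|w|=1$ together with the change of variables $w = p(z)$ gives
$$L \leq (2\pi n)^{1-1/q}\left(\int_{\Lambda_1(p)}\frac{|dz|}{|p'(z)|^{q-1}}\right)^{1/q}.$$
The extremal case $q = 2$ (Cauchy--Schwarz) is sharp for $p(z) = z^n$ but breaks down whenever a critical value of $p$ lies on the unit circle, since the integral on the right then diverges; the freedom to take $q$ strictly less than $2$ is what repairs this defect.

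Third, and this is where the bulk of the work lies, one must bound $J_q := \int_{\Lambda_1(p)}|dz|/|p'(z)|^{q-1}$. Factoring $p'(z) = n\prod_{k=1}^{n-1}(z - \zeta_k)$ over the critical points $\zeta_k$ and using the local normal form $p(z) - p(\zeta_k) \sim c_k(z-\zeta_k)^{m_k}$ near each critical point, I would split $\Lambda_1(p)$ into the portion lying outside the union of disks $B(\zeta_k,\delta)$, on which $|p'|$ admits a uniform lower bound depending on $\delta$ and $n$, and the portion inside these disks, on which the integrand is singular but the local normal form produces an integrable singularity of order $\delta^{1/m_k}$. Optimizing the cutoff $\delta$ (on the order of $\sqrt{\log(n)/n}$) simultaneously with the H\"older exponent $q$ produces the claimed bound, with the correction term $\pi\sqrt{(n-1)\ln(\pi n(n-1))/2}$ emerging from the logarithmic integration near the critical set.

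The principal obstacle is this third step: obtaining control of $J_q$ that is uniform over all degree-$n$ polynomials, including the limiting cases where critical values coincide with points of $|w|=1$. The explicit constant $25/23$ arises from a delicate balance in the optimization, and a complete argument likely requires identifying (at least implicitly) the worst-case arrangement of the critical points of $p$ relative to $\Lambda_1(p)$ and then estimating $J_q$ in that configuration.
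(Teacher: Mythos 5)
This theorem is stated in the survey as a cited result of Kosukhin~\cite{Kosukhin}; no proof appears in the paper, so there is nothing to compare your argument against line by line. Judged on its own terms, your first two steps are sound and standard: the identity $\ell(\Lambda_1(p))=\int_0^{2\pi}\sum_{p(z)=e^{i\theta}}|p'(z)|^{-1}\,d\theta$ (valid for monic $p$ with the obvious care at critical values on $\mathbb{T}$) and the H\"older/power-mean combination giving $L\leq(2\pi n)^{1-1/q}\bigl(\int_{\Lambda_1(p)}|p'|^{1-q}\,|dz|\bigr)^{1/q}$ are both correct, and this is indeed the kind of starting point used in the known $O(n)$ bounds for $L_n$.

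The genuine gap is your third step, which is not a sketch of a proof but a restatement of the problem, and the one concrete mechanism you propose there fails quantitatively. Writing $p'(z)=n\prod_{k=1}^{n-1}(z-\zeta_k)$, the only lower bound you get for $|p'|$ on the part of $\Lambda_1(p)$ outside $\bigcup_k B(\zeta_k,\delta)$ from ``each factor is at least $\delta$'' is $|p'|\geq n\delta^{\,n-1}$, which for $\delta\sim\sqrt{\log n/n}$ is exponentially small in $n$; feeding this into $J_q$ yields a bound that is exponentially far from $\pi n+O(\sqrt{n\log n})$. The true enemy in the Erd\H{o}s--Herzog--Piranian problem is precisely that $|p'|$ can be small on long arcs of $\Lambda_1(p)$ that are not close to any single critical point (many moderately small factors, e.g.\ clustered zeros), so ``singularities localized at the $\zeta_k$ with integrable order $\delta^{1/m_k}$'' does not capture the worst case. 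Controlling $\int_{\Lambda_1(p)}|p'|^{1-q}\,|dz|$ uniformly over all monic degree-$n$ polynomials is essentially the entire content of the theorem, and it requires genuinely new input (in the literature this is done via Cartan-type exceptional-disk lemmas, area/capacity identities for $\{|p|<t\}$, or distribution-function estimates for $w\mapsto\sum_{p(z)=w}|p'(z)|^{-1}$), none of which appears in your outline. Finally, asserting that the constants $25/23$ and $\pi\sqrt{\tfrac{n-1}{2}\ln(\pi n(n-1))}$ ``emerge from the optimization'' is not a derivation; as written, the proposal establishes no explicit bound on $L_n$ at all.
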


In 2008, A.~Fryntov and F.~Nazarov~\cite{FryntovNazarov} showed that $p(z)=z^n+1$ locally maximizes the length of the level set $\Lambda_1(p)$, and provided another asymptotic upper-bound for maximal length $L_n=\ds\max_{\deg(p)=n}\ell(\Lambda_1(p))$.

\begin{theorem}
Let $n$ be a positive integer. There is some $\epsilon>0$ such that for any degree $n$ polynomial $p$, if the coefficients of $q(z)=p(z)-(z^n+1)$ are all smaller than $\epsilon$, then $$\ell(\Lambda_1(p))\leq\ell(\Lambda_1(z^n+1)).$$
\end{theorem}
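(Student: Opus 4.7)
The plan is to establish local maximality at $p_0(z) = z^n + 1$ by computing the first and second variations of the length functional in coefficient space. The starting point is the coarea-type representation
\[
\ell(\Lambda_1(p)) = \int_0^{2\pi} \sum_{p(z) = e^{i\theta}} \frac{d\theta}{|p'(z)|},
\]
where the inner sum runs over the $n$ preimages of $e^{i\theta}$ under $p$. For $p_0$ these preimages are $\omega^k(e^{i\theta}-1)^{1/n}$ with $\omega = e^{2\pi i/n}$, and the integrand reduces explicitly to $|e^{i\theta}-1|^{-(n-1)/n}$, which is integrable on $\mathbb{T}$ despite a singularity at $\theta = 0$ that reflects the fact that the unique critical value $p_0(0) = 1$ sits on the unit circle.

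Writing $p = p_0 + q$ and setting $p_t = p_0 + tq$, I would first verify that $\frac{d}{dt}\ell(\Lambda_1(p_t))\big|_{t=0}$ vanishes identically in $q$. Away from a small arc around $\theta = 0$, the preimages $z_k(\theta;t)$ depend smoothly on $t$ by the implicit function theorem, so the integrand can be differentiated directly. The rotational symmetry $p_0(\omega z) = p_0(z)$ then organizes the first-variation expression by character: perturbations invariant under $z \mapsto \omega z$ (those proportional to $1$ and $z^n$) correspond to rigid scalings for which stationarity is elementary, while the remaining components integrate to zero against $d\theta$ on the symmetric integrand.

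The heart of the argument, and the main obstacle, is the second-variation analysis near the critical point, where the critical values $v_j(t)$ of $p_t$ drift off $\mathbb{T}$ by amounts of order $t$ and smooth out the integrable singularity of the integrand. On a shrinking arc around each $\theta_j \in \mathbb{T}$ nearest to $v_j(t)/|v_j(t)|$, one would replace $p_t$ by the local normal form $\zeta^{m_j} + v_j(t)$ (with $m_j$ the multiplicity of the nearby critical point $c_j$) and estimate the change in length of the corresponding pieces of $\Lambda_1(p_t)$ to order $t^2$. Combined with the smooth second-variation contribution from the complementary region, one expects this to yield a strictly negative quadratic form in the coefficients of $q$, in the spirit of the Wang--Peng theorem above which establishes the analogous maximality among the real one-parameter family $z^n + r$. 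A compactness argument on the unit sphere in coefficient space then provides the required uniform $\epsilon > 0$.
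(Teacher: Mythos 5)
This theorem is quoted in the survey from Fryntov and Nazarov's paper; no proof of it appears here, so your proposal has to be judged on its own terms. As written it is a plan rather than a proof --- the step you yourself call ``the heart of the argument'' is left at the level of ``one would replace \ldots and estimate'' and ``one expects this to yield a strictly negative quadratic form'' --- but the more serious issue is that the plan's basic structure is wrong. The length functional is not differentiable at $p_0(z)=z^n+1$, because the critical value $p_0(0)=1$ lies exactly on the level circle, so the ``first variation vanishes, second variation is negative definite'' framework cannot be made to work. You can see this already from the Wang--Peng theorem you invoke: $\gamma_n(r)=\ell\left(\Lambda_1(z^n+r)\right)$ is nondecreasing and convex on $(0,1)$ and nonincreasing and convex on $(1,\infty)$; since $\gamma_n(r)\to 2\pi$ as $r\to 0^+$ while $\gamma_n(1)\geq 2n+4\log 2>2\pi$ for $n\geq 2$, the function is not constant on $(0,1)$, and convexity then forces the one-sided derivatives to satisfy $\gamma_n'(1^-)>0>\gamma_n'(1^+)$. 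So along the perturbation $q\equiv t$ the length has a genuine concave corner at $t=0$, not a stationary point: your claim that the first variation ``vanishes identically in $q$'' is false, and your remark that the directions $1$ and $z^n$ are ``rigid scalings for which stationarity is elementary'' is also false (adding a constant is not a rescaling of the lemniscate, and the rescaling $z^n\mapsto(1+t)z^n$ changes the length to first order --- which incidentally shows the theorem needs $p$ monic, i.e. $\deg q\leq n-1$, to be true at all).

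The actual mechanism of local maximality is therefore not a negative-definite second variation but a first-order, non-smooth, strictly negative contribution coming from the resolution of the self-intersection of the singular lemniscate at $z=0$ when the critical value moves off the unit circle; the crux of Fryntov and Nazarov's argument is showing that this singular term dominates the (generically nonzero) smooth first variation contributed by the rest of the curve, and that estimate is entirely absent from your sketch. Your coarea representation of the length and the computation of the integrand $|e^{i\theta}-1|^{-(n-1)/n}$ for $p_0$ are correct and a sensible starting point, and the closing compactness reduction to the unit sphere in coefficient space is standard, but between those two points the proposal both omits the main estimate and aims it at the wrong order of the expansion.
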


\begin{theorem}
$L_n\leq 2n+o(n)$.
\end{theorem}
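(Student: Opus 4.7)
The plan is to begin from the fundamental identity
$$\int_{\Lambda_1(p)} |p'(z)|\,|dz| \;=\; 2\pi n,$$
which can be derived either by applying Green's formula to $\log|p|$ on the filled lemniscate $\Omega = \{z : |p(z)| < 1\}$ (the outward normal derivative of $\log|p|$ on $\partial \Omega$ equals $|p'|$, and the distributional Laplacian of $\log|p|$ integrates to $2\pi n$ over $\Omega$) or by observing that $p$ realizes $\Lambda_1(p)$ as an $n$-sheeted branched cover of the unit circle. Since $\ell(\Lambda_1(p)) = \int_{\Lambda_1(p)}|dz|$, the task reduces to showing that $|p'|$ is, in a suitable weighted-average sense, at least $\pi - o(1)$ along $\Lambda_1(p)$. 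A preliminary reduction decomposes $\Lambda_1(p)$ into its connected components $\Lambda^{(1)},\ldots,\Lambda^{(k)}$ enclosing $n_1,\ldots,n_k$ zeros respectively (with $\sum n_j = n$): since the identity above decouples componentwise, it suffices to prove $\ell(\Lambda^{(j)}) \leq 2n_j + o(n_j)$ for each $j$ and sum.

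The first natural attempt is to fix a threshold $T \geq \pi$ and split $\Lambda_1(p) = G_T \sqcup B_T$ with $G_T = \{z \in \Lambda_1(p) : |p'(z)| \geq T\}$. On the good set, the identity immediately gives $\ell(G_T) \leq T^{-1}\cdot 2\pi n \leq 2n$ when $T = \pi$. Everything then hinges on showing that the bad set $B_T \subset \{|p'|<T\}$ has length $o(n)$. Since $p'$ is a polynomial of degree $n-1$, the open set $\{|p'|<T\}$ is itself (after rescaling) a filled lemniscate of a degree $n-1$ polynomial, whose boundary length is $O(n)$ by Kosukhin's bound; but this crude estimate is not sharp enough, and one must exploit the fact that $\Lambda_1(p)$ meets $\{|p'|<T\}$ only in short arcs near each critical point of $p$, together with the rigidity of $\Lambda_1(p)$ there.

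The principal obstacle --- and the main contribution of Fryntov and Nazarov --- is making this quantitative. The extremal polynomial $z^n+1$ shows the difficulty starkly: its unique critical point sits at the origin, where $|p'|$ vanishes and through which $\Lambda_1(p)$ actually passes, so its bad set is precisely where all of its length is concentrated, and one cannot hope to bound $\ell(B_T)$ by an absolute constant. My approach would be to bring in the companion area identity
$$\int_\Omega |p'(z)|^2\,dA(z) \;=\; \pi n,$$
which holds because $p : \Omega \to \mathbb{D}$ is a proper degree-$n$ map. Applied via a Cauchy--Schwarz argument together with the fact that $\overline{\Omega}$ has logarithmic capacity exactly $1$, this second identity constrains the distribution of $|p'|$, and combined with a symmetrization or local-perturbation comparison against $z^n+1$ --- making quantitative the local-maximizer theorem stated just above --- it should force any anomalously long bad part to demand a configuration of zeros which can be perturbed to increase the lemniscate length, contradicting the asymptotic extremality. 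The hardest step is turning the qualitative local-maximum statement into a uniform quantitative bound; this is where the bulk of the technical work, and the $o(n)$ error term, must come from.
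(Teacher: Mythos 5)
The paper is a survey and states this theorem without proof, citing Fryntov--Nazarov, so there is no in-paper argument to compare against; judged on its own terms, your proposal is an outline with the decisive step missing. Your two identities are correct ($\int_{\Lambda_1(p)}|p'|\,|dz|=2\pi n$ by Green's formula or the $n$-fold covering of $\mathbb{T}$, and $\int_\Omega|p'|^2\,dA=\pi n$ by properness of $p:\Omega\to\mathbb{D}$), and the good-set estimate $\ell(G_\pi)\le 2n$ follows immediately. But the entire content of the theorem is the claim $\ell(B_T)=o(n)$, and the mechanism you propose for it cannot work as stated. The local-maximizer theorem of Fryntov--Nazarov only says that $z^n+1$ beats polynomials in a small coefficient-neighborhood of itself; it gives no information about an arbitrary degree-$n$ polynomial whose bad set is long, and there is no ``asymptotic extremality'' of $z^n+1$ available to contradict --- global extremality is precisely the open Erd\H{o}s--Herzog--Piranian conjecture, and the $2n+o(n)$ upper bound you are trying to prove is itself the strongest known asymptotic statement. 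So the argument is circular at the one point where it needs to be quantitative.

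There is a second, more elementary flaw: the componentwise reduction. From $\ell(\Lambda^{(j)})\le 2n_j+o(n_j)$ for each component you cannot conclude $\ell(\Lambda_1(p))\le 2n+o(n)$ unless the error terms are uniform in a strong sense. If the lemniscate has $\sim n$ components each enclosing one zero, then $n$ error terms each bounded by an absolute constant already sum to $\Theta(n)$, which is fatal here because the main term is only $2n$; indeed for a single simple zero the first identity gives $\int|p'|=2\pi$ over that component but places no upper bound on its length without further input. Any correct proof must control the total bad length globally (e.g.\ by summing contributions over the $n-1$ critical points of $p$ with a bound that is $o(1)$ \emph{per critical point} on average, or by an integral-geometric argument over the whole level set), rather than component by component. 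As it stands, the proposal correctly identifies where the difficulty lies but does not supply the idea that resolves it.
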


\subsection{Regions Bounded by Lemniscates}

We now turn to recent area results for lemniscates. In this section, we use the notation $\Lambda_\epsilon(p)$ to denote also the region circumscribed by the lemniscate $\Lambda_\epsilon(p)$. In 2007, H.~H.~Cuenya and F.~E.~Levis~\cite{CuenyaLevis} proved the following. In the following, for $r>0$ let $M_r$ denote the collection of polynomials for which the minimum distance between any two distinct zeros of $p$ is at least $r$ times the diameter of the zero set of $p$.

\begin{theorem}\label{conj: Disk in lemniscate.}
For any $r>0$, there is a constant $C>0$ such that for any $s>0$ and any polynomial $p\in M_r$, the region $\Lambda_s(p)$ contains a disk $D$ with area at least $m(D)\geq \dfrac{m\left(\Lambda_s(p)\right)}{C}$.
\end{theorem}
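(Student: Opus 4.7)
The plan is to combine a packing bound on the degree with scale-normalization and then a component-by-component analysis using the fact that each lemniscate component is a proper branched cover of a disk.

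First, I would observe that $p \in M_r$ forces $\deg(p) \leq N(r)$ for some $N(r) = O(1/r^2)$ depending only on $r$: the zeros of $p$ lie in a disk of radius $\diam/2$ while being pairwise separated by at least $r\diam$, so a standard area-packing argument bounds their number. Next, the ratio $m(D_{\max})/m(\Lambda_s(p))$ (where $D_{\max}$ denotes the largest disk contained in $\Lambda_s(p)$) is invariant under the substitution $p(z) \mapsto a \cdot p(\lambda z + \beta)$ together with a suitable rescaling of $s$. So we may normalize $p$ to be monic with zero set of diameter $1$ and centroid at $0$, placing $p$ in a compact finite-dimensional family $\mathcal{P}_r$ of polynomials of degree at most $N(r)$.

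Second, I would reduce to a single-component estimate. Since $\Lambda_s(p)$ has at most $\deg(p) \leq N(r)$ components, its largest component $E$ satisfies $m(E) \geq m(\Lambda_s(p))/N(r)$, so it suffices to prove a uniform inequality $m(D_{\max}(E)) \geq m(E)/C'$ for every component $E$ arising from some $p \in \mathcal{P}_r$ and some $s > 0$. The key structural fact here is that if $E$ contains zeros of total multiplicity $k$, then by the argument principle $p|_E : E \to \overline{D(0,s)}$ is a proper holomorphic map of degree $k$, and the inclusion $\Lambda_s(p) \subseteq \bigcup_j D(z_j, s^{1/n})$ (immediate from $\min_j |z - z_j| \leq |p(z)|^{1/n}$) controls the diameter of $E$. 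In the two easy regimes these give the claim directly: for small $s$, a Taylor expansion of $p$ at each zero $z_j \in E$ produces an inscribed disk of radius $\sim s/|p'(z_j)|$ whose area is comparable to $m(E)$ (the factors $|p'(z_j)| = \prod_{i \neq j} |z_j - z_i|$ lie in a uniform compact interval on $\mathcal{P}_r$); for large $s$, $\Lambda_s(p)$ is a single region approximating the round disk $\{|z| \leq s^{1/n}\}$ and the claim is immediate.

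The main obstacle is the intermediate regime $s_0(r) \leq s \leq s_1(r)$, where the topology of $\Lambda_s(p)$ changes as $s$ passes through critical values of $p$ and components may merge into pinched ``peanut'' shapes whose inscribed-disk-to-area ratio is delicate. To handle this uniformly I would invoke compactness of the product $\mathcal{P}_r \times [s_0, s_1]$: one verifies that the function $g(p, s) = m(D_{\max}(\Lambda_s(p)))/m(\Lambda_s(p))$ is strictly positive at every $(p, s)$ and is jointly lower semicontinuous, so it attains a positive minimum on this compact set. The technical heart of the proof lies in establishing this lower semicontinuity and bounding the local geometry near a critical point $w$ with $|p(w)| = s$; here an explicit normal-form expansion $p(z) = p(w) + c(z - w)^\ell + \cdots$ together with the separation hypothesis $p \in M_r$ controls the width of the pinch and rules out arbitrarily thin necks accompanied by disproportionately large lobes.
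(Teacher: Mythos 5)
The paper itself does not prove this theorem; it is a survey and simply quotes the result from Cuenya--Levis~\cite{CuenyaLevis}, so there is no in-paper argument to compare against. Judged on its own terms, your proposal has a genuine gap at its foundation: the claim that $p\in M_r$ forces $\deg(p)\leq N(r)$ is false. The separation hypothesis defining $M_r$ constrains only the \emph{distinct} zeros of $p$; multiplicities are unconstrained. For example, $p_m(z)=(z^2-1)^m$ lies in $M_1$ for every $m$ and has degree $2m$. The packing argument bounds the number of distinct zeros by $O(1/r^2)$, not the degree. Consequently your normalized family $\mathcal{P}_r$ is not a compact finite-dimensional family, and the compactness argument that carries the intermediate regime --- which you yourself identify as the technical heart of the proof --- collapses. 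The same issue infects the small-$s$ regime: you write $|p'(z_j)|=\prod_{i\neq j}|z_j-z_i|$, which presumes all zeros are simple, and with multiplicities $n_i$ the relevant quantity $\prod_{i\neq j}|z_j-z_i|^{n_i}$ does not stay in a fixed compact interval as the $n_i$ grow.

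The reduction to the largest component does survive (there are at most $N(r)$ components, since each component of $\{|p|\le s\}$ contains a distinct zero), and the compactness strategy can in principle be repaired: writing $|p(z)|\le s$ as $\sum_j (n_j/n)\log|z-z_j|\le n^{-1}\log s$ shows that the sublevel sets depend only on the distinct zeros, the probability vector $(n_j/n)$, and a rescaled level, and the simplex of weight vectors supported on at most $N(r)$ points is compact. But the limit points of this enlarged family are sublevel sets of weighted potentials $\sum_j\lambda_j\log|z-z_j|$ with arbitrary real weights, i.e.\ generalized lemniscates that are not lemniscates of polynomials, so you would then have to establish positivity and lower semicontinuity of the inscribed-disk ratio for that larger class, as well as uniformity of your small-$s$ and large-$s$ thresholds over it. None of this is addressed in the proposal, so as written the argument does not close.
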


Cuenya and Levis conjectured that the suitability condition $p\in M_r$ can be removed from the statement of Theorem~\ref{conj: Disk in lemniscate.}, and proved this in the special case that $p$ has at most three distinct zeros. In 2009, A.~Y.~Solynin and A.~S.~Williams~\cite{SolyninWilliams} established Cuenya and Levis' conjecture, but with dependence in the constant $C$ on the degree of the polynomial $p$.

The following theorem, published by P.~Ding in 2018~\cite{Ding}, relates the area of the region between two lemniscates, the lengths of the two lemniscates, and the curvature of the interceding lemniscates. Let $p$ be a complex polynomial. For $0<r<s$, let $\lambda_r$ and $\lambda_s$ be components of the lemniscates $\Lambda_r(p)$ and $\Lambda_s(p)$, such that $\lambda_r$ lies in a bounded component of ${\lambda_s}^c$. For any $t\in(r,s)$, let $\lambda_t$ denote the components of $\Lambda_t(p)$ which lie in the region $D$ between $\lambda_r$ and $\lambda_s$. Finally, let $\kappa(z)$ denote the curvature at $z$ of the lemniscate of $p$ containing $z$.

\begin{theorem}
Given the notation in the preceding paragraph, the following holds.

\begin{itemize}
\item
The area of $D$ is $\ds\int_r^s\left(\int_{z\in\lambda_t}\dfrac{1}{|p'(t)|}|dz|\right)dt$.

\item
$\ds\int_r^s\ell(\lambda_t)dt=\iint_D|p'(z)|dA$.

\item
$\ell(\lambda_s)=\ell(\lambda_r)+\ds\iint_D\kappa(z)dA$.
\end{itemize}

\end{theorem}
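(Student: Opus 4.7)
The three statements are really two different phenomena: parts (1) and (2) are applications of the coarea (smooth coarea / layer-cake) formula, while part (3) is the first variation of arc length applied to the family $\{\lambda_t\}_{t\in[r,s]}$, combined with part (1). My plan is to organize the proof around the identity $|\nabla|p(z)||=|p'(z)|$, which is the bridge that makes the coarea formula yield each of the three stated expressions.

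The first step is to establish that identity on $D$. Writing $p=u+iv$, the Cauchy--Riemann equations give $\nabla u\cdot\nabla v=u_xv_x+u_yv_y=0$ and $|\nabla u|^2=|\nabla v|^2=|p'|^2$. Differentiating $|p|^2=u^2+v^2$ yields $|p|\,\nabla|p|=u\nabla u+v\nabla v$, and combining the two observations produces $|p|^2\,|\nabla|p||^2=(u^2+v^2)|p'|^2$, so $|\nabla|p(z)||=|p'(z)|$ wherever $p(z)\neq 0$; on $D$ this is automatic since $r>0$. Next, the coarea formula applied to $h(z)=|p(z)|$ states that for any integrable $f$,
\[
\iint_D f(z)\,|\nabla h|\,dA \;=\; \int_r^s\!\!\int_{\lambda_t} f(z)\,|dz|\,dt.
\]
Taking $f\equiv 1/|p'(z)|$ gives part (1), and taking $f\equiv 1$ gives part (2) directly; thus the first two bullets are reduced to a single application once the gradient identity is in hand.

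For part (3), the plan is to differentiate $\ell(\lambda_t)$ in $t$ using the classical first variation formula. The family $\lambda_t$ moves with outward normal velocity $V(z)=1/|\nabla|p(z)||=1/|p'(z)|$, and with the sign convention in which $\kappa$ is the signed curvature taken relative to the outward normal, the first variation of arc length for a smooth family of closed curves gives
\[
\frac{d}{dt}\,\ell(\lambda_t)\;=\;\int_{\lambda_t}\kappa(z)\,V(z)\,|dz|\;=\;\int_{\lambda_t}\frac{\kappa(z)}{|p'(z)|}\,|dz|.
\]
Integrating from $r$ to $s$ and applying the coarea formula once more (this time with $f=\kappa$) converts the right--hand side to $\iint_D \kappa(z)\,dA$, which is exactly the claimed identity $\ell(\lambda_s)-\ell(\lambda_r)=\iint_D \kappa(z)\,dA$.

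The main technical obstacle is the possibility that $p$ has a critical value inside $[r,s]$: at such $t$ the lemniscate $\lambda_t$ fails to be smooth (a figure-eight or more complicated crossing appears), so both the coarea formula in the pointwise form above and the first variation of arc length require justification. I would handle this by noting that critical values of $p$ form a finite set, removing small neighborhoods of them from $[r,s]$ to obtain a finite union of subintervals on which $\lambda_t$ is a smooth finite union of disjoint simple closed curves, carrying out the two coarea computations and the variational identity on each such subinterval, and then passing to the limit: the contributions near critical values vanish because $1/|p'|$ is integrable against arc length near an isolated zero of $p'$ (polar estimates show $|p'(z)|\gtrsim |z-z_0|^{k-1}$ near a zero of order $k-1$, and $\kappa$ behaves like $|z-z_0|^{-1}$, both of which are locally integrable in the relevant sense after the factor of $1/|p'|$). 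The continuity of $t\mapsto\ell(\lambda_t)$ (compare Theorem~\ref{thm: Lemniscate growth.}) lets one patch the pieces back together.
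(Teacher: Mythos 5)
This is one of the survey's quoted results (Ding, 2018); the paper itself gives no proof of it, so there is nothing internal to compare your argument against. On its own terms, your proof is essentially correct and is the natural argument: the identity $\left|\nabla|p|\right|=|p'|$ away from the zeros of $p$, the coarea formula for $h=|p|$ with the two choices of integrand, and the first variation of arc length for the level-set flow with normal speed $1/|p'|$ together yield all three bullets, and your treatment of the finitely many critical levels (excise, estimate, use integrability of $|z-z_0|^{-1}$ in the plane and continuity of $t\mapsto\ell(\lambda_t)$) is the right way to handle the singular lemniscates. Two small points. First, in the last step of part (3) the coarea formula should be applied with $f=\kappa/|p'|$, not $f=\kappa$: one needs $\int_r^s\int_{\lambda_t}\frac{\kappa}{|p'|}\,|dz|\,dt=\iint_D\frac{\kappa}{|p'|}\,|p'|\,dA=\iint_D\kappa\,dA$; as written your application would produce $\iint_D\kappa\,|p'|\,dA$. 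Second, your remark that $p\neq 0$ on $D$ ``is automatic since $r>0$'' silently assumes that $r<|p|<s$ throughout $D$, i.e.\ that $\lambda_r$ is the only component of $\Lambda_r(p)$ enclosed by $\lambda_s$; if $\lambda_s$ encloses other components of $\Lambda_r(p)$, then $D$ contains points where $|p|<r$ (including possibly zeros of $p$) and the first two identities fail as stated. This is really an implicit hypothesis of the theorem rather than a defect of your argument, but it should be stated explicitly, since the whole proof rests on $D$ being exactly the set swept out by the levels $t\in(r,s)$.
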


\subsection{Area of and Roundness of the Preimage Under a Polynomial}

In 2004, E.~Crane~\cite{Crane} proved the following results regarding the preimages of measurable sets in the plane under a complex polynomial.

\begin{theorem}
Let $K\subset\mathbb{C}$ be measurable, and let $p$ be a complex polynomial with degree $n$. Then $$m\left(p^{-1}(K)\right)\leq\pi\left(\dfrac{m(K)}{\pi}\right)^{1/n}.$$
\end{theorem}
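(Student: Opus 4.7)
The plan is to combine Pólya's classical area estimate for polynomial sublevel sets with an area-formula / rearrangement argument. First, dividing $p$ by its leading coefficient (and rescaling $K$ accordingly) reduces to the case $p$ monic. Pólya's inequality applied to the monic polynomial $p - w_0$ of degree $n$ then yields
$$m\left(\{z \in \mathbb{C} : |p(z) - w_0| \le r\}\right) \le \pi r^{2/n}$$
for every $w_0 \in \mathbb{C}$ and $r > 0$, which is precisely the desired inequality in the special case $K = \overline{D(w_0, r)}$ is a disk.

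For general measurable $K$, I would use the area formula to write $m(p^{-1}(K)) = \int_K g(w)\, dm(w)$, where
$$g(w) = \sum_{z : p(z) = w} \frac{1}{|p'(z)|^2}$$
is the Radon--Nikodym density of the pushforward measure $p_* m$ with respect to Lebesgue. In these terms, the disk case established above reads $\int_D g\, dm \le \pi(m(D)/\pi)^{1/n}$ for every disk $D$, and the theorem asserts the same inequality over arbitrary measurable $K$.

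The technical heart of the proof is to upgrade the disk-bound to the arbitrary-$K$ bound. A naive union bound over a covering of $K$ by disks is insufficient, since the isoperimetric profile $\varphi(m) := \pi(m/\pi)^{1/n}$ is strictly sub-additive. To obtain the sharp estimate, I would exploit the additional analytic structure of the density $g$: it is subharmonic off the finitely many critical values of $p$, and satisfies $g(w) \sim 1/(n|w|^{2-2/n})$ as $|w|\to\infty$ (the extremal density realized by $p(z)=z^n$). A natural route is to apply a circular symmetrization of $K$ about the origin and invoke the Hardy--Littlewood rearrangement inequality to reduce to the extremal case $K = \overline{D(0, R)}$, where Pólya applies directly. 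The main obstacle, and the step I expect to require the most care, is establishing the correct majorization for the decreasing rearrangement of $g$; the usual rearrangement inequalities are stated against a fixed background density, whereas here $g$ itself depends on the polynomial $p$, and one must leverage the complex-analytic structure of $p$ as a branched covering of $\mathbb{C}$. Equality throughout is attained for $p(z) = z^n$ with $K = \overline{D(0, R)}$, confirming the sharpness of the constant.
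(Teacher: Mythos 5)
This theorem is one the survey quotes from Crane's 2004 paper without proof, so there is no in-paper argument to compare against; I will assess your sketch on its own terms. The disk case is fine: Pólya's inequality applied to $p-w_0$ gives $m\left(p^{-1}\left(\overline{D(w_0,r)}\right)\right)\leq\pi r^{2/n}$, which is the theorem for $K$ a disk. The genuine gap is the step you yourself flag as the ``technical heart'': the passage from disks to general $K$ via rearrangement. After Hardy--Littlewood, what you need is $\int_0^{m}g^*(t)\,dt\leq\pi(m/\pi)^{1/n}$ for all $m$, where $g^*$ is the decreasing rearrangement of $g(w)=\sum_{p(z)=w}|p'(z)|^{-2}$. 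But that inequality \emph{is} the theorem: the sets $K$ maximizing $\int_K g$ at fixed measure are exactly the superlevel sets of $g$, so the proposed majorization is equivalent to the general statement rather than a reduction of it. Circular symmetrization about the origin is also not the right tool here: $g$ blows up at the critical values of $p$ (like $|w-v_j|^{-1}$ at a simple critical value), and these can be placed anywhere in the plane; subharmonicity of $g$ off finitely many points gives no control on the measure of its superlevel sets. A secondary issue: the reduction to monic $p$ does not work as claimed. If $p=aq$ with $q$ monic, then $m(a^{-1}K)=m(K)/|a|^2$, and the resulting bound $\pi\left(m(K)/(|a|^2\pi)\right)^{1/n}$ exceeds the claimed one precisely when $|a|<1$; indeed the theorem is false without a monicity (or $|a|\geq1$) hypothesis, as $p(z)=\epsilon z$, $K=\mathbb{D}$ already shows. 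The survey's statement silently omits this hypothesis, but your sketch cannot repair it by rescaling.

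The gap can be closed by a completely elementary argument that avoids symmetrization, and which is essentially Crane's. For monic $p$ of degree $n\geq2$, apply Pólya's inequality to $p'/n$ (monic of degree $n-1$) to get $m\left(\{|p'|^2\leq u\}\right)\leq\pi(u/n^2)^{1/(n-1)}$. The layer-cake (bathtub) principle then gives, for any measurable $E$ with $m(E)=a$,
$$\int_E|p'|^2\,dm\;\geq\;\int_0^{\bar{s}}\left(a-\pi\left(\frac{u}{n^2}\right)^{1/(n-1)}\right)du\;=\;n\pi\left(\frac{a}{\pi}\right)^{n},\qquad \bar{s}=n^2\left(\frac{a}{\pi}\right)^{n-1}.$$
On the other hand, the change-of-variables formula gives exactly $\int_{p^{-1}(K)}|p'|^2\,dm=n\,m(K)$, since almost every $w\in K$ has precisely $n$ preimages, all lying in $p^{-1}(K)$. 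Combining the two displays with $E=p^{-1}(K)$ yields $m(K)\geq\pi\left(m\left(p^{-1}(K)\right)/\pi\right)^{n}$, which is the theorem. Note that this route uses Pólya's inequality for the \emph{derivative} rather than for $p$ itself, and replaces the delicate rearrangement of the fiber density $g$ by the trivial identity for $\int|p'|^2$ over a full preimage; I would encourage you to rewrite the argument along these lines.
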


If the logarithmic capacity of $K\subset\mathbb{C}$ is denoted $cap(K)$, and the roundness of $K$ is defined to be $\rho(K)=\dfrac{m(K)}{\pi cap(K)}$, Crane also proved the following.

\begin{theorem}
Let $K\subset\mathbb{C}$ be measurable, and let $p$ be a complex polynomial with degree $n$. Then $$\rho\left(p^{-1}(K)\right)=\rho(K)^{1/n}.$$
\end{theorem}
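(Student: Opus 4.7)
The plan is to derive the identity from two separate transformation laws — one for logarithmic capacity and one for Lebesgue area — each of which transforms by an $n$-th-root rule under preimage by a degree-$n$ polynomial. Since $\rho = m/(\pi\,\operatorname{cap})$ is a ratio of these two quantities, the exponents line up cleanly, and the identity will fall out by algebra.

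The first step is a reduction to the monic case. Writing $p(z) = az^n + \cdots$ and replacing $p$ by $p/a$ replaces $K$ by $K/a$; since $m(K/a) = |a|^{-2}m(K)$ and $\operatorname{cap}(K/a) = |a|^{-1}\operatorname{cap}(K)$, the effect on $\rho$ is a clean rescaling which one checks is compatible with taking $n$-th roots on both sides of the desired identity. So I may assume $p$ is monic for the rest of the argument.

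Next I would establish the capacity half of the identity: for a monic polynomial $p$ of degree $n$ and compact $K\subset\mathbb{C}$, $\operatorname{cap}(p^{-1}(K)) = \operatorname{cap}(K)^{1/n}$. This is a classical fact of logarithmic potential theory. The slick proof is to let $g_K$ denote the Green's function of the unbounded component of $\mathbb{C}\setminus K$ with pole at infinity, normalized so that $g_K(z) = \log|z| - \log\operatorname{cap}(K) + o(1)$ as $z\to\infty$, observe that $\tfrac{1}{n}\,g_K\circ p$ is the corresponding Green's function for $p^{-1}(K)$ (it has the right harmonicity, boundary values, and a logarithmic pole at $\infty$), and then read off the Robin constant using $|p(z)| = |z|^n(1+o(1))$.

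The area half, $m(p^{-1}(K)) = \pi(m(K)/\pi)^{1/n}$, is supplied by the preceding theorem. Assembling,
\[
\rho(p^{-1}(K)) = \frac{m(p^{-1}(K))}{\pi\operatorname{cap}(p^{-1}(K))} = \frac{\pi(m(K)/\pi)^{1/n}}{\pi\operatorname{cap}(K)^{1/n}} = \left(\frac{m(K)}{\pi\operatorname{cap}(K)}\right)^{1/n} = \rho(K)^{1/n}.
\]
The main obstacle is the capacity transformation — it is the only step that is not a formal algebraic manipulation of the two ingredients, and it depends entirely on the potential-theoretic fact that pulling the equilibrium Green's function back along a monic polynomial of degree $n$ divides its Robin constant by $n$.
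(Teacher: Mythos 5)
The decisive flaw is the sentence asserting that ``the area half, $m(p^{-1}(K)) = \pi(m(K)/\pi)^{1/n}$, is supplied by the preceding theorem.'' The preceding theorem supplies only the one--sided bound $m(p^{-1}(K)) \leq \pi(m(K)/\pi)^{1/n}$, and equality genuinely fails in general: take $p(z)=z^2$ and let $K$ be the disk of radius $\epsilon$ centered at a large real $R$. Then $m(K)=\pi\epsilon^2$, so the right-hand side is $\pi\epsilon$, while $p^{-1}(K)$ consists of two near-disks of radius roughly $\epsilon/(2\sqrt{R})$ and has area roughly $\pi\epsilon^2/(2R)$, which is far smaller than $\pi\epsilon$. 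Since your argument combines an exact transformation law for capacity with what is in truth only an inequality for area, the most it can yield is $\rho(p^{-1}(K))\leq\rho(K)^{1/n}$. That inequality is in fact Crane's actual result; the equality printed in the survey cannot be correct as stated (in the same example, with the survey's definition of $\rho$, one gets $\rho(p^{-1}(K))\approx\epsilon^{3/2}/(2R)$ versus $\rho(K)^{1/2}=\epsilon^{1/2}$). The capacity half of your argument --- $\operatorname{cap}(p^{-1}(K))=\operatorname{cap}(K)^{1/n}$ for monic $p$, proved by pulling back the Green's function with pole at infinity and reading off the Robin constant --- is correct and is exactly the right ingredient; the survey itself gives no proof to compare against, as the theorem is quoted from Crane's paper.

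A secondary problem is that your monic reduction is inconsistent with the identity you are trying to prove under the survey's definition $\rho(K)=m(K)/(\pi\operatorname{cap}(K))$. Replacing $p$ by $p/a$ leaves the set $p^{-1}(K)=(p/a)^{-1}(K/a)$ unchanged but replaces $K$ by $K/a$, and $\rho(K/a)=|a|^{-1}\rho(K)$, so the claimed identity cannot hold simultaneously for $p$ acting on $K$ and for $p/a$ acting on $K/a$ unless $|a|=1$; the ``clean rescaling'' you invoke does not in fact cancel. This is a further indication that the intended roundness is the scale-invariant quantity $\rho(K)=m(K)/\bigl(\pi\operatorname{cap}(K)^2\bigr)$, which equals $1$ for disks; with that definition your reduction to the monic case and your final algebra go through verbatim and, combined with the corrected area step, deliver the inequality $\rho(p^{-1}(K))\leq\rho(K)^{1/n}$ rather than the printed equality.
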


\subsection{Lengths of Lemniscates of Random Polynomials}

In 2017, E.~Lundberg and K.~Ramachandran\cite{LundbergRamachandran} studied the lengths of the lemniscates $\Lambda_1(p_n)$, for a random sequence polynomials $\{p_n\}$, proving the following.

\begin{theorem}
Let $\{p_n\}$ be a sequence of complex polynomials, where the coefficients of $p_n(z)=\ds\sum_{j=0}^nc_jz^j$ are chosen i.i.d. with the standard Gaussian density $\dfrac{1}{\pi}exp\left(-|z|^2\right)$. Then $$\ds\lim_{n\to\infty}\mathbb{E}\ell(\Lambda_1(p_n))=C,$$

where $C$ is a constant defined by an integral, numerically determined to be $C\approx8.3882$.
\end{theorem}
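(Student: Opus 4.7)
The plan is to apply the Kac--Rice formula for the expected length of a level set of a smooth random field, together with a careful analysis of the $n\to\infty$ limit of the resulting integral. For a $C^1$ function $f:\mathbb{C}\to[0,\infty)$ with generic level sets, the coarea formula gives
$$\ell(\{f=1\}) \;=\; \int_{\mathbb{C}} |\nabla f(z)|\,\delta(f(z)-1)\,dA(z),$$
and taking expectations with $f(z)=|p_n(z)|$ yields the Kac--Rice integral
$$\mathbb{E}\,\ell\bigl(\Lambda_1(p_n)\bigr) \;=\; \int_{\mathbb{C}} \mathbb{E}\bigl[\,|\nabla|p_n(z)||\ \text{given}\ |p_n(z)|=1\bigr]\,f_{|p_n(z)|}(1)\,dA(z).$$
To make this explicit, observe that $(p_n(z),p_n'(z))$ is jointly complex Gaussian with mean $0$, and its covariance is an elementary function of $|z|^2$; in particular $\mathbb{E}|p_n(z)|^2 = \sum_{j=0}^n |z|^{2j}$, so $|p_n(z)|$ has a Rayleigh distribution and $f_{|p_n(z)|}(1)$ is explicit. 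The conditional distribution of $p_n'(z)$ given $p_n(z)$ is Gaussian, which lets the conditional expectation inside the integral be written in closed form; and radial symmetry then reduces the surface integral to a one-dimensional integral $2\pi\int_0^\infty r\,G_n(r)\,dr$ for an explicit integrand $G_n$.

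Next I would pass to the limit $n\to\infty$. Inside the unit disk, $\mathbb{E}|p_n(z)|^2\to 1/(1-|z|^2)$ and $p_n$ converges in law to the hyperbolic Gaussian analytic function $p_\infty$ on $\mathbb{D}$ with covariance kernel $(1-z\bar w)^{-1}$; correspondingly $G_n(r)\to G_\infty(r)$ for $r<1$, the Kac--Rice integrand for the limit field. Outside the unit disk, $\mathbb{E}|p_n(z)|^2$ grows like $|z|^{2n}$, so the Rayleigh density $f_{|p_n(z)|}(1)$ is exponentially small in $n$ and the bulk contribution from $r>1$ vanishes. A boundary layer of width $O(1/n)$ about $|z|=1$ requires the rescaling $r=1+u/n$, under which the rescaled field has a stationary Gaussian limit whose Kac--Rice integrand contributes a finite additional term.

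The main obstacle, I expect, is justifying the interchange of the $n\to\infty$ limit with the integral, since $G_n$ develops a spike of growing height but shrinking width near $r=1$. I would split the radial integral into three pieces: (i) $r\in[0,1-\delta]$, handled by dominated convergence against $G_\infty$; (ii) the boundary window $r\in[1-\delta,1+\delta]$, handled by the $O(1/n)$ rescaling together with an explicit computation for the stationary limit field; and (iii) $r\geq 1+\delta$, where uniform Rayleigh tail estimates give exponential decay to zero. Adding the limiting contributions from (i) and (ii) produces the integral expression for the constant $C$, whose numerical evaluation yields $C\approx 8.3882$.
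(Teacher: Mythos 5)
You should know at the outset that this survey does not prove the statement: it is quoted from Lundberg and Ramachandran \cite{LundbergRamachandran}, so there is no in-paper argument to compare against, and I am measuring your proposal against the proof in that source. Your strategy is the right one and is essentially theirs: the coarea/Kac--Rice representation
$$\mathbb{E}\,\ell\bigl(\Lambda_1(p_n)\bigr)=\int_{\mathbb{C}}\mathbb{E}\bigl[\,|p_n'(z)|\,\mid\,|p_n(z)|=1\,\bigr]\,f_{|p_n(z)|}(1)\,dA(z)$$
(using that $|\nabla|p_n||=|p_n'|$ off the zero set), explicit Gaussian conditioning via the covariance $\mathbb{E}\,p_n(z)\overline{p_n(w)}=\sum_{j=0}^n(z\bar w)^j$, rotational invariance to reduce to a radial integral, convergence to the hyperbolic Gaussian analytic function with kernel $(1-z\bar w)^{-1}$ inside the disk, and a separate estimate showing the exterior contributes nothing.

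There are, however, two gaps. First, your boundary-layer prediction is wrong in an instructive way: at $|z|=1-s/n$ with $s=O(1)$ one has $\mathbb{E}|p_n(z)|^2\asymp n$ and $\mathbb{E}|p_n'(z)|^2\asymp n^3$, so the Kac--Rice density there is $O(n^{1/2})$ while the window has area $O(1/n)$; the $O(1/n)$-neighborhood of the unit circle therefore contributes $O(n^{-1/2})\to 0$, not ``a finite additional term.'' The constant $C$ is exactly the Kac--Rice integral of the limiting hyperbolic GAF over the open unit disk, with no extra boundary contribution; a proof organized around extracting a nonzero stationary-limit term at the circle would be chasing something that is not there. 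Second, the actual mathematical content of the theorem is precisely what your outline defers: the uniform domination near $|z|=1$ that licenses interchanging the limit with the integral (the limiting radial density behaves like $(1-r^2)^{-1/2}$, integrable but unbounded, so the dominating function must be produced, not merely invoked), and the verification of the Kac--Rice hypotheses (nondegeneracy of the Gaussian vector $(p_n(z),p_n'(z))$, almost-sure regularity of the level set) that turn the formal $\delta$-function manipulation into a theorem. As written the proposal also produces neither the closed-form integrand nor the value $8.3882$. The plan is sound and matches the published proof in outline, but it is a plan rather than a proof.
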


\subsection{The Lemniscate Tree of a Polynomial}

In 1991, F.~Catanese and M.~Paluszny~\cite{CatanesePaluszny} published an article exploring the topological configuration of all the lemniscates of a complex polynomial. It follows directly from the maximum modulus principle that the non-critical (also called non-singular) lemniscates (ie. those not containing a critical point) of a polynomial interpolate smoothly between any two critical (or singular) lemniscates, and conversely that if any two critical lemniscates are incomparable (in the sense that neither lies in a bounded component of the complement of the other), then these two critical lemniscates lie in different bounded components of the complement of some third critical lemniscate. It follows that the topology of the graph $y=|p(z)|$ may be entirely determined by knowing the configuration of only the critical lemniscates. To each such configuration, Catanese and Paluszny associated a tree, whose nodes represent the distinct critical points and zeros of the polynomial, with an edge between two nodes $a$ and $b$ if $a$ represents a non-trivial critical point, and the zero or critical point of $p$ which is represented by $b$ lies in a bounded face of the critical lemniscate containing the critical point represented by $a$, or vice versa. Catanese and Paluszny showed that there is a bijection between the collection of simple central balanced binary trees and the connected components of the space of lemniscate-generic complex polynomials (ie. those complex polynomials all of whose critical values have different moduli).

In forthcoming work, M.~Epstein~et.~al.~\cite{EpsteinHaninLundberg} analyzed the lemniscate tree of random polynomials. They established the following theorem, where $LT_n$ denotes the collection of generic lemniscate trees with $n$ leaves (ie. those trees corresponding to lemniscate-generic degree-$n$ complex polynomials). They additionally identified the out-degree of a node in a lemniscate tree as $0$ if the node represents a zero of the polynomial, and $2$ if the node represents a non-trivial critical point of the polynomial.

\begin{theorem}
Let $\{T_n\}\subset LT_n$ be a sequence of of lemniscate trees sampled uniformly at random, and let $X_n$ denote the number of vertices in $T_n$ of out-degree two. Let $\mu_n$ and $\sigma_n$ denote the mean and standard deviation of $X_n$. Then $$\mu_n=\left(1-\dfrac{2}{\pi}\right)n+O(1)\text{ and }\sigma_n^2=\left(\dfrac{4}{\pi^2}+\dfrac{2}{\pi}-1\right)n+O(1).$$

Moreover, $\sigma_n^{-1}(X_n-\mu_n)$ converges in distribution to a standard Gaussian random variable as $n\to\infty$.

\end{theorem}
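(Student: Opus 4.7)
The plan is to treat this as a problem in analytic combinatorics on a family of rooted plane trees. First, using the Catanese--Paluszny bijection between lemniscate-generic polynomials and simple central balanced binary trees, I would recast $LT_n$ as a concrete combinatorial class: the $n$ leaves correspond to zeros, internal nodes correspond to non-trivial critical points, and the out-degree-two vertices are precisely those critical points whose figure-eight critical lemniscate encloses nontrivial sub-lemniscates in both loops. The first essential step is to extract from the ``simple central balanced'' condition a clean recursive decomposition, showing that a lemniscate tree is either a single leaf or a joining of two admissible sub-trees across a new internal node, subject to the balance constraint.

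With such a decomposition in hand, I would encode the counting problem via the bivariate generating function
\[
T(z,u) \;=\; \sum_{T \in LT} z^{|T|}\, u^{X(T)},
\]
where $|T|$ is the number of leaves and $X(T)$ is the number of out-degree-two vertices. The recursion should translate into a polynomial (or at worst algebraic) functional equation of the shape $T(z,u) = z + u\,\Phi\bigl(T(z,u)\bigr)$, where $\Phi$ encodes the admissible pairings forced by the balance condition.

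From there I would follow the Flajolet--Sedgewick playbook. The occurrence of $\pi$ in both the mean and variance strongly suggests that $T(z,1)$ has a square-root branch point $\rho$ at its dominant singularity, as is typical for Catalan-type families; standard transfer theorems then give $[z^n] T(z,1) \sim c\, n^{-3/2} \rho^{-n}$. Differentiating the functional equation in $u$ and evaluating at $u=1$ yields the singular expansions of $\partial_u T$ and $\partial_u^2 T$; reading off the subdominant coefficients produces $\mu_n = (1 - 2/\pi)n + O(1)$ and $\sigma_n^2 = (4/\pi^2 + 2/\pi - 1)n + O(1)$, with the $\pi$'s entering through the ratios of singular coefficients built from $\Gamma(1/2) = \sqrt{\pi}$. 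The Gaussian limit law then follows from Hwang's quasi-power theorem once one verifies that the singularity $\rho(u)$ of $T(\cdot, u)$ extends analytically to a neighborhood of $u=1$ with $\rho(1)=\rho$, that $\log \rho(u)$ is twice continuously differentiable there, and that the variability constant is strictly positive (the last being guaranteed by the explicit value $4/\pi^2 + 2/\pi - 1 > 0$).

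The main obstacle, I expect, is the combinatorial first step: the ``central balanced'' restriction is not free, so lemniscate trees are a proper subclass of plane binary trees with $n$ leaves, and a naive decomposition will count the wrong objects and produce the wrong constants. Getting a functional equation that faithfully encodes the balance condition --- possibly by introducing an auxiliary parameter tracking a distinguished vertex or the level of the root's two sub-trees --- and verifying that the resulting algebraic system still exhibits the Catalan-type square-root singularity needed to activate the quasi-power framework, is where the real work lies. Once the functional equation is in hand, the extraction of $\mu_n$, $\sigma_n^2$, and the CLT is essentially mechanical.
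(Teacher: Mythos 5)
This statement is quoted in the survey from the cited work of Epstein, Hanin, and Lundberg; the paper itself supplies no proof, so there is no in-text argument to compare against. Measured against what the cited source actually does, your general strategy --- a bivariate generating function for the number of out-degree-two vertices, a perturbation of the dominant singularity in $u$, and Hwang's quasi-power theorem for the CLT --- is the right family of techniques. But the combinatorial model you propose would not produce these constants, and your own sanity check exposes the problem. If $T(z,u)=z+u\,\Phi(T(z,u))$ with $\Phi$ polynomial (or algebraic), the dominant singularity $\rho(u)$ is determined by an algebraic system, so the mean constant $-\rho'(1)/\rho(1)$ and the variance constant are algebraic numbers; the factors of $\Gamma(1/2)=\sqrt{\pi}$ coming from a square-root branch point cancel in the ratios $[z^n]\partial_uT(z,1)/[z^n]T(z,1)$ and cannot leave a residual $1-2/\pi$. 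A Catalan-type setup is therefore structurally incapable of yielding $\mu_n=(1-2/\pi)n+O(1)$.

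The missing idea is that a lemniscate tree is an \emph{increasing} tree: its internal vertices (the non-trivial critical points) carry a total order inherited from the moduli of the critical values, and internal vertices have out-degree $1$ or $2$, not just $2$ (if every internal vertex of a binary tree with $n$ leaves had out-degree two, $X_n$ would equal $n-1$ deterministically and the theorem would be vacuous). The correct encoding is an exponential-type generating function satisfying an ordinary differential equation of the autonomous form $\partial_z y=\phi_u(y)$, whose movable singularity sits at $\rho(u)=\int_0^\infty dw/\phi_u(w)$; the $\pi$ in the mean and variance arises from arctangent-type evaluations of that integral, not from $\Gamma(1/2)$. With that corrected model, the remainder of your plan (singularity perturbation in $u$, positivity of the variability constant, quasi-powers) goes through essentially as you describe, but without it the functional equation counts the wrong objects and the extraction step cannot recover the stated constants.
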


In 2018, A.~Frolova~et.~al.~\cite{FrolovaKhavinsonVasil'ev} explored the construction of lemniscate trees by a means they called polynomial fireworks. In this process, a single zero $z_0$ of a complex polynomial $p(z)$ is replaced by the zeros of a second polynomial $q(z)$. That is, $p(z)\mapsto (z-z_0)^{-1}p(z)q(z)$. They prove the following result regarding the effect of this process on the lemniscate tree.

\begin{theorem}
Let $p(z)$ be a lemniscate generic complex polynomial, and let $z_0\in\mathbb{C}$ be one of the zeros of $p$. If the zeros of $q$ are all sufficiently close to $z_0$, then the lemniscate tree of the polynomial $(z-z_0)^{-1}p(z)q(z)$ is obtained by appending the lemniscate tree of $q$ to the leaf of the lemniscate tree of $p$ corresponding to $z_0$, and merely extending the other leaves the appropriate length.
\end{theorem}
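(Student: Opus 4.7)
The plan is to factor $p(z) = (z-z_0)\tilde{p}(z)$, so that $P(z) \colonequals (z-z_0)^{-1}p(z)q(z) = \tilde{p}(z)q(z)$, and to set $\delta \colonequals \max_i|w_i - z_0|$, where the $w_i$ are the zeros of $q$. Fix a radius $R>0$ so small that $\overline{B(z_0,R)}$ contains no other zero or critical point of $p$ and $|\tilde{p}|$ is bounded above and below by positive constants there; work throughout under the assumption $\delta \ll R$. The first step is to localize the $n+m-2$ critical points of $P$. By Gauss--Lucas the critical points of $q$ lie in $\overline{B(z_0,\delta)}$, and under the rescaling $\zeta = (z-z_0)/\delta$ the polynomial $q$ becomes $\delta^m\hat{q}(\zeta)$, with $\hat{q}$ a polynomial independent of $\delta$ whose zeros and critical points lie in the closed unit disc. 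Since $\tilde{p}(z) = \tilde{p}(z_0) + O(\delta)$ on $B(z_0,R)$, a Rouch\'e/Hurwitz argument shows that $P'$ has exactly $m-1$ roots inside $B(z_0,C\delta)$---each a perturbation of a critical point of $\hat{q}$---and hence exactly $n-1$ roots outside $B(z_0,R)$, which converge as $\delta \to 0$ to the non-degenerate critical points of the ``limit'' polynomial $P_0(z) \colonequals (z-z_0)^m\tilde{p}(z)$.

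The second step is to separate scales of critical values. At a small critical point one has $|P(c)| \asymp |\tilde{p}(z_0)|\delta^m$, whereas at a large one $|P(c)|$ tends to a non-zero critical value of $P_0$ and is therefore bounded below by a constant $c_0>0$ independent of $\delta$. So for $\delta$ small enough every small critical value is strictly smaller in modulus than every large one, and lemniscate-genericity of $P$ reduces to distinctness of moduli within each group, which follows from $q$'s lemniscate-genericity on the small side and from a limiting argument on the large side. The rescaling $\zeta = (z-z_0)/\delta$ identifies the sub-level sets of $|P|$ at heights $\asymp \delta^m$ with those of $|\tilde{p}(z_0)\hat{q}(\zeta)|$ up to uniform error $O(\delta)$, so the subtree of $P$'s lemniscate tree carved out by its small critical lemniscates is attached at the very bottom and is isomorphic to the lemniscate tree of $q$.

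The third step, and the principal obstacle, is to match the ``upper'' portion of $P$'s tree (generated by the large critical lemniscates) with the lemniscate tree of $p$. The difficulty is that $P$ and $p$ are not globally close: they differ by the factor $q(z)/(z-z_0)$, and the large critical points of $P$ converge to critical points of $P_0$ rather than of $p$. I would handle this via the identity $|P_0(z)| = |p(z)|\cdot|z-z_0|^{m-1}$ on $\mathbb{C}\setminus\{z_0\}$, together with a secondary homotopy connecting $p = p_1$ to $P_0 = p_m$ through the pseudo-analytic family $f_\alpha(z) = |z-z_0|^\alpha|\tilde{p}(z)|$ for $\alpha \in [1,m]$, whose critical points satisfy the weighted Gauss--Lucas equation $\tilde{p}'(z)/\tilde{p}(z) = -\alpha/(z-z_0)$ and hence move continuously in $\alpha$, being pulled toward $z_0$ as $\alpha$ grows. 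A Hurwitz-type tracking of the sub-level-set components of $|f_\alpha|$, combined with a check that the moduli $|z_j(\alpha) - z_0|^\alpha|\tilde{p}(z_j(\alpha))|$ do not cross one another along this arc, would deliver the key claim that the non-degenerate critical lemniscates of $P_0$ enclose exactly the same subsets of $\{z_0\}\cup(\text{zeros of }\tilde{p})$ as do the critical lemniscates of $p$. The no-crossing verification is the technical heart of the step and is where I expect the hardest work; granting it, gluing the $q$-subtree at the $z_0$-leaf of $p$'s tree recovers precisely the tree of $P$ described in the theorem, with the phrase ``extending the other leaves the appropriate length'' reflecting merely the new overall depth of order $\delta^m$ contributed by the small $q$-lemniscates.
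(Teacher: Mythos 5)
The survey states this result with a citation to Frolova, Khavinson, and Vasil'ev and offers no proof of it, so there is nothing in the paper to compare your argument against; I can only assess the proposal on its own terms. Your first two steps are sound and are surely the right way to begin: the rescaling $\zeta=(z-z_0)/\delta$ together with Hurwitz's theorem does localize $m-1$ critical points of $P=\tilde{p}q$ at distance $O(\delta)$ from $z_0$, with critical values of size comparable to $|\tilde{p}(z_0)|\delta^m$, while the remaining $n-1$ critical points converge to the non-degenerate critical points of $P_0(z)=(z-z_0)^m\tilde{p}(z)$ with critical values bounded away from $0$; the resulting scale separation correctly identifies the bottom of the tree of $P$ with the tree of $q$.

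The gap is exactly where you place it, in the third step, and it is not merely technical: the ``no-crossing'' claim you ask the reader to grant is false in general, and with it the identification of the upper part of the tree of $P_0$ with the tree of $p$. Concretely, take $p(z)=z(z-1)(z+10)$ and $z_0=-10$. The critical points of $p$ are at approximately $0.51$ and $-6.51$, with critical value moduli approximately $2.6$ and $171$, so the inner critical lemniscate of $p$ separates $0$ from $1$ and the tree of $p$ pairs the leaves $0$ and $1$ first. But for $P_0(z)=z(z-1)(z+10)^m$ with $m$ large, the two non-degenerate critical points are approximately $-10/m$ and $1-11/m$, with critical value moduli approximately $10^{m+1}/(em)$ and $11^{m+1}/(em)$; the smaller modulus is now attained at $-10/m$, and the corresponding figure-eight has one loop around the cluster at $-10$ and one loop around $0$, with the zero at $1$ outside both. (Numerically the switch has already occurred by $m\approx 13$.) Hence for such $m$ the upper tree of $P_0$ --- and therefore of $P$ for every sufficiently small $\delta$ --- pairs $0$ with the cluster rather than with $1$, which is not the tree obtained by grafting the tree of $q$ onto the leaf $-10$ of $p$'s tree. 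Along your homotopy $f_\alpha$ the two comparable critical values must therefore cross at some $\alpha\in(1,m)$, so the no-crossing verification cannot succeed. The upshot is that the statement as transcribed here appears to require additional hypotheses (at minimum some restriction tying $\deg q$ to the geometry of $p$), and any correct proof must confront this phenomenon rather than hope it away; consult the original paper for the precise formulation before investing further effort in step three.
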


In 2015, T.~J.~Richards~\cite{Richards1} expanded the definition of the lemniscate tree by taking into account not just the inclusion relation of one critical lemniscate or zero lying in the bounded component of the complement of the other, but also i) the critical value associated with each critical point of the underlying polynomial, and ii) the rotational orientation of each interior critical lemniscate. This notion of the configuration of critical lemniscates also accommodated lemniscate non-generic polynomials. Let $U$ denote the collection of equivalence classes (modulo precomposition with an affine map) of complex polynomials with a prescribed list of critical values, and let $V$ denote the collection of critical lemniscate configurations (roughly lemniscate trees with critical value and rotation data as described above).

\begin{theorem}
The map $\Pi:U\to V$ which takes a polynomial to its critical lemniscate configuration is a bijection.
\end{theorem}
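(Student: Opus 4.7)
The plan is to recognize $\Pi$ as the statement that the critical lemniscate configuration is a complete combinatorial invariant of $p$, viewed as an $n$-fold branched cover $p:\mathbb{C}\to\mathbb{C}$ with a single preimage of $\infty$ and branching over the prescribed critical values. Since the conformal self-maps of $\mathbb{C}$ fixing $\infty$ are exactly the affine maps, proving that such a cover is determined by its configuration up to these self-maps is equivalent to proving $\Pi$ is injective, and constructing a cover from any abstract configuration $v\in V$ will establish surjectivity.

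For injectivity, given $p_1,p_2\in U$ with $\Pi(p_1)=\Pi(p_2)$, I would first read the monodromy of the branched cover off of the configuration: the rotational orientation of each critical lemniscate prescribes the cyclic action on sheets induced by a small loop around the corresponding critical value, while the tree structure records how these local loops are composed as one passes between nested regions. Since the critical values together with the monodromy determine a branched cover of $\mathbb{C}$ up to conformal equivalence of the total space, and since in this case the total space is $\mathbb{C}$, the same data forces $p_2=p_1\circ\varphi$ for an affine $\varphi$. For surjectivity, given an abstract configuration $v\in V$, I would choose a spider consisting of disjoint arcs running from $\infty$ to each prescribed critical value, cut the plane along the spider, take $n$ copies of the resulting slit plane, and glue them along the slits according to the permutations encoded by the tree and the rotation data. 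The tree forces global consistency of the gluings, the resulting surface is simply connected and planar, and uniformizing recovers a polynomial $p$ whose critical lemniscate configuration is precisely $v$.

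The main obstacle will be the non-generic case, in which several critical points may share a common critical value or several critical lemniscates may share a common modulus, producing interior nodes of higher valence in the tree. In that setting the rotational orientation must be interpreted as a full cyclic ordering rather than a binary orientation, and both the monodromy-extraction step and the gluing construction require more delicate bookkeeping to ensure consistency. My approach would be to first establish the theorem in the lemniscate-generic case (refining Catanese--Paluszny by using the additional rigidifying data of the critical values to pin down a single polynomial within each component), and then to handle non-generic configurations as limits of generic ones under small perturbations of the critical values, invoking continuous dependence of the critical lemniscate configuration on $p$ to match limiting polynomials with limiting configurations.
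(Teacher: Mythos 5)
The paper itself gives no proof of this statement (it is a survey item cited to \cite{Richards1}), so I can only judge your proposal on its own terms and against the strategy of the cited work, which proceeds by decomposing the plane along the critical level curves of $|p|$ into disks and annuli whose conformal moduli are pinned down by the prescribed critical values, and then reassembling these model pieces by conformal gluing. Your monodromy/branched-cover route is a legitimate alternative in principle, but as written it has a genuine gap at its core: the assertion that the critical lemniscate configuration determines the monodromy representation (and conversely) is precisely the substance of the theorem, and you do not supply an argument for it. The configuration records nesting of level curves of $|p|$ at the moduli $|v_i|$ together with rotation data, while the monodromy records permutations of sheets under loops around the $v_i$ in the $w$-plane; translating between the two requires choosing a base point and a generating system of loops adapted to the configuration and then checking that the tree plus rotation data reconstructs each local permutation and their composition unambiguously. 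Without that dictionary, injectivity is not established, and for surjectivity the spider construction produces a polynomial with prescribed critical values and monodromy, but you still owe a verification that its critical lemniscate configuration equals the configuration $v$ you started from --- the construction does not manufacture level curves directly, so this is a separate check, not a formality.

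The second problem is your treatment of the non-generic case by perturbation. The map $p\mapsto\Pi(p)$ is locally constant on the lemniscate-generic locus and jumps when two critical values acquire equal modulus or a critical point degenerates, so there is no continuity to invoke: a sequence of generic polynomials converging to a non-generic one does not carry a well-defined ``limiting configuration'' that you can match with $\Pi$ of the limit, and surjectivity onto non-generic configurations cannot be extracted from nearby generic ones without controlling exactly how configurations degenerate --- which is as much work as a direct argument. A sound proof must handle higher-order critical points (whose critical lemniscates are bouquets of several loops) and several critical points on one critical level set directly in the combinatorial model, as the cited construction does by building each such singular piece as an explicit branched model and gluing along the annular pieces whose moduli are forced by the critical values.
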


\section{Fingerprints of Shapes and Conformal Equivalence}
\label{sect: Fingerprints of Shapes and Conformal Equivalence.}

One of the reasons for the recent burgeoning interest in the lemniscates of complex polynomials is their potential role in the field of shape analysis. Define a shape $\Gamma$ to be a simple, smooth, closed curve in the plane, with bounded interior region $\Omega_-$ and unbounded exterior region $\Omega_+$. Let $\mathbb{D}$ denote the unit disk, and let $\mathbb{D}_+$ denote the region $\hat{\mathbb{C}}\setminus cl(\mathbb{D})$. Let $\Phi_-:\mathbb{D}\to\Omega_-$ and $\Phi_+:\mathbb{D}_+\to\Omega_+$ be analytic bijections (whose existence is quaranteed by the Riemann mapping theorem). Adopt also the normalization $\Phi_+(\infty)=\infty$ and ${\Phi_+}(\infty)>0$. Since $\Gamma$ is smooth, $\Phi_+$ and $\Phi_-$ may be extended smoothly to the boundary of their domains. The fingerprint of $\Gamma$ is defined to be the self-map of the unit circle $\tau:\mathbb{T}\to\mathbb{T}$ defined by $k={\Phi_+}^{-1}\circ\Phi_-$. The map from shapes (modulo precomposition with affine transformations) to orientation-preserving diffeomorphisms of $\mathbb{T}$ (modulo precomposition with an automorphism of the disk) is known to be a bijection. The problem of recovering a shape from its fingerprint has been explored numerically, and several algorithms have been developed (see~\cite{EbenfeltKhavinsonShapiro} and the discussion contained therein for these results). In the special case that the shape is a proper non-singular polynomial lemniscate, the corresponding fingerprint has a particularly nice form. In 2011, P.~Ebenfelt~et.~al~\cite{EbenfeltKhavinsonShapiro} showed the following.

\begin{theorem}
Let $p(z)$ be a degree $n$ complex polynomial, and suppose that the level set $\Lambda_1(p)$ has a single, non-singular component. Then the fingerprint of $\Lambda_1(p)$ is an $n^{\text{th}}$ root of a degree-$n$ finite Blaschke product. Conversely, every $n^{\text{th}}$ root of a degree-$n$ finite Blaschke product is the fingerprint for some such lemniscate.

\end{theorem}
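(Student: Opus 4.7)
The plan is to express both Riemann maps directly in terms of $p$. For the exterior, I first verify that $p$ admits a single-valued analytic $n$-th root on $\Omega_+$: since $\Gamma$ is non-singular and encloses all zeros of $p$, the only critical point of $p$ in $\overline{\Omega_+}$ is $\infty$ (of multiplicity $n-1$, as counted by Riemann--Hurwitz applied to the proper map $p:\Omega_+\to\mathbb{D}_+$), and the winding of $p$ around any loop in $\Omega_+$ separating $\Omega_-$ from $\infty$ equals $n$ (from $p(z)\sim a_nz^n$). So $g:=p^{1/n}$ is well defined on $\Omega_+\setminus\{\infty\}$ and extends meromorphically with a simple pole at $\infty$. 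Since $p:\Omega_+\to\mathbb{D}_+$ is proper of degree $n$ and factors as $w\mapsto w^n$ composed with $g$, with both maps $n$-to-$1$, the map $g:\Omega_+\to\mathbb{D}_+$ is a conformal bijection; after adjusting the leading coefficient of $p$ by a root of unity, this is $\Phi_+^{-1}$. For the interior, let $\Phi_-:\mathbb{D}\to\Omega_-$ be the normalized Riemann map. The composition $B:=p\circ\Phi_-:\mathbb{D}\to\mathbb{D}$ is a proper holomorphic self-map of the disk of degree $n$, and hence a finite Blaschke product of degree $n$. Restricting to $\mathbb{T}$ then gives
$$\tau(\zeta)=\Phi_+^{-1}(\Phi_-(\zeta))=p(\Phi_-(\zeta))^{1/n}=B(\zeta)^{1/n},$$
which is the asserted $n$-th root of a degree-$n$ Blaschke product.

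\textbf{Converse direction.} Given a degree-$n$ Blaschke product $B$, set $\tau:=B^{1/n}$ on $\mathbb{T}$ (choosing any branch); since $B$ extends analytically across $\mathbb{T}$ with no critical points there, $\tau$ is a real-analytic orientation-preserving diffeomorphism of $\mathbb{T}$. By the conformal welding theorem applied to smooth $\tau$, there exists a shape $\Gamma$ with regions $\Omega_\pm$ and normalized Riemann maps $\Phi_\pm$ whose fingerprint is $\tau$. I then define $p$ piecewise by $p(z):=B(\Phi_-^{-1}(z))$ on $\Omega_-$ and $p(z):=(\Phi_+^{-1}(z))^n$ on $\Omega_+$. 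The welding identity $\Phi_+^{-1}(\Phi_-(\zeta))=\tau(\zeta)=B(\zeta)^{1/n}$ shows the two formulas agree on $\Gamma$, so $p$ is continuous on $\mathbb{C}$ and analytic off the smooth curve $\Gamma$; by Morera's theorem applied across $\Gamma$, $p$ is entire. The expansion $\Phi_+^{-1}(z)\sim cz$ at $\infty$ (with $c>0$ by normalization) forces $p(z)\sim c^nz^n$, so $p$ is a polynomial of degree exactly $n$, and by construction $\{|p|=1\}=\Gamma$ is a single non-singular component (non-singular because $B$ and $\Phi_\pm$ are locally conformal on $\mathbb{T}$).

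\textbf{Main obstacle.} The principal non-trivial input is the conformal welding theorem for smooth $\tau$: every smooth orientation-preserving diffeomorphism of $\mathbb{T}$ is the fingerprint of some shape. This is classical in the real-analytic case, which is our setting, but it is the step whose content cannot be avoided. Everything else is routine verification: the $n$-th-root construction on $\Omega_+$ hinges on the non-singularity of $\Gamma$ (to locate all finite critical points of $p$ in $\Omega_-$ via Riemann--Hurwitz), the Blaschke characterization on $\mathbb{D}$ is the standard theorem that proper holomorphic self-maps of $\mathbb{D}$ of degree $n$ are finite Blaschke products of degree $n$, and the global analyticity of $p$ across $\Gamma$ follows from its continuity together with the smoothness of $\Gamma$.
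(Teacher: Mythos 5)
The paper itself offers no proof of this statement; it is quoted verbatim from Ebenfelt--Khavinson--Shapiro \cite{EbenfeltKhavinsonShapiro}, so there is nothing internal to compare against. Your argument is correct and is essentially the original one from that source: in the forward direction, the single-valuedness of $p^{1/n}$ on $\Omega_+$ (winding number $n$, no zeros outside $\Gamma$) together with the degree count $n = n\cdot\deg(p^{1/n})$ identifies $p^{1/n}$ with $\Phi_+^{-1}$, and the properness of $p\circ\Phi_-:\mathbb{D}\to\mathbb{D}$ gives the Blaschke product; in the converse, you correctly isolate the one genuinely non-routine input, namely conformal welding of the real-analytic circle diffeomorphism $\tau=B^{1/n}$ (which is indeed a diffeomorphism because $\frac{d}{d\theta}\arg B(e^{i\theta})>0$ for any finite Blaschke product), followed by the piecewise definition of $p$ and Morera across the smooth welding curve. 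No gaps.
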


In 2018, A.~Frolova~et.~al.~\cite{FrolovaKhavinsonVasil'ev} studied the fingerprints of smooth shapes, viewing them as smooth increasing bijections $\tau:[0,2\pi]\to[0,2\pi]$ (modulo the identification $0\sim2\pi$), rather than self-maps of the unit circle. They proved the following.

\begin{theorem}
Let $p(z)$ be a degree $n$ complex polynomial, and suppose that the lemniscate $\Lambda_1(p)$ has a single, non-singular component. Then the fingerprint of $\Lambda_1(p)$ has an even number of inflection points, at least $2$ and at most $4n-2$.
\end{theorem}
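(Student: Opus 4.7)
My plan is to translate the question about inflection points of $\tau$ into a statement about the angular derivative of the Blaschke product furnished by the preceding Ebenfelt-Khavinson-Shapiro theorem. Write $B(z)=\prod_{k=1}^n\tfrac{z-a_k}{1-\bar a_k z}$ for the associated degree-$n$ Blaschke product, and let $\beta(\theta)=\arg B(e^{i\theta})$ and $\psi(\theta)$ be continuous real-valued lifts satisfying $\tau(e^{i\theta})=e^{i\psi(\theta)}$. The relation $\tau^n=B$ gives $n\psi(\theta)=\beta(\theta)+\mathrm{const}$, so $\psi''=\beta''/n$, and the inflection points of $\psi$ correspond exactly to the strict local extrema of $\beta'$. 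The even-parity part of the conclusion is then immediate: the strict local maxima and local minima of a smooth $2\pi$-periodic function alternate around $\mathbb{T}$ and therefore occur in equal numbers.

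For the quantitative bounds I will use the Poisson-kernel identity
$$\beta'(\theta)=\sum_{k=1}^n\frac{1-|a_k|^2}{|e^{i\theta}-a_k|^2}=\sum_{k=1}^n\frac{1-r_k^2}{D_k(\theta)},$$
where $a_k=r_ke^{i\alpha_k}$ and $D_k(\theta)=1+r_k^2-2r_k\cos(\theta-\alpha_k)>0$; this drops out of the standard partial-fraction expansion of $zB'(z)/B(z)$ specialized to $|z|=1$. Differentiating once more and clearing the positive common denominator $\prod_jD_j(\theta)^2$ yields
$$\beta''(\theta)\prod_{j=1}^n D_j(\theta)^2=-\sum_{k=1}^n(1-r_k^2)D_k'(\theta)\prod_{j\neq k}D_j(\theta)^2.$$
Each $D_j$ and each $D_k'$ is a real trigonometric polynomial of degree $1$ in $\theta$, so the right-hand side is a real trigonometric polynomial of degree at most $2n-1$, which in turn has at most $2(2n-1)=4n-2$ zeros on $[0,2\pi)$; this bounds the zeros of $\beta''$ from above by $4n-2$.

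For the lower bound, I would observe that $\beta'$ is constant precisely when all $a_k=0$, i.e.\ when $B(z)=e^{i\gamma}z^n$, which corresponds to the degenerate case $p(z)=c(z-a)^n$ whose lemniscate is a round circle. Excluding this case (as the statement tacitly does), $\beta'$ is non-constant, and any non-constant smooth $2\pi$-periodic function must attain both a strict maximum and a strict minimum, at each of which $\beta''$ changes sign, producing the two required inflection points of $\psi$. The main obstacle I anticipate is the degree count itself: one must rule out hidden cancellations in $\sum_k(1-r_k^2)D_k'\prod_{j\neq k}D_j^2$ that could silently lower its trigonometric degree below $2n-1$, and for sharpness of the $4n-2$ bound one would want explicit Blaschke products whose angular derivative realizes all $4n-2$ critical points on $\mathbb{T}$.
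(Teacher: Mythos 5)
This survey states the theorem without proof (it is imported from Frolova--Khavinson--Vasil'ev), so there is no in-paper argument to compare against; on its own terms your proposal is correct and is essentially the argument of the cited source. The reduction $\tau^n=B$, hence $n\psi=\beta+\mathrm{const}$, the Poisson-kernel formula $\beta'(\theta)=\sum_k(1-|a_k|^2)/|e^{i\theta}-a_k|^2$, the parity claim (sign changes of the periodic, real-analytic function $\beta''$ come in pairs, equivalently strict maxima and minima of $\beta'$ alternate), and the exhibition of $\beta''\prod_jD_j(\theta)^2$ as a real trigonometric polynomial of degree at most $2n-1$, hence with at most $4n-2$ zeros, are all sound. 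Two remarks. The ``hidden cancellation'' you single out as the main obstacle is not an obstacle to the theorem: a drop in the trigonometric degree of the numerator only improves the upper bound, and is relevant only to sharpness of $4n-2$, which the statement does not assert. And you are right that the lower bound requires excluding $B(z)=\lambda z^n$, equivalently $p(z)=c(z-a)^n$ (circular lemniscate, affine fingerprint, no inflection points); as transcribed in the survey the hypotheses do not rule this case out, so the ``at least $2$'' clause carries that implicit nondegeneracy assumption, and your criterion for when $\beta'$ is constant (all $a_k=0$, read off from the Fourier expansion of the Poisson kernels) is the correct one.
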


Suppose again that $\Gamma=\Lambda_1(p)$ is a lemniscate of a degree $n$ complex polynomial $p$, with a single, non-critical component (that is, all of the critical values of $p$ have magnitude less than $1$, see~\cite{EbenfeltKhavinsonShapiro} or~\cite{Younsi} for details). As before, let $\Omega_+$ denote the region exterior to $\Gamma$. Then the exterior Riemann map $\Phi_+:\mathbb{D}_+\to\Omega_+$ may be taken to be $\Phi_+(z)=p(z)^{1/n}$. Let $B(z)$ be degree $n$ Blaschke product whose $n^{th}$ root is a fingerprint for $\Gamma$ (whose existence is shown in~\cite{EbenfeltKhavinsonShapiro}, as mentioned above). Then taking $n^{\text{th}}$ powers, we have the equation $B=p\circ\Phi_-$ on $\mathbb{D}$. The interesting direction is the converse (also following from~\cite{EbenfeltKhavinsonShapiro}).

\begin{theorem}\label{thm: Conformal equivalence first.}
For any finite Blaschke product $B$, there is a complex polynomial $p$ with the same degree as $B$, and an injective analytic map $\varphi:\mathbb{D}\to\mathbb{C}$ for which $B=p\circ\varphi$ on $\mathbb{D}$.
\end{theorem}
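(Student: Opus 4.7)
The strategy is to invoke the Ebenfelt--Khavinson--Shapiro theorem cited just above, which realizes $n$-th roots of degree-$n$ Blaschke products as fingerprints of proper polynomial lemniscates, and then to read the identity $B = p\circ\Phi_-$ in the paragraph preceding the theorem in reverse. The first step is to extract an $n$-th root $\tau$ of $B$ on $\mathbb{T}$: since $\theta\mapsto\arg B(e^{i\theta})$ is a smooth strictly increasing map of degree $n$ from $\mathbb{R}/2\pi\mathbb{Z}$ to itself, dividing the lifted argument by $n$ produces a smooth orientation-preserving diffeomorphism $\tau:\mathbb{T}\to\mathbb{T}$ with $\tau(z)^n=B(z)$ pointwise on $\mathbb{T}$.

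Next, the Ebenfelt--Khavinson--Shapiro theorem furnishes a degree-$n$ polynomial $p$ whose level set $\Lambda_1(p)$ is a single smooth component $\Gamma$ and whose fingerprint is this $\tau$. Let $\Omega_-$, $\Omega_+$, $\Phi_-$, $\Phi_+$ be as in the excerpt; since all critical values of $p$ have modulus less than $1$, the restriction $p:\Omega_+\to\mathbb{D}_+$ is an unramified $n$-fold cover, so $\Phi_+$ may be normalized so that $p\circ\Phi_+(w)=w^n$ on $\mathbb{D}_+$ (the behavior $p(z)\sim z^n$ at infinity is consistent with the normalization of $\Phi_+$ at $\infty$ stipulated in the excerpt). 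Set $\varphi=\Phi_-$, an injective analytic map $\mathbb{D}\to\mathbb{C}$ that extends smoothly to $\overline{\mathbb{D}}$ because $\Gamma$ is smooth. On $\mathbb{T}$, the fingerprint identity $\Phi_-=\Phi_+\circ\tau$ combined with $p\circ\Phi_+(w)=w^n$ gives
\[
p\circ\varphi(w)=p\circ\Phi_+(\tau(w))=\tau(w)^n=B(w),
\]
and since both $B$ and $p\circ\varphi$ are holomorphic on $\mathbb{D}$ and continuous on $\overline{\mathbb{D}}$, the maximum modulus principle applied to their difference forces equality throughout $\mathbb{D}$.

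The main obstacle is really the Ebenfelt--Khavinson--Shapiro theorem itself; granted that result, all that remains is normalization bookkeeping. Concretely, one must ensure that the specific $\tau$ produced in the first step is the one realized as a fingerprint (any discrepancy coming from the automorphism of $\mathbb{D}$ built into $\Phi_-$ can be absorbed into $\varphi$), and that $\Phi_+$ satisfies $p\circ\Phi_+(w)=w^n$ rather than $\zeta w^n$ for some root of unity $\zeta$ (any such factor can be absorbed by postcomposing $p$ with a rotation). Neither flexibility affects the existence of the desired factorization $B=p\circ\varphi$ with $\deg(p)=\deg(B)$.
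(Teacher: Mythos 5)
Your argument is exactly the one the paper itself sketches: the theorem is presented as an immediate consequence of the converse direction of the Ebenfelt--Khavinson--Shapiro result (every $n$-th root of a degree-$n$ Blaschke product is the fingerprint of a proper non-singular lemniscate), after which one takes $n$-th powers in the fingerprint identity $\Phi_-=\Phi_+\circ\tau$ using $p\circ\Phi_+(w)=w^n$ to obtain $B=p\circ\Phi_-$ on $\mathbb{D}$. Your treatment of the normalization ambiguities (the disk automorphism in $\Phi_-$ and the root-of-unity factor in $\Phi_+$) is correct and fills in details the paper leaves implicit.
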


In general, if $f$ is an analytic (or later, meromorphic) function on a domain $E\subset\mathbb{C}$, and there is an injective analytic map $\varphi:E\to\mathbb{C}$ and an analytic (or meromorphic) map $g$ with domain $\varphi(E)$ such that $f=g\circ\varphi$ on $E$, then $g$ is said to be a conformal model $f$ on $E$. With this notation, Theorem~\ref{thm: Conformal equivalence first.} states that a finite Blaschke product $B$ has a polynomial conformal model $p$ on $\mathbb{D}$, with $\deg(p)=\deg(B)$. Theorem~\ref{thm: Conformal equivalence first.} was also proved by different means by T.~J.~Richards~\cite{Richards1} in 2015. In 2016, Richards~\cite{Richards2} extended this result to general analytic functions which are analytic across the boundary of the unit disk, though this time with no control on the degree of the polynomial.

\begin{theorem}\label{thm: Conformal analysis for disk functions.}
Let $f$ be a function which is analytic on an open set containing the closed unit disk. Then $f$ has a polynomial conformal model on $\mathbb{D}$.
\end{theorem}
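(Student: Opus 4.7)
The strategy is to manufacture a polynomial $p$ whose branching data agrees with that of $f$ on $\overline{\mathbb{D}}$, and then to recover the univalent map $\varphi$ by lifting $f$ through $p$. Since $f$ is analytic on an open set containing $\overline{\mathbb{D}}$, it has only finitely many critical points $a_1,\dots,a_k$ in $\mathbb{D}$ (after a harmless contraction we may assume none lies on $\partial\mathbb{D}$); at each $a_i$ the function $f-f(a_i)$ vanishes to order $m_i+1$, producing the finite critical data $\{(v_i,m_i)\}_{i=1}^k$ with $v_i=f(a_i)$.

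The first main step is to produce a polynomial $p$ whose critical values in $f(\overline{\mathbb{D}})$ are exactly $v_1,\dots,v_k$ with matching local multiplicities $m_1,\dots,m_k$, and whose remaining critical values lie outside $f(\overline{\mathbb{D}})$. Existence of such a $p$ is established by a realization construction for polynomials with prescribed critical data, taking the degree sufficiently large to allow the auxiliary critical values to be placed away from $f(\overline{\mathbb{D}})$. Writing $b_1,\dots,b_k$ for the critical points of $p$ corresponding to $v_1,\dots,v_k$, one factors $p(w)-v_i=(w-b_i)^{m_i+1}u_i(w)$ and $f(z)-v_i=(z-a_i)^{m_i+1}g_i(z)$ with nonvanishing $u_i,g_i$, and extracts an $(m_i+1)$-st root via the implicit function theorem to define an analytic $\varphi$ in a neighborhood of each $a_i$ satisfying $p(\varphi(z))=f(z)$, $\varphi(a_i)=b_i$, and $\varphi'(a_i)\ne 0$.

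Next I would extend $\varphi$ to all of $\mathbb{D}$ by analytic continuation of $p^{-1}\circ f$. The only candidate branch points are the $a_i$; around each, the winding of $f$ about $v_i$ equals $m_i+1$ while the local monodromy of $p^{-1}$ at $v_i$ is a cycle of length $m_i+1$, so the composite monodromy is trivial and the continuation extends single-valuedly across each $a_i$. Global injectivity of $\varphi$ is the most delicate remaining point: one argues via an argument-principle count on $\partial\mathbb{D}$ that $\varphi$ maps $\mathbb{D}$ bijectively onto the simply connected component of $p^{-1}(f(\mathbb{D}))$ containing all the $b_i$. The chief obstacle lies in the first step—matching the local branching data while simultaneously arranging the global preimage topology of $p$ so that such a simply connected component exists; once $p$ is fixed with both features, the lifting and injectivity arguments reduce to standard applications of the monodromy principle and the argument principle.
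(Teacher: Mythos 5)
Your plan correctly isolates the hard part but does not supply it, and the step you defer to ``standard applications'' is precisely where the argument breaks. Matching the critical values $v_1,\dots,v_k$ and their local multiplicities is not enough to make $\varphi=p^{-1}\circ f$ single-valued, let alone injective. The branch points of the continuation are not only the $a_i$: every $z_0\in\mathbb{D}$ with $f(z_0)=v_i$, including the regular preimages of the critical values of $p$, is a potential branch point, and whether the monodromy there is trivial depends on \emph{which} sheet of $p^{-1}$ over $v_i$ the continuation occupies when it arrives at $z_0$ --- a global question about the path, not a local one. Likewise, injectivity requires that whenever $f(z_1)=f(z_2)$ with $z_1\neq z_2$ the two lifts land on different sheets of $p$; this forces the entire combinatorial structure of the branched cover $p$ over the relevant component of $p^{-1}(f(\mathbb{D}))$ (which critical points of $p$ lie inside which components of which level sets, and how the sheets are glued) to replicate that of $f$ over $f(\mathbb{D})$. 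For instance, if the critical level curve of $f$ through $a_2$ separates $a_1$ from $\partial\mathbb{D}$ but the corresponding critical points $b_1,b_2$ of $p$ sit in incomparable components of the level set of $p$, no single-valued injective lift exists. Constructing a polynomial realizing a prescribed global configuration of this kind is itself a nontrivial realization theorem --- essentially the bijection $\Pi:U\to V$ between polynomials and critical level curve configurations discussed at the end of Section~\ref{sect: Geometry of Lemniscates.} --- and your first step, which only prescribes critical values and multiplicities and pushes auxiliary critical values outside $f(\overline{\mathbb{D}})$, does not produce it. The closing argument-principle count also has little to grab onto, since $f(\partial\mathbb{D})$ need not be a Jordan curve and $f(\mathbb{D})$ need not be simply connected, so ``the simply connected component of $p^{-1}(f(\mathbb{D}))$ containing all the $b_i$'' need not exist.

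By contrast, the proof given in the paper (following Lowther and Speyer, via Lemmas~\ref{lem: Lemma 1.} and~\ref{lem: Lemma 2.}) avoids global topology entirely: it uses Runge's theorem together with a Lagrange-interpolation correction to produce approximants $q_n\to f$ uniformly that match the jets of $f$ at each critical point to the appropriate order, and then shows that for such an approximant the conjugating map $\varphi_n$ exists and is uniformly close to the identity, from which single-valuedness and injectivity follow from uniform convergence rather than from monodromy or degree counts. If you wish to pursue your route, you would first need the full realization theorem for critical level curve configurations as in~\cite{Richards1}, which is a substantially larger undertaking than prescribing critical values and multiplicities.
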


In 2017, T.~J.~Richards and M.~Younsi~\cite{RichardsYounsi1} gave a version of Theorem~\ref{thm: Conformal analysis for disk functions.} for meromorphic functions, in which they were also able to recover control over the degree of the polynomial $p$ (now a rational function $q$), subject to a condition on the behavior of the function $f$ on the boundary of the disk.

\begin{theorem}
Let $f$ be meromorphic function on an open set containing the closed unit disk, such that i) $f$ has no critical points on $\mathbb{T}$, and ii) $f(\mathbb{T})$ is a Jordan curve, whose bounded face contains $0$. Suppose without loss of generality that the number of zeros $m$ of $f$ lying in $\mathbb{D}$ is greater than or equal to the number of poles $n$ of $f$ lying in $\mathbb{D}$. Then there is a rational function $q$ and an injective analytic map $\varphi:\mathbb{D}\to\mathbb{C}$ such that the following hold.

\begin{itemize}
\item
$f=q\circ\varphi$ on $\mathbb{D}$.

\item
$q$ has $m$ zeros, all of which lie in $\varphi(\mathbb{D})$. $q$ has $n$ poles lying in $\varphi(\mathbb{D})$, and the only pole of $q$ not lying in $\varphi(\mathbb{D})$ is at $\infty$, with multiplicity $n-m$ (if that quantity is non-zero).
\end{itemize}
\end{theorem}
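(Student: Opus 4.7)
My plan is to produce the rational model $q$ by a conformal welding argument: treat $f$ itself as the interior data on $\mathbb{D}$, glue it along $\mathbb{T}$ to a simple exterior model on $\mathbb{D}_+$ that contributes only the desired pole at $\infty$, and then uniformize. As a first step, I observe that $f|_\mathbb{T}:\mathbb{T}\to\Gamma:=f(\mathbb{T})$ is a smooth orientation-preserving covering map. Because $\Gamma$ is a bounded Jordan curve, $f$ has no poles on $\mathbb{T}$; combined with the hypothesis that $f$ has no critical points on $\mathbb{T}$, this makes $f|_\mathbb{T}$ a smooth immersion of the compact circle onto $\Gamma$, hence a smooth covering. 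The argument principle applied to $f-w$ identifies its degree with the winding number of $f(\mathbb{T})$ around $0\in\Omega_+$, which equals $m-n$; in particular the hypotheses force $m>n$.

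Next, I construct the exterior model. Let $\psi_+:\mathbb{D}_+\to\Omega_-$ be the Riemann map normalized by $\psi_+(\infty)=\infty$ and $\psi_+'(\infty)>0$, which extends smoothly to $\mathbb{T}$ by smoothness of $\Gamma$, and put $g(z):=\psi_+(z^{m-n})$ on a neighborhood of $\overline{\mathbb{D}_+}$. Then $g$ is meromorphic there, has no zeros (as $0\notin\Omega_-$), has a single pole of order $m-n$ at $\infty$, and $g|_\mathbb{T}$ is a smooth orientation-preserving $(m-n)$-fold cover of $\Gamma$. Because any two smooth orientation-preserving degree-$d$ coverings $\mathbb{T}\to\Gamma$ differ by precomposition with a smooth self-diffeomorphism of $\mathbb{T}$ (lift both to $\mathbb{R}$ and invert one against the other), there exists a $C^\infty$ orientation-preserving diffeomorphism $h:\mathbb{T}\to\mathbb{T}$ satisfying $g\circ h=f$ on $\mathbb{T}$.

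Now I apply the conformal welding theorem to $h$: since $h$ is smooth, there exist a smooth Jordan curve $\gamma\subset\hat{\mathbb{C}}$ with complementary domains $V_-$ and $V_+$ (with $\infty\in V_+$), and Riemann maps $\Phi_-:\mathbb{D}\to V_-$, $\Phi_+:\mathbb{D}_+\to V_+$ normalized by $\Phi_+(\infty)=\infty$, both extending smoothly to the boundary, such that $\Phi_-(z)=\Phi_+(h(z))$ for every $z\in\mathbb{T}$. Define $\varphi:=\Phi_-$, so $\varphi(\mathbb{D})=V_-\subset\mathbb{C}$, and set $q:=f\circ\Phi_-^{-1}$ on $V_-$ and $q:=g\circ\Phi_+^{-1}$ on $V_+$. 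The identities $g\circ h=f$ and $\Phi_-=\Phi_+\circ h$ on $\mathbb{T}$ force the two definitions of $q$ to agree continuously along $\gamma$; since $q$ is then meromorphic on each side of the smooth curve $\gamma$ and continuous across it, a standard Morera or Schwarz-reflection argument upgrades $q$ to a meromorphic self-map of $\hat{\mathbb{C}}$, i.e.\ a rational function, and by construction $f=q\circ\varphi$ on $\mathbb{D}$.

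The zero and pole count is then immediate: $g$ has no zeros, so every zero of $q$ lies in $V_-=\varphi(\mathbb{D})$ and these zeros form the multiset $\varphi(f^{-1}(0))$ of cardinality $m$; the only pole of $q$ in $V_+$ is at $\Phi_+(\infty)=\infty$ with multiplicity $m-n$, while the poles of $q$ in $V_-$ are precisely $\varphi(f^{-1}(\infty))$, of cardinality $n$. Summing multiplicities yields $\deg q=m$, matching the claim. The main obstacle is the welding step: one must invoke that a smooth (in fact real-analytic, since $f$ and $g$ are analytic near $\mathbb{T}$) diffeomorphism $h$ of $\mathbb{T}$ admits a welding by a smooth Jordan curve with smoothly extending $\Phi_\pm$, because it is precisely this regularity that allows the piecewise-defined, continuous, sidewise-meromorphic $q$ to be promoted across $\gamma$ to a genuine rational function.
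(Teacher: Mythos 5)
The survey itself contains no proof of this theorem --- it is quoted from Richards and Younsi~\cite{RichardsYounsi1}, and the only argument actually written out in the paper is the one for Theorem~\ref{thm: To prove.} and its two lemmas --- so there is no in-paper proof to measure you against. That said, your welding argument is correct, and it is in substance the method used throughout this part of the literature (Ebenfelt--Khavinson--Shapiro, Younsi, and the cited source): take $f$ as the interior data, take $\psi_+(z^{m-n})$ as the exterior data, weld along $\mathbb{T}$, and promote the glued function to a rational map by a Morera/Painlev\'e argument across the welding curve. Your degree bookkeeping is right: $f|_{\mathbb{T}}$ is a locally injective map of the circle onto the Jordan curve $\Gamma$, hence a covering whose signed degree is the winding number of $f(\mathbb{T})$ about $0$, namely $m-n$; this forces $m>n$ and incidentally shows that the multiplicity ``$n-m$'' in the printed statement is a typo for $m-n$, which you have silently corrected. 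Two external inputs carry the weight and should be cited rather than asserted: (i) the smooth conformal welding theorem --- that a smooth orientation-preserving diffeomorphism of $\mathbb{T}$ arises from a smooth Jordan curve with Riemann maps extending smoothly to the boundary --- which is exactly the shapes-to-fingerprints bijection the survey quotes at the start of Section~\ref{sect: Fingerprints of Shapes and Conformal Equivalence.}; and (ii) boundary regularity of the Riemann map onto $\Omega_-$ (Kellogg--Warschawski, or Schwarz reflection across the analytic curve $\Gamma=f(\mathbb{T})$), which you need both for $g'\neq 0$ on $\mathbb{T}$ and for the smoothness of $h$. One small point worth making explicit is the orientation check that $g|_{\mathbb{T}}$ and $f|_{\mathbb{T}}$ traverse $\Gamma$ in the same sense (both wind $m-n>0$ times about $0$), since that is what guarantees the intertwining diffeomorphism $h$ exists at all. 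With those items supplied, the proof is complete.
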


Richards and Younsi also established a negative result regarding the degree of the polynomial conformal model for an analytic disk function $f$ to the effect that the minimal degree of a polynomial conformal model for $f$ on $\mathbb{D}$ cannot be determined by the degree of non-injectivity of $f$ on $\mathbb{D}$ (that is, how many-to-one $f$ is on $\mathbb{D}$).

\begin{theorem}
For any $n\geq2$, there is a function $f_n$ which is analytic on an open set containing the closed unit disk for which the following holds.

\begin{itemize}
\item
$f_n$ is at most $2$-to-$1$ on $\mathbb{D}$.

\item
$f_n$ has no polynomial conformal model with degree $\leq n$.

\end{itemize}

\end{theorem}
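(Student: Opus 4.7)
Proof Proposal:

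The plan is to construct $f_n$ as a composition $f_n = \tau_n \circ \pi_n$, where $\tau_n(z) = z^{n+1} - (n+1)z$ is a polynomial of degree $n+1$ and $\pi_n$ is a conformal bijection from $\mathbb{D}$ onto a thin simply connected smoothly-bounded domain $\Omega_n \subset \mathbb{C}$. Since $\tau_n'(z) = (n+1)(z^n - 1)$, the polynomial $\tau_n$ has exactly $n$ simple critical points, namely the $n$-th roots of unity $\zeta_0, \ldots, \zeta_{n-1}$ lying on $S^1 = \partial \mathbb{D}$, with distinct critical values $-n\zeta_k$. I will take $\Omega_n$ to be a thin annular neighborhood $\{1-\epsilon < |z| < 1+\epsilon\}$ of $S^1$ from which a narrow smooth radial cut has been removed at some angle distinct from every $2\pi k/n$; for small enough $\epsilon$ this $\Omega_n$ is simply connected with real-analytic boundary and contains every $\zeta_k$.

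The main technical claim is that $\tau_n$ is at most 2-to-1 on $\Omega_n$. A direct polynomial division shows that $z^{n+1} - (n+1)z + n = (z-1)^2 T(z)$ with $T(z) = z^{n-1} + 2z^{n-2} + \cdots + n$; the reciprocal polynomial of $T$ is $1 + 2z + \cdots + nz^{n-1}$, whose strictly positive, strictly increasing coefficients satisfy the hypotheses of the Enestr\"om--Kakeya theorem, so all of its roots lie in $|z| \leq (n-1)/n$, and consequently all roots of $T$ lie in $|z| \geq n/(n-1)$. By rotational symmetry, the $n-1$ non-critical preimages of each critical value $-n\zeta_k$ all lie strictly outside $\overline{\mathbb{D}}$. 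Separately, reducing the equation $\tau_n(e^{i\theta_1}) = \tau_n(e^{i\theta_2})$ (with $\phi = (\theta_1 - \theta_2)/2$) to $\sin((n+1)\phi) = \pm(n+1)\sin\phi$ and applying the bound $|\sin((n+1)\phi)| \leq (n+1)|\sin\phi|$, with equality only when $\sin\phi = 0$, shows $\tau_n|_{S^1}$ is injective. A local Taylor expansion of $\tau_n$ near each $\zeta_k$ together with a continuity argument (using that injectivity of $\tau_n|_{S^1}$ prevents any of the $n+1$ preimage sheets of $\tau_n^{-1}$ from crossing $S^1$) then separates the sheets into: one ``$S^1$-sheet'', one ``local partner'' sheet which stays on the outer side of $S^1$ and approaches $S^1$ only near the critical values, and $n-1$ ``distant'' sheets whose modulus is uniformly bounded below by $1 + \delta_n$ for some $\delta_n > 0$. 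Choosing $\epsilon < \delta_n$, only the $S^1$-sheet and possibly the local partner sheet can lie in $\Omega_n$, giving the at-most-2-to-1 property.

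Since $\partial\Omega_n$ is real-analytic, the Riemann map $\pi_n$ extends analytically across $\partial\mathbb{D}$ by Schwarz reflection, and thus $f_n = \tau_n \circ \pi_n$ is analytic on a neighborhood of $\overline{\mathbb{D}}$. The bijection $\pi_n$ transports the at-most-2-to-1 property to $f_n$ on $\mathbb{D}$, and the chain rule $f_n' = (\tau_n' \circ \pi_n) \pi_n'$ with $\pi_n' \neq 0$ shows that $f_n$ has exactly $n$ simple critical points in $\mathbb{D}$, namely the $\pi_n$-preimages of $\zeta_0, \ldots, \zeta_{n-1}$. If $f_n = p \circ \varphi$ were any polynomial conformal model of $f_n$ on $\mathbb{D}$, then since $\varphi$ is injective analytic and hence $\varphi'$ is nonvanishing, the chain rule gives a multiplicity-preserving bijection between the critical points of $f_n$ in $\mathbb{D}$ and those of $p$ in $\varphi(\mathbb{D})$, forcing $\deg(p) - 1 \geq n$ and hence $\deg(p) > n$. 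The main obstacle is the global verification of the 2-to-1 property on $\Omega_n$, for which the Enestr\"om--Kakeya bound together with the injectivity of $\tau_n|_{S^1}$ provides the essential uniform control over all $n+1$ preimage sheets.
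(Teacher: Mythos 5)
This survey states the theorem without proof, citing Richards and Younsi~\cite{RichardsYounsi1}, so there is no in-paper argument to compare yours against; I can only assess your construction on its own terms, and it is essentially sound. Your central mechanism --- that for any model $f_n=p\circ\varphi$ with $\varphi$ injective analytic, the identity $f_n'=(p'\circ\varphi)\varphi'$ and the nonvanishing of $\varphi'$ force $\deg(p)-1$ to be at least the number of critical points of $f_n$ in $\mathbb{D}$ counted with multiplicity --- is exactly the principle underlying the cited result, so the problem reduces, as you say, to exhibiting a function of valence at most $2$ with $n$ critical points in the disk. Your choice $\tau_n(z)=z^{n+1}-(n+1)z$ does the job: its critical points are the $n$-th roots of unity, all simple, with distinct critical values $-n\zeta_k$; the factorization $z^{n+1}-(n+1)z+n=(z-1)^2(z^{n-1}+2z^{n-2}+\cdots+n)$ checks out, and the Enestr\"{o}m--Kakeya bound correctly places the non-critical preimages of each critical value in $|z|\geq n/(n-1)$; and the reduction of $\tau_n(e^{i\theta_1})=\tau_n(e^{i\theta_2})$ to $|\sin((n+1)\phi)|=(n+1)|\sin\phi|$ correctly yields injectivity of $\tau_n$ on the unit circle.

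The one step you should tighten is the ``sheet separation'' paragraph, which as written is a picture rather than a proof. You do not actually need to track sheets (nor, strictly speaking, the Enestr\"{o}m--Kakeya computation): the two facts you have already established --- injectivity of $\tau_n$ on the circle and simplicity of its critical points --- give the at-most-$2$-to-$1$ property on a thin closed annulus $A_\epsilon$ by a compactness argument. If no $\epsilon$ worked, choose $w_j$ with three distinct $\tau_n$-preimages in $A_{1/j}$; after passing to subsequences the three preimages converge to boundary-circle points with equal images, hence by injectivity to a single point $z^*$, and eventually all three lie in a neighborhood of $z^*$ on which $\tau_n$ is at most $2$-to-$1$ (since $z^*$ is either a regular point or a simple critical point), a contradiction. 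With that repair, and with the routine care needed to make the slit annular domain a real-analytic Jordan domain so that the Riemann map $\pi_n$ extends conformally across $\partial\mathbb{D}$, your proof is complete.
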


In 2016, M.~Younsi~\cite{Younsi} showed that a rational function may be found which is simultaneously conformally equivalent to any two prescribed finite Blaschke products $A$ and $B$, on $\mathbb{D}$ and $\mathbb{D}_+$ respectively.

\begin{theorem}
Let $A$ and $B$ be finite Blaschke products. There is a rational function $q(z)$ for which the lemniscate $\Gamma=\Lambda_1(q)$ is a single, non-critical component, and for which $A=q\circ\Phi_-$ on $\mathbb{D}$ and $B=q\circ\Phi_+$ on $\mathbb{D}_+$.
\end{theorem}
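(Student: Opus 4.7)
The plan is to apply conformal welding: construct a candidate fingerprint $\tau$ from the compatibility relation forced by $A$ and $B$, obtain a shape $\Gamma$ with fingerprint $\tau$ via the shape-fingerprint bijection recalled earlier in the paper, and then glue $A$ and $B$ across $\Gamma$ to produce $q$. Note that for the conclusion even to make sense one must have $\deg A = \deg B$: the map $q|_{\Omega_-}$ is a proper map of degree $\deg A$ onto $\mathbb{D}$ and $q|_{\Omega_+}$ is a proper map of degree $\deg B$ onto $\mathbb{D}_+$, both of which must agree with $\deg q$ as a rational function. I will assume this common degree throughout.

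First, I would construct $\tau: \mathbb{T} \to \mathbb{T}$ as a smooth orientation-preserving diffeomorphism satisfying $A = B \circ \tau$ on $\mathbb{T}$. Since $A|_\mathbb{T}$ and $B|_\mathbb{T}$ are real-analytic orientation-preserving covering maps of $\mathbb{T}$ of the same degree with no critical points there, $\tau$ is obtained by fixing any basepoint correspondence and extending via the inverse function theorem applied locally to $B|_\mathbb{T}$. Second, by the shape-fingerprint bijection recalled in the introduction, there exists a shape $\Gamma$ (unique up to affine transformation) with fingerprint $\tau$; let $\Phi_-: \mathbb{D} \to \Omega_-$ and $\Phi_+: \mathbb{D}_+ \to \Omega_+$ be the normalized Riemann maps, so that $\Phi_+^{-1} \circ \Phi_- = \tau$ on $\mathbb{T}$.

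Third, I would define $q$ on $\hat{\mathbb{C}}$ by $q := A \circ \Phi_-^{-1}$ on $\Omega_-$ and $q := B \circ \Phi_+^{-1}$ on $\Omega_+$. For any $w \in \Gamma$, writing $w = \Phi_-(\zeta) = \Phi_+(\tau(\zeta))$, the two definitions agree since $A(\zeta) = B(\tau(\zeta))$, so $q$ extends continuously to all of $\hat{\mathbb{C}}$. A standard removable singularity argument (Morera's theorem, using the smoothness of $\Gamma$ and the fact that $q$ is already meromorphic on $\hat{\mathbb{C}} \setminus \Gamma$) then promotes $q$ to a meromorphic function on the Riemann sphere, hence a rational function. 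The desired properties follow immediately: $|q| < 1$ on $\Omega_-$, $|q| > 1$ on $\Omega_+$, and $|q| = 1$ on $\Gamma$ give $\Lambda_1(q) = \Gamma$ as a single component; the absence of critical points of $A$ and $B$ on $\mathbb{T}$ translates (via the chain rule through the Riemann maps) to $q$ having no critical point on $\Gamma$; and $q \circ \Phi_- = A$, $q \circ \Phi_+ = B$ hold by construction.

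The chief obstacle in this program is the existence of a shape $\Gamma$ with prescribed fingerprint $\tau$, which is precisely the conformal welding problem; fortunately, this is handed to us by the shape-fingerprint bijection already cited. A subsidiary but delicate technical point, needed to conclude that $q$ is rational rather than merely continuous across $\Gamma$, is the smoothness of $\Gamma$: because $A$ and $B$ are analytic across $\mathbb{T}$, the diffeomorphism $\tau$ is real-analytic, and the standard regularity theory for welding ensures that the resulting shape $\Gamma$ is sufficiently smooth (in fact real-analytic) for the removable singularities argument to apply along $\Gamma$.
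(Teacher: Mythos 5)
This is a survey item: the paper states the theorem as a result of Younsi~\cite{Younsi} and gives no proof of its own, so there is nothing internal to compare against. Your welding argument is correct and is essentially the proof in the cited source: solve $A=B\circ\tau$ on $\mathbb{T}$ by lifting the two boundary covering maps, weld $\tau$ to obtain an analytic Jordan curve $\Gamma$, define $q$ piecewise via $A\circ\Phi_-^{-1}$ and $B\circ\Phi_+^{-1}$, and remove $\Gamma$ by Morera. You are also right that $\deg A=\deg B$ is forced (both equal the number of zeros, respectively poles, of $q$, hence $\deg q$); that hypothesis is present in Younsi's original statement and is implicitly omitted in the survey's version, so flagging it is a genuine correction rather than a gap in your argument.
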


In 2019, T.~J.~Richards and M.~Younsi~\cite{RichardsYounsi2} gave a first constructive result, describing an explicit construction for the polynomial conformal model for finite Blaschke products of degree at most $3$. They also gave the following formula for the polynomial conformal model $p$ and associated injective analytic map $\varphi$ for a finite Blaschke product of arbitrarily high degree, whose zeros are evenly distributed on a circle centered at the origin.

\begin{theorem}
Let $c\in\mathbb{D}$, $\lambda\in\mathbb{T}$, and $n\geq1$ be chosen. Define $B(z)=\lambda\dfrac{z^n-c^n}{1-\bar{c}^nz^n}$. Then $B$ has polynomial conformal model $p(z)=\lambda\left(|c|^{2n}-1\right)z^n-\lambda c^n$. Setting $\psi(z)=\dfrac{e^{i\pi/n}z}{\sqrt[n]{1-\bar{c}^nz^n}}$, $\psi^{-1}$ is an injective analytic map on $\mathbb{D}$, and $B=p\circ\psi^{-1}$ on $\mathbb{D}$.

\end{theorem}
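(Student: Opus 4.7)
The plan is to verify the two nontrivial claims: (i) $\psi^{-1}$ exists as an injective analytic map on $\mathbb{D}$, and (ii) $p\circ\psi^{-1}=B$ on $\mathbb{D}$. That $\deg p=n$ is immediate, since $|c|<1$ forces the leading coefficient $\lambda(|c|^{2n}-1)$ to be nonzero.

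For (i), rather than try to specify the domain of $\psi$ directly, I would construct the inverse explicitly. Since the zeros of $1-\bar{c}^n z^n$ lie on $|z|=1/|c|>1$, this function is nonvanishing on the simply connected disk $\mathbb{D}$, so there is a unique analytic branch $g(z)=\sqrt[n]{1-\bar{c}^n z^n}$ on $\mathbb{D}$ with $g(0)=1$; in fact $g(z)=h(z^n)$, where $h(u)=(1-\bar{c}^n u)^{1/n}$ is the natural branch on $|u|<1/|c|^n$. Define
\[
\varphi(z)=\frac{e^{-i\pi/n}\,z}{g(z)}, \qquad z\in\mathbb{D},
\]
so that $\varphi$ is analytic on $\mathbb{D}$ and $\varphi(z)^n=-z^n/(1-\bar{c}^n z^n)$. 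Injectivity: if $\varphi(z_1)=\varphi(z_2)$, then equating $n$-th powers and invoking that $u\mapsto -u/(1-\bar{c}^n u)$ is a M\"obius transformation forces $z_1^n=z_2^n$; hence $g(z_1)=g(z_2)$, and the defining formula then gives $z_1=z_2$. To identify $\varphi$ as $\psi^{-1}$, compute
\[
1-\bar{c}^n\varphi(z)^n=\frac{1}{1-\bar{c}^n z^n},
\]
so the natural branch of $(1-\bar{c}^n\varphi(z)^n)^{1/n}$ equals $1/g(z)$; therefore $\psi(\varphi(z))=e^{i\pi/n}\varphi(z)\cdot g(z)=z$ on $\mathbb{D}$.

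For (ii), set $w=\varphi(z)$, so $w^n=-z^n/(1-\bar{c}^n z^n)$, and compute
\[
p(w)=\lambda(|c|^{2n}-1)\cdot\frac{-z^n}{1-\bar{c}^n z^n}-\lambda c^n=\lambda\cdot\frac{(1-|c|^{2n})z^n-c^n(1-\bar{c}^n z^n)}{1-\bar{c}^n z^n}.
\]
Expanding the numerator and using $c^n\bar{c}^n=|c|^{2n}$ collapses it to $z^n-c^n$, giving $p(w)=B(z)$, as required.

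The main obstacle is branch bookkeeping. The identity $\varphi(z)^n=-z^n/(1-\bar{c}^n z^n)$ does not by itself determine $\varphi$; one must verify that the prefactor $e^{-i\pi/n}$ is precisely what makes the two natural $n$-th-root branches compose to the identity. Once the key identity $1-\bar{c}^n\varphi(z)^n=(1-\bar{c}^n z^n)^{-1}$ is established, the branches of the $n$-th roots used in $\psi$ and $\varphi$ mesh automatically, and the rest of the argument is a short symbolic manipulation.
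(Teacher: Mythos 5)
The paper itself states this theorem without proof (it is a survey item attributed to Richards and Younsi), so there is no in-text argument to measure yours against; on its own merits, your verification is correct and complete. The two identities that carry the whole argument, namely $\varphi(z)^n=-z^n/(1-\bar{c}^nz^n)$ and consequently $1-\bar{c}^n\varphi(z)^n=(1-\bar{c}^nz^n)^{-1}$, are exactly right, and they are what make the branch of the $n$-th root appearing in $\psi$ mesh with $g$ so that $\psi\circ\varphi=\mathrm{id}$ on $\mathbb{D}$; the algebraic collapse of $p(\varphi(z))$ to $B(z)$ via $c^n\bar{c}^n=|c|^{2n}$ also checks out. Two minor remarks. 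First, once $\psi\circ\varphi=\mathrm{id}$ is established, $\varphi$ has a left inverse and is therefore automatically injective, so your separate M\"obius argument, while correct, is redundant. Second, the sentence ``the zeros of $1-\bar{c}^nz^n$ lie on $|z|=1/|c|>1$'' tacitly assumes $c\neq0$; the case $c=0$ is trivial (the denominator is identically $1$ and $\varphi(z)=e^{-i\pi/n}z$), but since the theorem permits it, a one-line acknowledgement would make the proof airtight.
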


In 2013, T.~J.~Richards~\cite{RichardsLowtherSpeyer} posted Theorem~\ref{thm: Conformal analysis for disk functions.} as a conjecture on the website \textit{math.stackexchange.com}. As noted above, Richards published a proof for this result in 2016. Before that, also in 2013, users G.~Lowther and D.~Speyer provided a proof for a more general result (also on~\cite{RichardsLowtherSpeyer}), where the disk $\mathbb{D}$ is replaced with an arbitrary compact set. As we wish to include this more general result, and the proof has not appeared in a peer-reviewed source in the intervening years, we will present the proof here, with an extension also to meromorphic functions. It should be emphasized that the proof presented here is essentially that of Lowther and Speyer, the only non-trivial changes being those necessary to accommodate meromorphic rather than analytic functions.

\begin{theorem}\label{thm: To prove.}
Let $K\subset\mathbb{C}$ be compact, and let $f$ be a function which is meromorphic on $K$. Then there is an injective analytic function $\varphi:K\to\mathbb{C}$, and a rational function $q$ such that $f=q\circ\varphi$ on $K$.
\end{theorem}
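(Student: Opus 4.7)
The plan is to build a rational function $q$ whose branching structure is tailored to the critical-value data of $f$ on $K$, and to take $\varphi$ as a carefully chosen analytic branch of $q^{-1}\circ f$. With this ansatz the identity $q\circ\varphi = f$ is tautological, and the real work lies in arranging $\varphi$ to be simultaneously single-valued analytic on a neighborhood of $K$ and injective on $K$.

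First, I would pass to a connected open neighborhood $U\supset K$ on which $f$ is meromorphic, and identify the finite set $C\subset K$ of critical points of $f$ (zeros of $f'$, together with poles of the appropriate orders), each equipped with a local ramification index $m(z_0)$ defined by $f(z)-f(z_0) = c(z-z_0)^{m(z_0)} + O((z-z_0)^{m(z_0)+1})$. The critical-value set $V=f(C)\subset\widehat{\mathbb{C}}$ is then finite.

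Second, I would design the rational function $q$ so that two things hold. (a) Above each $v\in V$ the ramification data of $q$ matches the collection $\{m(z_0):z_0\in f^{-1}(v)\cap C\}$, so that a lift $\varphi$ of $f$ through $q^{-1}$ has no branch-point obstruction at the critical points of $f$. (b) The monodromy of $q^{-1}$, pulled back by $f$, is trivial along every closed loop in $U$ (so that $\varphi$ is single-valued) yet non-trivial along every path in $U$ connecting a non-injective pair $z_1\neq z_2\in K$ of $f$ (so that $\varphi(z_1)\neq\varphi(z_2)$). Property (b) will typically force $q$ to be of high degree with auxiliary branch points placed according to the topology of the non-injective locus of $f|_K$; such a $q$ can be produced, in principle, by a Riemann-existence-style argument from prescribed branching and monodromy data. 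Fixing a branch of $q^{-1}$ at a base point of each component of $U$ and continuing analytically, define $\varphi$ on all of $U$: by (a) it is single-valued analytic, by (b) it is injective on $K$, and $q\circ\varphi = f$ on $U$ by construction. Poles of $f$ in $K$ need no separate treatment, because they correspond to $\varphi(z)$ landing on a pole of the rational function $q$.

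The hard step, and the main obstacle, is achieving property (b): balancing the requirement that the monodromy be trivial along every closed loop in $U$ (for single-valuedness of $\varphi$) against the requirement that it be non-trivial along every path joining a non-injective pair of $f$ (for injectivity of $\varphi$). These constraints pull in opposite directions, and the Lowther--Speyer argument reconciles them through a concrete, often high-degree, choice of $q$ whose auxiliary branch points are arranged via a combinatorial/topological analysis of the coincidence structure of $f$ on $K$; the original \textit{math.stackexchange} proof presumably short-circuits this analysis by an explicit parametric family. Verifying that the resulting $\varphi$ really is injective on all of $K$ (and not merely generically) is the delicate final step.
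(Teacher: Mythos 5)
Your proposal correctly isolates the crux of the theorem---arranging for the branch $\varphi$ of $q^{-1}\circ f$ to be simultaneously single-valued and injective on $K$---but it does not actually resolve it. Property (b) is precisely the theorem's difficulty restated in the language of monodromy, and you defer its verification to ``a Riemann-existence-style argument'' together with ``a combinatorial/topological analysis of the coincidence structure of $f$ on $K$'' that is never carried out. This deferral hides a real obstruction: the Riemann existence theorem lets you prescribe the monodromy of $q$ over loops in the \emph{target} sphere, but what you need is control over the analytic continuation of $q^{-1}$ along the $f$-images of (non-closed) paths joining each non-injective pair $z_1\neq z_2$ of $f$ in $K$, for \emph{all} such pairs simultaneously, while keeping the continuation trivial around every closed loop in $U$. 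There is no mechanism offered for producing such a $q$, and it is far from clear that branching data alone can encode the coincidence structure of an arbitrary meromorphic $f$ on an arbitrary compact $K$. As written, the proposal is a reduction of the theorem to an unproved (and harder-looking) existence claim, so it has a genuine gap.

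The paper's proof (following Lowther and Speyer) sidesteps the monodromy question entirely, and it is worth seeing why, because the idea is both simpler and more robust. One first excises small neighborhoods of the poles (where $f=\psi_k^{-m_k}$ for conformal $\psi_k$), then uses Runge's theorem to produce rational functions $\widehat{q_n}\to f$ uniformly on a neighborhood of the remaining compact set, corrected by Lagrange interpolation so that $q_n$ matches the full jet of $f$ at each critical point (Lemma~\ref{lem: Lemma 2.}). Because $q_n$ is uniformly close to $f$ and has the same local ramification at the critical points, the locally defined maps $\psi_n=q_n^{-1}\circ f$ (taking $m$-th roots near critical points) glue into a single analytic map with $\psi_n(z)\to z$ \emph{uniformly} (Lemma~\ref{lem: Lemma 1.}). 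Global injectivity then falls out of a compactness argument: points closer than some $\epsilon$ lie in a common ball where $\psi_n$ is injective, and points farther apart than $\epsilon$ cannot collide once $|\psi_n(z)-z|<\epsilon/2$. In other words, injectivity is obtained not by engineering the branching of $q$ to separate non-injective pairs of $f$, but by making $\varphi$ a small perturbation of the identity; this is the idea your proposal is missing.
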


Mirroring the work of Lowther and Speyer, we will make use of the following lemmas.

\begin{lemma}\label{lem: Lemma 1.}
Let $f$, $\{g_n\}_{n=1}^\infty$ be non-constant analytic functions on an open set $U\subset\mathbb{C}$, and assume that $g_n\to f$ uniformly on $U$. Let $K\subset U$ be compact, and suppose the following holds.
\begin{itemize}
\item
If $z\in K$ is a critical point of $f$ with multiplicity $m\geq 1$, then for each $n\geq 1$, and each $j\in\{0,1,\ldots,m\}$, ${g_n}^{(j)}(z)=f^{(j)}(z)$.

\end{itemize}

Then for all sufficiently large $n$, there is an injective analytic map $\varphi_n:K\to U$ such that $f=g_n\circ\varphi_n$ on $K$.
\end{lemma}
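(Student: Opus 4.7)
I would construct $\varphi_n$ as a uniformly small perturbation of the identity, first defining it locally and then gluing the pieces. Since $f$ is non-constant and $K$ is compact, $f$ has only finitely many critical points $z_1,\ldots,z_k$ in $K$, of multiplicities $m_1,\ldots,m_k$. Near each $z_i$, the matching-derivatives hypothesis yields $f(z)-f(z_i)=(z-z_i)^{m_i+1}H_i(z)$ and $g_n(z)-f(z_i)=(z-z_i)^{m_i+1}H_{n,i}(z)$ with $H_i(z_i)\neq 0$ and, by uniform convergence of $g_n$ together with its derivatives, $H_{n,i}\to H_i$ uniformly on a disk $V_i$ about $z_i$. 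Choosing holomorphic $(m_i+1)$-st roots consistently on $V_i$ (possible for large $n$ since $H_{n,i}(z_i)\to H_i(z_i)\neq 0$) produces local biholomorphisms $F_i$ and $G_{n,i}$ fixing $z_i$ with $f-f(z_i)=F_i^{m_i+1}$, $g_n-f(z_i)=G_{n,i}^{m_i+1}$, and $G_{n,i}\to F_i$ uniformly. I would then define $\varphi_n:=G_{n,i}^{-1}\circ F_i$ on a slight shrinking of $V_i$; direct verification gives $g_n\circ\varphi_n=f$, and $\varphi_n\to\mathrm{id}$ uniformly.

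\textbf{Gluing.} Cover the compact set $K\setminus\bigcup_iV_i$ by finitely many open disks $W_\alpha\subset U$ on which $f$ is a conformal bijection onto its image. For large $n$, $C^1$-convergence of $g_n$ to $f$ combined with Hurwitz's theorem makes $g_n$ a conformal bijection on a slight enlargement of each $W_\alpha$, so I would set $\varphi_n:=g_n^{-1}\circ f$ on $W_\alpha$, again near the identity. On overlaps the two local constructions must agree: in a non-critical patch, uniqueness of local holomorphic inverses close to the identity is immediate from the inverse function theorem; in a patch containing $z_i$, the local analytic set $\{g_n(w)=f(z)\}$ has $m_i+1$ branches through $(z_i,z_i)$ indexed by $(m_i+1)$-st roots of unity, and only the principal branch is close to $\mathrm{id}$. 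Hence the local maps glue into a holomorphic $\varphi_n$ defined on a neighborhood of $K$, satisfying $g_n\circ\varphi_n=f$ and $\varphi_n\to\mathrm{id}$ uniformly; in particular $\varphi_n(K)\subset U$ for large $n$.

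\textbf{Injectivity and main obstacle.} For large $n$, injectivity of $\varphi_n$ on $K$ follows from a standard argument-principle comparison of $\varphi_n(z)-\varphi_n(w)$ with $z-w$ on small bidisks covering $K\times K$, exploiting uniform closeness to the identity. The step I expect to be the principal technical obstacle is the gluing near the critical points: one must verify that the canonical-form branch selected inside $V_i$ matches the branch that the inverse function construction selects just outside $V_i$. This compatibility rests on the matching-derivatives hypothesis, which is precisely what promotes the sheet through $(z_i,z_i)$ in $\{g_n(w)=f(z)\}$ to a genuine holomorphic deformation of the diagonal (of the correct order $m_i+1$) rather than a nearby cluster of lower-order sheets that would prevent one from selecting a single-valued near-identity inverse.
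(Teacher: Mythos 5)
Your proposal is correct and follows essentially the same route as the paper's proof: a local near-identity solution of $g_n\circ\varphi_n=f$ built by the inverse function theorem at non-critical points and by extracting roots of the canonical form $(z-z_i)^{m_i+1}H(z)$ at critical points (where the matching-derivatives hypothesis is used), glued via uniqueness of the near-identity branch on connected overlaps, with global injectivity from uniform closeness to the identity plus local injectivity on the patches. The only cosmetic differences are that the paper forces local injectivity of $g_n$ via the estimate $\Re(g_n')>1/2$ rather than Hurwitz, and writes the order of vanishing as $m$ rather than $m_i+1$.
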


\begin{lemma}\label{lem: Lemma 2.}
Let $f$ be analytic and non-constant on a compact set $K\subset\mathbb{C}$. There is an open neighborhood $U$ of $K$ and a sequence of rational functions $q_n$ having no poles in $U$ and having only a single pole in each component of $U^c$ for which $q_n\to f$ uniformly on $U$, and such that if $z\in K$ is a critical point of $f$ with multiplicity $m\geq 1$, then for each $n\geq 1$, and each $j\in\{0,1,\ldots,m\}$, ${q_n}^{(j)}(z)=f^{(j)}(z)$.
\end{lemma}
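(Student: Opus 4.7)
The plan is to combine Runge's theorem with Hermite interpolation. Since $f$ is analytic on the compact set $K$, it extends to an analytic function on some open neighborhood $V$ of $K$, and because $f$ is non-constant, $f'$ has only finitely many zeros in $K$, which I denote $z_1,\ldots,z_k$ with multiplicities $m_1,\ldots,m_k$. First I would choose a precompact open neighborhood $U$ of $K$ (for instance, the interior of the union of finitely many closed squares from a sufficiently fine grid covering $K$) with $\overline{U}\subset V$ and with $\hat{\mathbb{C}}\setminus\overline{U}$ having only finitely many components.

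Next I would apply Runge's theorem to obtain rational functions $p_n$ converging uniformly to $f$ on $\overline{U}$, whose poles are confined to a prescribed finite set $S$ containing exactly one point from each component of $\hat{\mathbb{C}}\setminus\overline{U}$; we take $\infty$ as the distinguished point of the unbounded component, so that a polynomial part is permitted. The $p_n$ will generally not satisfy the derivative-matching conditions at the critical points, so I would correct them. Let $H_n$ be the Hermite interpolation polynomial of degree less than $\sum_{i=1}^k (m_i+1)$ satisfying
\[
H_n^{(j)}(z_i) = (p_n-f)^{(j)}(z_i)\qquad\text{for every } i \text{ and every } j\in\{0,1,\ldots,m_i\},
\]
and set $q_n := p_n - H_n$. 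Then by construction $q_n^{(j)}(z_i) = f^{(j)}(z_i)$ for all required $j$.

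Two items remain to verify. For the pole structure: $H_n$ is a polynomial, so subtracting it only modifies the polynomial part of $p_n$, which is absorbed into the prescribed pole at $\infty$ in the unbounded component; thus $q_n$ still has at most a single pole in each component of $U^c$. For uniform convergence on $U$: since $p_n\to f$ uniformly on $\overline{U}$, Cauchy's integral formula applied to small circles in $U$ around each $z_i$ yields $(p_n-f)^{(j)}(z_i)\to 0$ for every $j$; the Hermite interpolant depends linearly and continuously on these target values, so $H_n\to 0$ uniformly on $\overline{U}$, and therefore $q_n\to f$ uniformly on $U$. The one genuinely delicate choice is arranging at the outset that $\hat{\mathbb{C}}\setminus\overline{U}$ has only finitely many components (necessary because any rational function has only finitely many poles, and Runge's theorem needs finitely many prescribed pole sites to deliver rational rather than merely meromorphic approximants); the grid-based neighborhood handles this uniformly for any compact $K\subset\mathbb{C}$, and everything else is routine interpolation and bookkeeping.
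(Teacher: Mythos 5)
Your proposal is correct and follows essentially the same route as the paper's proof: Runge approximation followed by a Hermite/Lagrange interpolation correction, with the Cauchy integral formula showing the correction tends to zero uniformly. The one place you are more careful than the paper is in explicitly arranging that $\hat{\mathbb{C}}\setminus\overline{U}$ has finitely many components (via the grid of squares) so that the approximants are genuinely rational; this is a worthwhile detail the paper leaves implicit.
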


\begin{proof}[Proof of Theorem~\ref{thm: To prove.}]
Let $K\subset\mathbb{C}$ be compact, and let $\mathcal{O}$ be an open set containing $K$. Let $f:\mathcal{O}\to\hat{\mathbb{C}}$ be meromorphic. By replacing $\mathcal{O}$ with a slightly smaller open set, still containing $K$, we may assume that $f$ is meromorphic on the closure of $\mathcal{O}$ (and thus has only finitely many poles on $\mathcal{O}$), with no critical points or poles on $\partial\mathcal{O}$. Let $w_1,w_2,\ldots,w_M\in\mathcal{O}$ be the poles of $f$ in $\mathcal{O}$, with multiplicities $m_1,m_2,\ldots,m_M\in\mathbb{N}$.

Around each pole $w_k$, there is an open neighborhood $E_k$ such that for some analytic bijection $\psi_k:E_k\to\mathbb{D}$, with $\psi_k(w_k)=0$, $f(z)=\dfrac{c_k}{\psi_k(z)^{m_k}}$ on $E_k$. By i) dividing $f$ by a large enough constant, ii) reducing the neighborhoods $E_k$ as necessary, and iii) making the appropriate choice of the maps $\psi_k$, we may assume without loss of generality that each $c_k=1$. Define $\mathcal{O}_2=\mathcal{O}\setminus\ds\bigcup E_k$.

By Lemma~\ref{lem: Lemma 2.}, we may choose a neighborhood $U$ of $cl(\mathcal{O}_2)$, and a sequence of rational functions $\{q_n\}$ which interpolates the values and derivative data at each critical point of $f$ in $\mathcal{O}_2$. By Lemma~\ref{lem: Lemma 1.}, for sufficiently large $n$, there is an injective analytic map $\varphi_n:\mathcal{O}_2\to U$ such that $f=q_n\circ\varphi_n$ on $\mathcal{O}_2$. Let $n_0$ denote the smallest such (or any such) value of $n$. Set $q=q_{n_0}$ and $\varphi=\varphi_{n_0}$.

For each $1\leq k\leq M$, let $\widetilde{E_k}$ denote the bounded region bounded by $\varphi\left(\partial E_k\right)$. Since each $E_k$ contained a single distinct pole of $f$ of multiplicity $m_k$, each $\widetilde{E_k}$ contains a single distinct pole of $q$ of multiplicity $m_k$. Thus since $|q|=1$ on $\partial\widetilde{E_k}$ (since $|f|=1$ on $\partial E_k$), we may choose $\widetilde{\psi_k}:\widetilde{E_k}\to\mathbb{D}$ to be a Riemann map for $\widetilde{E_k}$ for which $$q(z)=\dfrac{1}{\widetilde{\psi_k}(z)^{m_k}}\text{ on }\widetilde{E_k}.$$ Thus if we extend $\varphi$ from $\mathcal{O}_2$ to $\mathcal{O}$ by $\varphi=\widetilde{\psi_k}^{-1}\circ\psi_k$ on $E_k$, then $\varphi$ is continuous, thus analytic across the boundary of $E_k$, and $f=q\circ\varphi$ on all of $\mathcal{O}$.

\end{proof}

\begin{proof}[Proof of Lemma~\ref{lem: Lemma 1.}]
By restricting $U$ to a small enough open set containing $K$, we may assume without loss of generality $f$ has no critical points in $U\setminus K$. Fix some $z_0\in U$. Our first goal is to show that there is a small neighborhood $V_0$ of $z_0$, and a sequence of injective analytic functions $\psi_n:V_0\to U$ with $\psi_n(z)\to z$ uniformly on $V_0$, and $f=g_n\circ\psi_n$ on $V_0$ for all sufficiently large $n$.

Suppose first that $f'(z_0)\neq0$. By rescaling $f$ and $p_n$ if necessary, we can assume that $f'(z_0)=1$. Choose some $r>0$ such that the closed ball $cl(B(z_0;r))$ is contained in $U$, and $\Re(f')>1/2$ on $cl(B(z_0;r))$. Then by uniform convergence, $\Re({g_n}')>1/2$ on $cl(B(z_0;r))$ for sufficiently large values of $n$. This implies that for $z,z'\in cl(B(z_0;r))$, $$\Re\left(\dfrac{g_n(z)-g_n(z')}{z-z'}\right)>\dfrac{1}{2},$$ so $g_n$ is injective with $|{g_n}'|\geq 1/2$ on $cl(B(z_0;r))$. It follows therefore that $g_n\left(B(z_0;r)\right)$ contains $B(g_n(z_0);r/2)$, so $g_n\left(B(z_0;r)\right)\supset B\left(f(z_0);r/3\right)$ for sufficiently large values of $n$ (again by the uniform convergence of $g_n\to f$), and there is a unique analytic inverse ${g_n}^{-1}:B\left(f(z_0);r/3\right)\to B(z_0;r)$ with $g_n\circ{g_n}^{-1}(z)=z$ (by the inverse function theorem). Choosing the open neighborhood $V_0$ of $z_0$ small enough that $f(V_0)\subset B(f(z_0);r/3)$, then defining $\psi_n:V_0\to U$ by $\psi_n={g_n}^{-1}\circ f$ satisfies the requirements.

Suppose now that $f'(z_0)=0$. Subtract a constant if necessary from $f$, and the same constant from each $g_n$, to ensure that $f(z)=(z-z_0)^mh(z)$ for some $m\geq2$ and for an analytic function $h:U\to\mathbb{C}$ with $g(z_0)\neq0$. By assumption, each $g_n=(z-z_0)^mh_n(z)$ for analytic functions $h_n:U\to\mathbb{C}$ with $h_n\to h$ uniformly on $U$. Then on a neighborhood of $z_0$, $h$ and each $h_n$ is nonzero for sufficiently large $n$. Hence, we can take $m^{\text{th}}$ roots to obtain analytic functions $\widetilde{f}(z)=(z-z_0)h^{1/m}$ and $\widetilde{g_n}(z)=(z-z_0)h_n(z)^{1/m}$. Moreover, provided that we take consistent $m^{\text{th}}$ roots, then $\widetilde{g_n}\to f$ uniformly on the neighborhood of $z_0$. Therefore by the first case above, there exists an open neighborhood $V_0$ of $z_0$ and analytic functions $\psi_n:V_0\to U$ with $\psi_n(z)\to z$ uniformly, with $\widetilde{f}=\widetilde{g_n}\circ\psi_n$ on $V_0$. Taking $m^{\text{th}}$ powers, we have $f=g_n\circ\psi_n$ on $V_0$.

By compactness of $K$ and the fact that the analytic functions $\psi_n$ exist locally as shown above, there is a finite open cover $\{B_1,\ldots,B_N\}$ of $K$ for which the $B_k$ are open balls in $U$, and sequences of analytic functions $\{\psi_{k,n}\}_{n=1}^\infty$ satisfying $g_n\circ\psi_{k,n}=f$ on $B_k$, and $\psi_{k,n}(z)\to z$ on uniformly $B_k$.

However, whenever $B_k$ and $B_l$ have non-empty intersection, since $f$ is non-constant, its derivative will be non-zero at some point $z_0\in B_k\cap B_l$, and without loss of generality, suppose that $f'(z_0)=1$. Then by the uniform convergence, there is an open neighborhood $\hat{B}$ of $z_0$ on which $\Re({g_n}')\geq 1/2$ for sufficiently large $n$, so that $g_n$ is injective on $\hat{B}$. Since $g_n\circ\psi_{k,n}=f=g_n\circ\psi_{l,n}$ on $\hat{B}$, and $g_n$ is injective on $\hat{B}$, it follows that $\psi_{k,n}=\psi_{l,n}$ on $\hat{B}$ (and thus on all of $B_k\cap B_l$). Thus setting $V=\ds\bigcup B_k$, we have have analytic functions $\psi_n:V\to U$ (setting $\psi_n=\psi_{k,n}$ on $B_k$), with $f=g_n\circ\psi_n$ on $V$, and $\psi_n(z)\to z$ uniformly.

It only remains to show that $\psi_n$ is injective on all of $V$. Let $\widehat{B_1},\ldots,\widehat{B_t}$ be open balls covering $K$, whose closures are contained in $V$. Let $n$ be chosen large enough so that $\psi_n$ is injective on each $\widehat{B_k}$, and set $\widehat{K}=\ds\bigcup\widehat{B_k}$. By compactness, there is an $\epsilon>0$ such that for each $z,w\in\widehat{K}$, if $0<|z-w|<\epsilon$, $z$ and $w$ lie in some common $\widehat{B_k}$, so that $\psi_n(z)\neq\psi_n(w)$. Additionally, since $\psi_n(z)\to z$ uniformly on $V$, we may also require that $|\psi_n(z)-z|<\epsilon/2$ on $V$. Therefore, for any distinct $z,w\in V$, if $|z-w|<\epsilon$, $\psi_n(z)\neq \psi(w)$, and if $|z-w|\geq\epsilon$, $|\psi_n(z)-\psi_n(w)|\geq\epsilon-|z-w|>0$ (by the reverse triangle inequality). Thus $\psi_n$ is injective of $V$.

\end{proof}

\begin{proof}[Proof of Lemma~\ref{lem: Lemma 2.}]
To begin, let an open, bounded set $U$ be chosen which contains $K$, and such that $f$ is analytic on the closure of $U$. By Runge's theorem, we may find a sequence of rational functions $\left\{\widehat{q_n}\right\}$ which converge uniformly to $f$ on $U$, and such that each $\widehat{q_n}$ i) is analytic on $U$ and ii) has at most one pole in each component of $U^c$. Let $z_1,z_2,\ldots,z_M\in K$ be the critical points of $f$ in $K$, with multiplicities $m_1,m_2,\ldots,m_M\geq1$. Define $N=\ds\sum(m_k+1)$. By Lagrange interpolation, for each $n\in\mathbb{N}$, there is a unique polynomial $r_n$ of degree $N-1$ such that for each $k\in\{1,2,\ldots,M\}$ and each $j\in\{0,1,\ldots,m_k\}$, $\widehat{q_n}^{(j)}(z_k)-{r_n}^{(j)}(z_k)=f^{(j)}(z_k)$. We wish to show that $q_n=\widehat{q_n}-r_n\to f$ uniformly on $U$. Since $\widehat{q_n}\to f$ uniformly on $U$, it suffices to show that $r_n\to 0$ uniformly on $U$.

The coefficients of $r_n$ depend linearly on the $N$ quantities $\widehat{q_n}^{(j)}(z_k)$. These coefficients do not depend on $n$. Thus it suffices to show that each $\widehat{q_n}^{(j)}(z_k)$ approaches zero as $n\to\infty$. Fix some $k\in\{1,2,\ldots,M\}$. For $j=0$, observe that since $\widehat{q_n}\to f$ uniformly on $U$, ${r_n}^{(0)}(z_k)=f(z_k)-\widehat{q_n}(z_k)\to0$. Let $\gamma_k$ be a small circle around $z_k$, on which $f$ is analytic, and which does not enclose or contain any other $z_l$. For $j\in\{1,2,\ldots,m_k\}$, $$\widehat{q_n}^{(j)}(z_k)=\widehat{q_n}^{(j)}(z_k)-f^{(j)}(z_k)=\dfrac{j!}{2\pi i}\ds\oint_{\gamma_k}\dfrac{\widehat{q_n}(z)-f(z)}{(z-z_k)^{j+1}}dz.$$ Since $\widehat{q_n}\to f$ uniformly on $U$, this integral approaches $0$ as $n\to\infty$.

\end{proof}

\bibliographystyle{plain}
\bibliography{refs.bib}

\end{document}